\newcommand\B{{\mathcal B}}
\renewcommand\b{\beta}
\newcommand\C{{\mathcal C}}
\renewcommand\c{\gamma}
\renewcommand\d{\delta}
\newcommand\F{{\mathbb F}}
\newcommand\N{{\mathbb N}}
\renewcommand\P{{\mathcal P}}
\newcommand\Z{{\mathbb Z}}
\newcommand\abs[1]{\left|#1\right|}
\newcommand\at[1]{\mathop{@}{#1}}
\newcommand\aut{\operatorname{Aut}}
\newcommand\rist{\operatorname{Rist}}
\newcommand\set[1]{\left\{#1\right\}}
\newcommand\stab{\operatorname{Stab}}
\newcommand\sym{\operatorname{Sym}}
\newtheorem{theorem}{Theorem}[section]
\newtheorem{proposition}[theorem]{Proposition}
\newtheorem{corollary}[theorem]{Corollary}
\newtheorem{lemma}[theorem]{Lemma}
\theoremstyle{remark}
\newtheorem{remark}[theorem]{Remark}
\theoremstyle{definition}
\newtheorem{definition}[theorem]{Definition}
\numberwithin{equation}{section}
\title{The Twisted Twin of the Grigorchuk Group}
\author{Laurent Bartholdi}
\address{Mathematisches Institut\\
  Georg-August Universit\"at zu G\"ottingen\\
  Bunsenstra\ss e 3--5\\
  D-37073 G\"ottingen}
\email{laurent.bartholdi@gmail.com}
\author{Olivier Siegenthaler}
\address{Departement Mathematik\\
R\"amistrasse 101\\
CH-8092 Z\"urich}
\email{olivier.siegenthaler@math.ethz.ch}
\begin{document}
\bibliographystyle{alpha}
\begin{abstract}
  We study a twisted version of Grigorchuk's first group, and stress
  its similarities and differences to its model.

  In particular, we show that it admits a finite endomorphic
  presentation, has infinite-rank multiplier, and does not have the
  congruence property.
\end{abstract}
\maketitle
\section{Introduction}
The first Grigorchuk group $\Gamma$ appeared
in~\cite{grigorchuk:burnside} in the early 1980's, as an example of a
finitely generated, infinite, torsion group. It was soon shown to be
just-infinite, and to have intermediate word
growth~\cite{grigorchuk:growth}, answering Milnor's
question~\cite{milnor:5603}. To this date it remains the fundamental
example of such a group.

The usual description of $\Gamma$ is as a ``group generated by
automata''. As such, it acts on a binary rooted tree, and its elements
are described by finite-state transducer automata. One may close
$\Gamma$ using the topology of the automorphism group of the binary
rooted tree; Sunic and Grigorchuk asked in 2007, in Cardiff, whether
the closure $\overline\Gamma$ of $\Gamma$ contained other finite-state
transducer automata, and in particular other with only one non-trivial
cycle (so-called ``bounded automata'').

In answering that question, the second author discovered a ``twisted''
version $G$ of $\Gamma$, and the purpose of the present paper is to
study some of its algebraic properties. In summary, $G$ also acts on
the infinite binary tree, and shares many properties with $\Gamma$:
\begin{itemize}
\item $G$ is torsion, just-infinite, and has the same closure as
  $\Gamma$ in the group of automorphisms of the tree;
\item $G$ has a finite endomorphic presentation
  \[G=\langle a,\b,\c,\d\mid a^2,\b^2,\c^2,\d^2,\varphi^n(R)\text{ for all }n\ge0\rangle,\]
  where
  $R=\{[\d^a,\d],[\d,\c^a\b],[\d,(\c^a\b)^\c],[\c^a\b,\c\b^a]\}$
  and $\varphi$ is the endomorphism of the free group on $a,\b,\c,\d$ induced by
  \begin{align*}
    a&\mapsto \c^a,&\b&\mapsto \d,&\c&\mapsto \b^a,&\d&\mapsto \c,
  \end{align*}
  see Theorem~\ref{thm:presentation}. Moreover, its relators are
  independent in $H_2(G,\Z)$ which, qua $(\Z/2\Z)[\varphi]$-module, is
  freely generated by $R$, see Theorem~\ref{thm:schur};
\item $G$ has bounded width; more precisely, if $(\gamma_n)$ denote
  its lower central series, then $\gamma_n/\gamma_{n+1}$ is an
  elementary Abelian $2$-group of rank $2$ if $\frac34 2^i+1\le n\le
  2^i$ for some $i$, and of rank $4$ otherwise, see
  Corollary~\ref{cor:lcs};
\item $G$ and $\Gamma$ are not isomorphic; $\Gamma$ does not have the
  congruence property, and more precisely its congruence kernel is
  isomorphic to $(\Z/4\Z)[[\{0,1\}^\omega]]$ as a profinite
  $\overline{G}$-module.
\end{itemize}

This paper is related to~\cite{bartholdi-s-z:cspbg} which studies more
systematically the congruence problem for groups generated by automata.

Another purpose of this paper is to assemble, in a systematic manner,
the various tools and techniques used to study a new example of group
generated by automata. We believe that the very computational nature
of these groups lends itself to computer calculations, and we make
liberal use of GAP~\cite{gap4:manual} to complete several steps.

We use the Knuth-Bendix procedure implemented in the package
KBMAG~\cite{holt:kbmag} to improve a presentation of our group.  Many
proofs reduce to computations in finite permutation groups; they are
easily performed in GAP.  Some other proofs are more involved, often
relying on an induction process.  The induction basis boils down to
verifications in nilpotent quotients of L-presented groups and
happily, such quotients are now efficiently implemented in the package
NQL~\cite{hartung:nql}.

We insist however that these computations could very well have been
done, and written down, by hand, at the cost of trebling the length of
this paper; and that the statements of this paper are true and not
empirical facts.

The methods presented here can all be applied with only slight
modifications to the Grigorchuk group, or to other groups generated by
automata, yet to come under the spotlight. We therefore include the
GAP command sequences we have used in an appendix.

\section{Basic Definitions and Notation}
We introduce the necessary vocabulary describing groups acting on
rooted trees. The main notions are summarized in Table~\ref{table:1}.

We consider the alphabet $X=\{0,1\}$ and we identify the free monoid $X^*$ over $X$ with the vertices of an infinite binary tree.
We let $\aut X^*$ denote the automorphism group of the tree $X^*$.

Let $v$ be a vertex of $X^*$.  The \emph{stabilizer} of $v$ is the group
$\stab(v)$ of elements of $\aut X^*$ which fix $v$.  The \emph{$n$-th level stabilizer}, written $\stab(n)$, is the subgroup of $\aut X^*$
consisting of the elements that fix all the vertices of the $n$-th
level.  It is a normal subgroup of $\aut X^*$.
The quotient $\aut X^n=\aut X^*/\stab(n)$ is a finite $2$-group isomorphic to the $n$-th fold iterated wreath product of $\sym(X)$, a cyclic group of order $2$.
Moreover, the group $\aut X^*$ is the projective limit of the projective system
$\aut X^n$, and hence $\aut X^*$ is a profinite group.
A basis of neighbourhoods of the identity is given by the collection $\{\stab(n):n\ge0\}$.

For $n\ge0$, we define the map
\begin{align*}
\psi_n:\aut X^*&\to\aut X^*\wr\aut X^n\\
g&\mapsto(g\at{v}\mid v\in X^n)\sigma.
\end{align*}
We shall often write $\psi$ for $\psi_1$.
The above defines the symbol $g\at{v}$. Namely, $g\at{v}$ is the projection of $g$ onto the $v$-th coordinate under $\psi_n$, where $n$ is the length of $v$.
A subset $S\subseteq\aut X^*$ is
called \emph{self-similar} if $S\at{v}$ is contained in $S$ for all
$v\in X^*$.
For a vertex $v\in X^*$ and an automorphism $g\in\aut X^*$, we let $v*g$ denote the automorphism satisfying
\begin{align*}
\psi_{|v|}(v*g)=(1,\dots,1,g,1,\dots,1),
\end{align*}
so that $(v*g)\at v=g$.

Consider a group $G\le\aut X^*$.
The group $G$ is \emph{level-transitive} if
the action of $G$ is transitive on each level $X^n$.
The group $G$ is \emph{recurrent} if $\stab_G(v)\at{v}=G$ for all $v\in X^*$.
A \emph{branching subgroup} is a subgroup $K\le G$ so that $X^n*K=\prod_{v\in X^n}v*K$ is a subgroup of $K$ for all $n\ge 0$.
The group $G$ is \emph{weakly regular branch} if it has a non-trivial branching subgroup.
The group $G$ is \emph{regular branch} if it is weakly regular branch and $X^n*K$ has finite index in $G$ for all $n\ge0$.
In such a situation we say that $G$ is regular branch over $K$.

\begin{table}[h]
\begin{center}
\begin{tabular}[c]{|c|ll|p{80mm}|}
\hline
symbol&\multicolumn{2}{l|}{name}&definition\\
\hline
\rule{0mm}{2.5ex}
$X^*$ & \multicolumn{2}{l|}{infinite binary tree} & \\
$X^n$ & \multicolumn{2}{l|}{$n$-th level of $X^*$} & \\
$\psi_n$ & \multicolumn{2}{l|}{wreath decomposition} & canonical map $\aut X^*\to\aut X^*\wr\aut X^n$\\
$g\at{v}$ & \multicolumn{2}{l|}{state of $g$ at $v$} & projection of $g$ on the $v$-th coordinate in $\psi_n(\aut X^*)$\\
$v*g$ & \multicolumn{2}{l|}{$v$-translate of $g$} & element of $\rist(v)$ with $(v*g)\at{v}=g$\\
$K$ & \multicolumn{2}{l|}{branching subgroup} & subgroup with $v*K\leq K$ for all $v\in X^*$\\
$X^n*K$ & & & product of $v*K$ for all $v\in X^n$\\
\hline
\rule{0mm}{2.5ex}
$\stab_G(v)$ & \multicolumn{2}{l|}{stabilizer of $v$} & subgroup of $G$ consisting of the elements which fix $v$\\
$\stab_G(n)$ & \multicolumn{2}{l|}{$n$-th level stabilizer} & intersection of $\stab_G(v)$ with $v$ ranging over $X^n$\\
\hline
\multicolumn{2}{|l|}{\rule{0mm}{2.5ex}level-transitive} & \multicolumn{2}{l|}{the action of $G$ is transitive on each level}\\
\multicolumn{2}{|l|}{self-similar} & \multicolumn{2}{l|}{the image of $G\to\aut X^*\wr\sym(X)$ is contained in $G\wr\sym(X)$}\\
\multicolumn{2}{|l|}{recurrent} & \multicolumn{2}{l|}{$\stab_G(v)\at{v}=G$ for all $v\in X^*$}\\
\multicolumn{2}{|l|}{weakly regular branch} & \multicolumn{2}{p{100mm}|}{$G$ contains a non-trivial branching subgroup}\\
\multicolumn{2}{|l|}{regular branch} & \multicolumn{2}{p{100mm}|}{weakly regular branch, and $X^n*K$ has finite index in $G$ for all $n\ge0$}\\
\hline
\end{tabular}
\end{center}
\caption{Symbols, subgroups, and main properties of groups acting on rooted trees.}
\label{table:1}
\end{table}

\section{Definition of the Group}

We define recursively the following elements of $\aut X^*$:
\begin{align*}
a&=\sigma,& \psi(\b)&=(\c,a),& \psi(\c)&=(a,\d),& \psi(\d)&=(1,\b),
\end{align*}
where $\sigma$ is the automorphism which permutes the two maximal subtrees.
We let $G$ be the group generated by $a,\b,\c,\d$.
We shall call this group the \emph{twisted twin} of the Grigorchuk group.
The motivation for this terminology is that the automaton defining $G$ is very similar to the one defining the Grigorchuk group, and moreover both groups have identical closure in $\aut X^*$ (see~\cite{siegenthaler:phd}).
However, $G$ is not isomorphic to the Grigorchuk group as we shall see.

Let $N$ be the generating set $\{1,a,\b,\c,\d\}$ of $G$.
Clearly $N$ is self-similar, and therefore so is $G$.
Next, $G$ acts transitively on $X$, and it is recurrent on the first level: $\stab_G(x)\at{x}=G$ for all $x\in X$.
Therefore $G$ is level-transitive and recurrent.
In particular, this implies that $G$ is infinite.

Define the group $K\leq G$ by
\begin{align*}
K=\langle[a,\b],[\b,\c],[\b,\d],[\c,\d],\b\c\d\rangle^G
\end{align*}
where the exponent stands for normal closure in $G$.

\begin{proposition}\label{prop:get branch}
The group $G$ is regular branch over $K$.
Moreover, $K$ contains $\stab_G(3)$.
\end{proposition}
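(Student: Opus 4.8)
The plan is to verify the two defining conditions of a regular branch group: that $K$ is a branching subgroup, i.e. $\psi(K)\supseteq K\times K$, and that $K$ has finite index in $G$. The finite index will follow from the stronger assertion $\stab_G(3)\le K$, since $G/\stab_G(3)$ embeds into the finite group $\aut X^3$. I would begin with the branching property. Computing the wreath images of the defining generators of $K$ one finds
\begin{align*}
\psi([\b,\d])&=(1,[a,\b]),&\psi([\c,\d])&=(1,[\d,\b]),&\psi([a,\b]\,\b\c\d)&=(1,\c\d\b).
\end{align*}
For the two remaining generators I would introduce the auxiliary element $\b^a=a\b a$, whose image is $\psi(\b^a)=(a,\c)$; combined with $\psi(\c)=(a,\d)$ and $\psi(\d)=(1,\b)$ this gives $\psi([\b^a,\c])=(1,[\c,\d])$ and $\psi([\b^a,\d])=(1,[\c,\b])$. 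Each of these preimages lies in $K$: since $\b^a=\b\,[\b,a]$ with $[\b,a]=[a,\b]^{-1}\in K$, one has $[\b^a,\c]\equiv[\b,\c]$ and $[\b^a,\d]\equiv[\b,\d]$ modulo $K$, both generators of $K$; likewise $\c\d\b=(\b\c\d)^\b$ is a conjugate of a generator. Thus for each of the five normal generators $s$ of $K$ I have produced an element of $K$ with image $(1,s^{\pm1})$, so $(1,s)\in\psi(K)$.

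Next I would propagate this to all of $1\times K$. Since $G$ is recurrent, every $g\in G$ arises as $h\at 1$ for some $h\in\stab_G(1)$, that is $\psi(h)=(h_0,g)$; conjugating $(1,s)\in\psi(K)$ by $\psi(h)$ and using that $K$ is normal in $G$ yields $(1,s^g)\in\psi(K)$. Hence $1\times\langle s\rangle^G\le\psi(K)$ for every generator $s$, so $1\times K\le\psi(K)$. Conjugating by $\psi(a)=\sigma$, which interchanges the two coordinates, then gives $K\times1\le\psi(K)$, and therefore $\psi(K)\supseteq K\times K$.

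For the index statement I would show $\stab_G(3)\le K$. One first checks that $G/K$ is a finite $2$-group: modulo $K$ the generators become commuting involutions except possibly for the pair $(a,\c)$, and $\b\c\d\equiv1$, so $G/K$ is an explicit finite quotient. Granting its order, the inclusion $\stab_G(3)\le K$ is equivalent to the equality $[G:K]=[G:\stab_G(3)K]$, where the right-hand index is computed inside the finite group $\aut X^3$; this is the finite verification carried out by hand or in GAP. Once $[G:K]<\infty$ is known, the branching property $\psi(K)\supseteq K\times K$ propagates by induction to give $[G:X^n*K]<\infty$ for all $n$, which completes the proof that $G$ is regular branch over $K$.

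I expect the main obstacle to be the inclusion $\stab_G(3)\le K$ rather than the branching property. The latter is a direct, if fortunate, section computation; the former requires pinning down the finite quotient $G/K$ precisely and verifying that the bound occurs at \emph{exactly} the third level, and not merely at some unspecified level $n$. This is the essentially computational step where the structure of $G/K$ must be controlled carefully enough to justify the sharp value $3$.
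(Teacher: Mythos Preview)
Your approach is the same as the paper's: exhibit explicit elements of $K$ whose $\psi$-image is $(1,s)$ for each normal generator $s$ of $K$, then use recurrence and normality to get $1\times K\le\psi(K)$; for the index, bound $[G:K]$ from above abstractly and match it against the image in $\aut X^3$. Your section computations are correct (and your choice $[\b^a,\c]$, $[\b^a,\d]$ is a nice variant of the paper's elements).

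There is, however, a genuine gap in your finiteness argument for $G/K$. You write that ``modulo $K$ the generators become commuting involutions except possibly for the pair $(a,\c)$''. This is not right: neither $[a,\c]$ nor $[a,\d]$ lies among the normal generators of $K$, so a priori \emph{two} pairs fail to commute. From $\b\c\d\in K$ and $[a,\b]\in K$ you only deduce that $a$ commutes with the product $\c\d$ modulo $K$, not with $\c$ or $\d$ separately. Even granting your description, a group generated by involutions with one non-commuting pair can be infinite dihedral, so ``explicit finite quotient'' does not follow from what you wrote. Some extra relation coming from the tree action is needed to force finiteness.

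The paper closes this gap by interposing $B=\langle\b,\c\d\rangle^G$: then $G/B$ is generated by the images of $a$ and $\d$, and one checks directly on the tree that $(a\d)^4=1$ in $G$, so $|G/B|\le|\langle a,\d\rangle|=8$; and $B/K$ is generated by the image of $\b$, hence $|B/K|\le2$. This gives $[G:K]\le16$, which is then matched by the index-$16$ computation of the image of $K$ in $G/\stab_G(3)$, yielding simultaneously $[G:K]=16$ and $\stab_G(3)\le K$. You correctly identified this as the delicate step; what is missing is precisely a relation like $(a\d)^4=1$ to cap the order of $G/K$.
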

\begin{proof}
Write $B=\langle\b,\c\d\rangle^G$.
The group $G/B$ is generated by the images of $a$ and $\d$, and therefore has order at most $\abs{\langle a,\d\rangle}=8$.

The group $B/K$ is generated by the image of $\b$, hence it is of order at most $2$.
Therefore $K$ is of index at most $16$ in $G$.
A direct computation shows that $K/\stab_K(3)$ has index $16$ in $G/\stab_G(3)$.
This proves that $K$ has index $2$ in $B$, that $B$ has index $8$ in $G$, and that $K$ contains $\stab_G(3)$.

We now prove that $X*K$ is contained in $K$.
For this we simply show that $1*g$ is contained in $K$ for all $g\in\set{[a,\b],[\b,\c],[\b,\d],[\c,\d],\b\c\d}$:
\begin{align*}
[\b,\d]&=1*[a,\b],&[\d,\b]\cdot[\d,[a,\b]]^\b&=1*[\b,\c],\\
[\d,\c]&=1*[\b,\d],&[[a,\b],\c]^\b\cdot[\b,\c]&=1*[\c,\d],\\
([a,\b]\b\c\d)^\d=\d\b^a\c&=1*\b\c\d.
\end{align*}
The last thing to prove is that $X^n*K$ has finite index in $K$.
Since $G$ is self-similar, $\stab_G(n+3)$ is a subgroup of $X^n*\stab_G(3)$.
Now $X^n*K$ contains $X^n*\stab_G(3)$, and therefore it also contains $\stab_G(n+3)$, which has finite index in $G$.
\end{proof}
\begin{remark}
Further computations show that the Abelianization of $K$ is $C_2\times(C_4)^3\times C_8$ and that $K/(X*K)$ is a cyclic group of order $4$.
\end{remark}

\section{A Presentation}
Recall that an (invariant) L-presentation is an expression of the form
\begin{align*}
\langle X\mid\Phi\mid R\rangle
\end{align*}
where $X$ is a set, $\Phi$ is a collection of endomorphisms of the free group $F_X$ with basis $X$, and $R$ is a set of elements of $F_X$.
The group defined by this L-presentation is then $\langle X\mid\{\phi(r)\}\rangle$ where $\phi$ ranges over the monoid generated by $\Phi$, and $r$ ranges over $R$.
Obviously, each endomorphism in $\Phi$ induces an endomorphism of the L-presented group.

L-presentations were introduced in~\cite{bartholdi:lpres}; a
fundamental example was Lysionok's presentation of the Grigorchuk
group~\cite{lysionok:pres}.

The main result of this section is the following
\begin{theorem}\label{thm:presentation}
The group $G$ admits the L-presentation
\begin{equation*}
G=\langle
a,\b,\c,\d\mid\tilde\varphi\mid a^2,\b^2,\c^2,\d^2,[\d^a,\d],[\d,\c^a\b],[\d,(\c^a\b)^\c],[\c^a\b,\c\b^a]
\rangle,
\end{equation*}
where $\tilde\varphi$ is the endomorphism of the free group on $a,\b,\c,\d$ induced by
\begin{align*}
a&\mapsto \c^a,&\b&\mapsto \d,&\c&\mapsto \b^a,&\d&\mapsto \c.
\end{align*}
\end{theorem}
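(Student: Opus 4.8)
The plan is to identify $G$ with $F/\Rfrak$, where $F$ is the free group on $\{a,\b,\c,\d\}$ and $\Rfrak=\ker(\epsilon)$ for the canonical surjection $\epsilon\colon F\to G$, and then to prove that $\Rfrak$ equals the normal closure $\Rfrak_0=\langle\tilde\varphi^n(r):n\ge0,\ r\in R\rangle^F$ of the iterated $\tilde\varphi$-images of the listed relators $R$. One inclusion is routine; the other rests on a contraction argument along the wreath recursion.

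\textbf{Relations hold.} First I would check $\Rfrak_0\subseteq\Rfrak$. The relators $a^2,\b^2,\c^2,\d^2$ and the four commutators map to the identity in $G$; this is a finite verification, e.g. in $G/\stab_G(3)$, which suffices because Proposition~\ref{prop:get branch} gives $\stab_G(3)\le K$ and pins down the action below level~$3$. It then remains to see $\tilde\varphi(\Rfrak)\subseteq\Rfrak$, i.e. that $\tilde\varphi$ descends to an endomorphism of $G$; this follows from the section identity below. Consequently $\epsilon$ factors through $\hat G=F/\Rfrak_0$, giving a surjection $\pi\colon\hat G\twoheadrightarrow G$, and the task reduces to showing $\pi$ is injective.

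\textbf{The section identity.} The choice of $\tilde\varphi$ encodes the recurrence of $G$: reading off the states of $a,\b,\c,\d$ from their definitions, one verifies $\psi(\epsilon\tilde\varphi(w))=(\ast,\epsilon(w))$ for every $w\in F$, so that $\tilde\varphi$ lifts the embedding $G\hookrightarrow\stab_G(1)$ placing $g$ into the second coordinate. Since $a^2\in R$, the involution $a$ lets me define the index-$2$ subgroup $S\le\hat G$ lying over $\stab_G(1)$, together with a homomorphism $\Psi\colon S\to\hat G\times\hat G$ lifting $\psi$. Compatibility of $\Psi$ with $\tilde\varphi$—namely that the second coordinate of $\Psi(\tilde\varphi(w))$ equals the image of $w$—is the algebraic heart of the construction, and it also yields $\tilde\varphi(\Rfrak)\subseteq\Rfrak$ used above.

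\textbf{Injectivity by contraction.} Take $w\in\Rfrak$; I would prove $w\in\Rfrak_0$ by induction on word length. Using $a^2,\b^2,\c^2,\d^2$ together with the finitely many relators of $R$, reduce $w$ modulo $\Rfrak_0$ to an element of $S$. Applying $\Psi$ yields $\Psi(w)=(w_0,w_1)$ with $\epsilon(w_0)=\epsilon(w_1)=1$, so both coordinates lie in $\Rfrak$; a length-reduction estimate drawn from the recursion makes $w_0,w_1$ strictly shorter than $w$ once $w$ is long enough, so the inductive hypothesis gives $w_0,w_1\in\Rfrak_0$. Finally the section identity lets me rewrite $w$: the product $w\cdot\tilde\varphi(w_1)^{-1}$ has trivial second coordinate, and after a symmetric step on the first coordinate $w$ is expressed as a product of $\tilde\varphi$-images of $w_0,w_1$ with factors already in $\Rfrak_0$, whence $w\in\Rfrak_0$.

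\textbf{Main obstacle.} The crux is to certify that the \emph{particular} finite set $R$ is large enough: that modulo $\Rfrak_0$ every kernel element can be driven into $S$ and then reassembled from its coordinates entirely within $\Rfrak_0$, and that the coordinate lengths genuinely contract so the induction closes. This reduces to a finite check in a nilpotent (or finite level-$3$) quotient—matching the short relations of $\hat G$ with those of $G/\stab_G(3)$—which is exactly where the computational tools (NQL, GAP, KBMAG) are deployed.
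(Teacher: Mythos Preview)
Your outline has the right contracting/self-similar flavour, but it diverges from the paper's route and leaves the decisive step unjustified.

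The paper does not run a length induction in $\hat G$. It works in the free product $F=\langle a,\b,\c,\d\mid a^2,\b^2,\c^2,\d^2\rangle$ (so that the lifted recursion $\bar\psi:F\to F\wr C_2$ is itself contracting with nucleus $\{1,a,\b,\c,\d\}$), sets $R_0=1$, $R_{n+1}=\bar\psi^{-1}(R_n\times R_n)$, and identifies $G=F/\bigcup_n R_n$. It then computes $R_1=\ker\bar\psi$ \emph{explicitly} by Reidemeister--Schreier, obtaining the infinite family
\[
[\d^a,\d^w],\quad[\d^a,(\c\b^a)^w],\quad[(\c\b^a)^a,(\c\b^a)^w],\qquad w\in\langle\c,\c^a\rangle;
\]
uses $\bar\psi(\bar\varphi(r))=(r',r)$ with $r'\in\langle a,\d\rangle\cap\Rfrak\subseteq R_1$ (since $(a\d)^4\in R_1$) to get $R_n=\langle\bigcup_{i<n}\bar\varphi^i(R_1)\rangle^F$; cuts the infinite family down to $24$ relators via $\tilde\varphi([\d^a,\d])\equiv(\c\c^a)^4$; and finally runs a Knuth--Bendix computation to check that the four listed commutators together with a few $\tilde\varphi$-iterates already imply those $24$.

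The genuine gap in your argument is the reconstruction step. Once the induction gives $w_0,w_1\in\Rfrak_0$, cancelling coordinates with $\tilde\varphi(w_1)$ and $a\tilde\varphi(\cdot)a^{-1}$ leaves you with an element whose $\bar\psi$-image lies in $\Rfrak_0\times\Rfrak_0$, i.e.\ an element of $\ker\Psi$. Concluding it is trivial in $\hat G$ is exactly the assertion $\bar\psi^{-1}(\Rfrak_0\times\Rfrak_0)\cap S_F\subseteq\Rfrak_0$, and in particular requires $R_1\subseteq\Rfrak_0$. You give no mechanism for verifying this without knowing what $R_1$ is; that is precisely where the paper's Reidemeister--Schreier computation and the KBMAG reduction carry the weight. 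Describing the residual obligation as ``a finite check in a nilpotent (or finite level-$3$) quotient---matching the short relations of $\hat G$ with those of $G/\stab_G(3)$'' misidentifies it: $R_1$ is an infinitely (normally) generated object, its reduction to finitely many relators uses the specific consequence $(\c\c^a)^4=1$ of the iterated relators, and the final equality of presentations is established by a rewriting-system computation in an infinite group, not by comparing finite quotients.
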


Let $\varphi$ be the following map, defined on the generators of $G$:
\begin{align*}
a&\mapsto \c^a=\psi^{-1}(\d,a),&\b&\mapsto \d=\psi^{-1}(1,\b),\\
\c&\mapsto \b^a=\psi^{-1}(a,\c),&\d&\mapsto \c=\psi^{-1}(a,\d).
\end{align*}
\begin{proposition}\label{prop:endo}
The map $\varphi$ extends to an endomorphism of $G$.
\end{proposition}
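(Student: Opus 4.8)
The plan is to show that the evident lift of $\varphi$ to the free group factors through $G$, exploiting that $\varphi$ puts each generator back into the second coordinate. Let $F$ be the free group on $a,\b,\c,\d$, let $\pi\colon F\to G$ be the canonical epimorphism, and let $\hat\varphi\colon F\to G$ be the homomorphism extending $\varphi$; this exists and lands in $G$ precisely because each of $\c^a,\d,\b^a,\c$ lies in $G$. Proving the proposition then amounts to checking the inclusion $\ker\pi\subseteq\ker\hat\varphi$.

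First I would record the first-level data. Straight from the definition of $\varphi$, each $\hat\varphi(s)$ with $s\in\{a,\b,\c,\d\}$ lies in $\stab_G(1)$, and
\[\psi(\hat\varphi(a))=(\d,a),\quad \psi(\hat\varphi(\b))=(1,\b),\quad \psi(\hat\varphi(\c))=(a,\c),\quad \psi(\hat\varphi(\d))=(a,\d).\]
The crucial point is that the state at the vertex $1$ of $\hat\varphi(s)$ is $s$ itself. Since the coordinate projections $g\mapsto g\at 0$ and $g\mapsto g\at 1$ are homomorphisms on $\stab_G(1)$, and $\hat\varphi(w)\in\stab_G(1)$ for every $w\in F$, one obtains for all $w\in F$ that
\[\psi(\hat\varphi(w))=(\theta(w),\pi(w)),\]
where $\theta\colon F\to G$ is the homomorphism determined by $a\mapsto\d$, $\b\mapsto1$, $\c\mapsto a$, $\d\mapsto a$ (using self-similarity to see $\theta(w)=\hat\varphi(w)\at 0\in G$). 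Consequently, if $\pi(w)=1$ then $\hat\varphi(w)=0*\theta(w)$, so it remains only to show that $\theta$ also kills $\ker\pi$, i.e.\ that $\theta$ factors through $\pi$.

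For this last point I would route $\theta$ through a finite quotient of $G$. Writing $B=\langle\b,\c\d\rangle^G$ as in Proposition~\ref{prop:get branch}, the index $[G:B]=8$ established there shows that $G/B$ is generated by the two distinct nontrivial involutions $\bar a,\bar\d$, hence is dihedral of order $8$ and isomorphic to $\langle a,\d\rangle$. The assignment $\bar a\mapsto\d$, $\bar\d\mapsto a$ extends to an isomorphism $\alpha\colon G/B\to\langle a,\d\rangle$, and composing $\pi$ with the quotient $G\to G/B$ and then $\alpha$ reproduces $\theta$ on generators: here $\b,\c\d\in B$ give $\bar\c=\bar\d$, matching $\theta(\c)=\theta(\d)=a$ and $\theta(\b)=1$. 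Thus $\theta$ factors through $\pi$, so $\pi(w)=1$ forces $\theta(w)=1$; combined with the previous paragraph this yields $\hat\varphi(w)=1$, and therefore $\varphi$ descends to an endomorphism of $G$.

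The only genuine content — and the main obstacle — is this last step of controlling the first coordinate $\theta$; everything else is bookkeeping forced by the identity $\hat\varphi(s)\at1=s$. I expect the verification that $G/B$ is dihedral of order $8$, which turns the formula for $\theta$ on generators into an honest quotient map, to be exactly where the structural conclusion of Proposition~\ref{prop:get branch} is indispensable.
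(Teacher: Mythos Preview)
Your proof is correct and follows the same reduction as the paper: both observe that $\psi(\hat\varphi(w))=(\theta(w),\pi(w))$ for all $w\in F$, so that the problem reduces to showing the first-coordinate map $\theta$ (sending $a\mapsto\d$, $\b\mapsto1$, $\c\mapsto a$, $\d\mapsto a$) factors through $G$. The only difference is in how this last step is verified. The paper does it computationally, noting that $\langle a,\d\rangle$ has order $8$ and checking directly that $\theta$ factors through the finite quotient $G/\stab_G(3)$. You instead recycle the index computation from Proposition~\ref{prop:get branch}: since $[G:B]=8$ and $G/B$ is generated by the images of $a$ and $\d$, it is dihedral of order $8$, and $\theta$ is exactly the quotient $G\to G/B$ followed by the generator-swapping isomorphism $G/B\cong\langle a,\d\rangle$. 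Your route is a touch more structural---it trades a finite verification for an appeal to already-established structure---while the paper's route keeps the proof independent of Proposition~\ref{prop:get branch}; either way the content is the same.
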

\begin{proof}
From the above it is enough to prove that the map defined by
\begin{align*}
a&\mapsto\d,&\b&\mapsto1,&\c&\mapsto a,&\d&\mapsto a
\end{align*}
extends to a homomorphism $G\to G$.
The group generated by $a$ and $\d$ is a dihedral group of order $8$, and one can check that the map above factors through $G\to G/\stab_G(3)$.
\end{proof}
It is immediate that the restriction of $\varphi$ to $K$ sends $x$ to $\psi^{-1}(1,x)$.
We shall now prove Theorem~\ref{thm:presentation} and then some results which follow from this theorem.

\begin{definition}\label{def:contraction}
A self-similar group $G\leq\aut X^*$ is \emph{contracting} if there is a finite set $N\subseteq G$ such that for every $g\in G$ there is an integer $n$ such that $g\at{w}\in N$ for all $w\in X^*$ of length at least $n$.
The minimal set $N$ with this property is the \emph{nucleus} of $G$.
\end{definition}
\begin{lemma}\label{lem:get-contract}
The group $G$ is contracting with nucleus $N=\{1,a,\b,\c,\d\}$.
\end{lemma}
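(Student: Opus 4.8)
The plan is to exploit the self-similarity of $N$, already noted above, together with the observation that passing to states never increases a suitable length and strictly contracts it once we pass to products. For $g\in G$ let $\norm{g}$ denote the least number of elements of $N$ whose product equals $g$; this is finite because $N$ generates $G$, and $\norm{g}\le1$ exactly when $g\in N$. Since $N$ is self-similar, every state of an element of $N$ lies in $N$, so the formula $(gh)\at{v}=g\at{v}\cdot h\at{v^g}$ gives $\norm{g\at{v}}\le\norm{g}$ for every $v$: writing $g=x_1\cdots x_m$ with $x_i\in N$ and $m=\norm{g}$, the state $g\at{v}=x_1\at{v}\,x_2\at{v^{x_1}}\cdots x_m\at{v^{x_1\cdots x_{m-1}}}$ is a product of $m$ elements of $N$.

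The key step is to show that the states of \emph{products of two} nucleus elements fall back into $N$ after boundedly many levels: I would verify by a finite computation that there is an integer $n$ with $(xy)\at{v}\in N$ for all $x,y\in N$ and all $v$ of length $n$. There are only finitely many products $xy$, each of length at most $2$, and their states again have length at most $2$, so one follows the finitely many length-$2$ elements down the tree until they reach $N$. For instance $(a\b)\at{0}=a$ and $(a\b)\at{1}=\c$, so $a\b$ resolves at level $1$; the element $\b\c$ has states $\c a,a\d$ at level $1$, and $(\c a)\at{\cdot},(a\d)\at{\cdot}\in N$ at level $2$; a slowest case such as $\d\c\mapsto\b\d\mapsto a\b$ resolves at level $3$. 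Thus some explicit small $n$ works, and confirming that no length-$2$ state cycles forever without meeting $N$ is precisely this finite verification (easily done by hand or in GAP).

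Granting this, contraction follows by a halving argument. Given $g$ with $\norm{g}=m$, group a product $g=x_1\cdots x_m$ into $\ceil{m/2}$ consecutive pairs $y_j$ (the last factor standing alone if $m$ is odd), each $y_j$ being a product of at most two elements of $N$. For $\abs{v}=n$ the identity $g\at{v}=\prod_j y_j\at{v^{y_1\cdots y_{j-1}}}$ exhibits $g\at{v}$ as a product of $\ceil{m/2}$ elements of $N$ by the previous step, so $\norm{g\at{v}}\le\ceil{m/2}$. Since $(g\at{v})\at{v'}=g\at{vv'}$, iterating over blocks of $n$ levels replaces $m$ by $\ceil{m/2}$ each time, and after $\ceil{\log_2 m}$ blocks one reaches $\norm{g\at{w}}\le1$, that is $g\at{w}\in N$ for all $w$ of length at least $n\ceil{\log_2\norm{g}}$. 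Hence $G$ is contracting with nucleus contained in $N$.

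It remains to see that the nucleus is not smaller, and this is where the only genuine subtlety beyond bookkeeping lies. The minimal nucleus must contain every element occurring as a state $g\at{v}$ for arbitrarily deep $v$, and the relations $\b\at{0}=\c$, $\c\at{1}=\d$, $\d\at{1}=\b$, $\b\at{1}=a$, $\d\at{0}=1$ show that each of $\b,\c,\d,a,1$ recurs as a state of $\b$ at infinitely many levels; hence $N$ is contained in the minimal nucleus, and equality holds. The main obstacle is thus the finite verification in the second step: one must confirm that every length-$2$ product of nucleus elements drives all of its states into $N$ within a uniform number of levels, equivalently that the induced map on length-$2$ elements has no cycle avoiding $N$. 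This is routine but must be carried out explicitly, and it fixes the constant $n$ governing the rate of contraction.
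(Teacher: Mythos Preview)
Your proof is correct and follows essentially the same approach as the paper's: both observe that $N$ generates $G$, verify by a finite check that every product in $N^2$ has all states in $N$ after at most three levels, and then reduce an arbitrary element by halving the word length at each block of levels. The paper states this in two sentences and defers the details to Nekrashevych's book, whereas you spell out the halving argument and the minimality check explicitly; your constant $n=3$ matches the paper's.
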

\begin{proof}
There are several ways of proving that $G$ is contracting.
The most direct one is to notice that $N$ generates $G$; and that, if
$g$ is in $N^2$, then $g\at{w}$ is in $N$ for all $w$ of length at
least $3$.

It is then easy to prove that $N$ is the nucleus of $G$, by induction on the length of the elements (see~\cite{nekrashevych:ssg} where this is done in detail, and in greater generality).
\end{proof}

We recall the general strategy in obtaining presentations by generators
and relations for self-similar groups; for more details
see~\cite{bartholdi:lpres} or \cite{sidki:pres}.

The only relations of length $\le3$ among elements of $N$ are
$a^2=\b^2=\c^2=\d^2=1$.  We thus consider the group $F=\langle
a,\b,\c,\d\mid a^2,\b^2,\c^2,\d^2\rangle$. The decomposition map
$\psi:G\to G\wr\sym(X)$ restricts to a map $N\to N^X\times\sym(X)$,
which induces a homomorphism $\bar\psi:F\to F\wr\sym(X)$. Thanks to
the relations of length $\le3$ we took in $F$, the recursion
$\bar\psi$ is also contracting, with nucleus $\{1,a,\b,\c,\d\}$; see
again~\cite{nekrashevych:ssg} for details. Set $R_0=1\triangleleft F$
and $R_{n+1}=\psi^{-1}(R_n^X)$ for all $n\ge0$.
\begin{lemma}
$G=F/\bigcup_{n\ge0}R_n$.
\end{lemma}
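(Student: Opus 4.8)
The plan is to analyse the canonical surjection $\pi\colon F\to G$ that sends each generator of $F$ to the element of $G$ bearing the same name; this is well defined precisely because $a,\b,\c,\d$ have order $2$ in $G$, these being the only relations of length $\le3$. The whole argument turns on the compatibility $(\pi\wr\id)\circ\bar\psi=\psi\circ\pi$, which holds because $\bar\psi$ was defined exactly as the lift of the recursion $\psi$ to $F$; I would note at the outset that $\psi\colon G\to G\wr\sym(X)$ is injective, being the restriction of the faithful wreath decomposition of $\aut X^*$. Granting this, the inclusion $\bigcup_{n\ge0}R_n\subseteq\ker\pi$ follows by induction on $n$: the base case $R_0=1$ is trivial, and if $R_n\subseteq\ker\pi$ and $g\in R_{n+1}$, then $\bar\psi(g)=(g_0,g_1)$ with trivial permutation part and $g_0,g_1\in R_n$, so $\psi(\pi(g))=(\pi\wr\id)\bar\psi(g)=(\pi(g_0),\pi(g_1))=(1,1)$, whence $\pi(g)=1$ by injectivity of $\psi$.

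For the reverse inclusion $\ker\pi\subseteq\bigcup_{n\ge0}R_n$ I would use contraction. Fix $g\in\ker\pi$. Iterating the compatibility gives $\pi(g\at w)=\pi(g)\at w=1$ for every $w\in X^*$, so every state of $g$ again lies in $\ker\pi$; moreover the level-$n$ permutation recorded by the $n$-fold iterate $\bar\psi_n(g)$ coincides with that of $\psi_n(\pi(g))=\psi_n(1)$, hence is trivial for all $n$. Since $\bar\psi$ is contracting on $F$ with nucleus $\set{1,a,\b,\c,\d}$, there is an $n$ with $g\at w\in\set{1,a,\b,\c,\d}$ for all $w$ of length $\ge n$. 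But none of $a,\b,\c,\d$ lies in $\ker\pi$, since each acts non-trivially on the tree, so $\set{1,a,\b,\c,\d}\cap\ker\pi=\set1$. Therefore $g\at v=1$ in $F$ for every $v\in X^n$, and together with the vanishing of the level-$n$ permutation this says exactly that $\bar\psi_n(g)$ is trivial. Unwinding the recursion $R_{k+1}=\bar\psi^{-1}(R_k^X)$ one checks $R_n=\ker\bar\psi_n$, so $g\in R_n$, closing both inclusions and yielding $G=F/\bigcup_{n\ge0}R_n$.

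The step that deserves genuine care — and which I would isolate as the crux — is the identification $R_n=\ker\bar\psi_n$ together with the bookkeeping of permutation parts. These hold by the way $\bar\psi$ is set up, but they must be read against the convention that the permutations at \emph{all} intermediate levels, not merely the states at level $n$, are demanded to be trivial; this is exactly what the definition $R_{n+1}=\bar\psi^{-1}(R_n^X)$ builds in, by requiring both $g_0,g_1\in R_n$ and a trivial level-$1$ permutation at each stage. The remaining subtlety is purely the confirmation that $a,\b,\c,\d$ are non-trivial in $G$, for which it suffices to observe that $a=\sigma$ swaps the two subtrees while $\psi(\b),\psi(\c),\psi(\d)$ each carry a non-trivial entry, so that no generator collapses under $\pi$.
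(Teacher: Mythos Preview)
Your argument is correct and follows essentially the same route as the paper's proof: both directions rest on the contraction of $\bar\psi$ on $F$ with nucleus $\{1,a,\b,\c,\d\}$, the forward inclusion by unwinding $R_n$ through $\psi$, and the reverse by observing that a trivial element of $G$ has all sufficiently deep states in $N\cap\ker\pi=\{1\}$. Your write-up simply makes explicit several points the paper leaves tacit --- the compatibility $(\pi\wr\id)\circ\bar\psi=\psi\circ\pi$, the identification $R_n=\ker\bar\psi_n$, and the vanishing of intermediate permutations --- but there is no difference in strategy.
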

\begin{proof}
  By our choice of relations in $F$, the decomposition $\bar\psi$ is
  contracting on $F$, with nucleus $\{1,a,\b,\c,\d\}$. Given $w\in F$: if
  $w$ belongs to $R_n$ for some $n$, then it is clear that $w$ is
  trivial in $G$. Conversely, if $w$ is trivial in $G$,
  there exists $n$ such that all $w$'s level-$n$ states belong to $N$
  and act trivially; so they are all $1$; so $w$ belongs to $R_n$.
\end{proof}

Using the Reidemeister-Schreier rewriting method, we can
construct a normal generating set for $R_1$.  Indeed, $\bar\psi$ induces
an injective map $F/R_1\to F\wr\sym(X)$, whose image has infinite index.
However, we may chose the infinite dihedral group generated by $a$ and $\d$ on the first copy of $F$ as a transversal for $\bar\psi(F)$ in $F\wr\sym(X)$.
Then we obtain quite explicitly
\[R_1=\langle [\d^a,\d^w],[\d^a,(\c\b^a)^w],[(\c\b^a)^a,(\c\b^a)^w]\rangle^F,\]
where $w$ ranges over the elements of the infinite dihedral group generated by $\c$ and $\c^a$.

We now consider the homomorphism $\bar\varphi:F\to F$ defined by
\begin{align*}
a&\mapsto \c^a,&\b&\mapsto \d,&\c&\mapsto \b^a,&\d&\mapsto \c.
\end{align*}
By Proposition~\ref{prop:endo}, for all normal generators $r$ of $R_1$ we have
$\bar\psi(\bar\varphi(r))=(r',r)$ for some relation $r'\in\langle a,\d\rangle$.
It happens that $(a\d)^4$ is in  $R_1$ and that $\langle a,\d\rangle$ is a dihedral group of order $8$ in $G$.
Therefore $r'$ is in $R_1$ and $R_n$ is normally generated by
$\bigcup_{i<n}\bar\varphi^i(R_1)$ for all $n>0$. We conclude:
\begin{proposition}
  \[G=\langle
  a,\b,\c,\d\mid a^2,\b^2,\c^2,\d^2,\tilde\varphi^n(r)\text{
    for all }n\ge0\rangle,\]
  where $r$ ranges over the normal generators of $R_1$ above.
\end{proposition}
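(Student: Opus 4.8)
The plan is to assemble the proposition from the two facts just established: the identity $G=F/\bigcup_{n\ge0}R_n$, and the description of each $R_n$ as a normal closure. First I would discard the trivial term $R_0=1$, so that $\bigcup_{n\ge0}R_n=\bigcup_{n>0}R_n$, and recall that for every $n>0$ we have shown $R_n=\langle\bigcup_{i<n}\bar\varphi^i(R_1)\rangle^F$. Since the generating sets $\bigcup_{i<n}\bar\varphi^i(R_1)$ increase with $n$ and normal closure is monotone, the subgroups $R_n$ form an increasing chain of normal subgroups of $F$, whose union is again normal in $F$.

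The key formal step is to interchange this union with the normal-closure operation. Because taking the normal closure in $F$ commutes with directed unions of generating sets, the increasing union of the $R_n$ equals the normal closure of the union of their generating sets:
\[\bigcup_{n>0}R_n=\bigcup_{n>0}\Big\langle\bigcup_{i<n}\bar\varphi^i(R_1)\Big\rangle^F=\Big\langle\bigcup_{i\ge0}\bar\varphi^i(R_1)\Big\rangle^F.\]
Substituting into $G=F/\bigcup_{n>0}R_n$ gives $G=F/\langle\bigcup_{i\ge0}\bar\varphi^i(R_1)\rangle^F$; that is, $G$ is obtained from $F$ by imposing the relators $\bar\varphi^i(r)$ for all $i\ge0$ and all normal generators $r$ of $R_1$.

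It remains to replace $\bar\varphi$ by the free endomorphism $\tilde\varphi$, so as to match the stated L-presentation. Here I would check that $\tilde\varphi$ respects the relators $a^2,\b^2,\c^2,\d^2$: one computes $\tilde\varphi(a^2)=(\c^a)^2=a^{-1}\c^2a$, $\tilde\varphi(\c^2)=a^{-1}\b^2a$, $\tilde\varphi(\b^2)=\d^2$ and $\tilde\varphi(\d^2)=\c^2$, all of which lie in the normal closure of $\{a^2,\b^2,\c^2,\d^2\}$ in the free group on $a,\b,\c,\d$. Hence $\tilde\varphi$ descends to $\bar\varphi$ on $F$, and each $\tilde\varphi^i(r)$ agrees with $\bar\varphi^i(r)$ modulo these four relators. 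Therefore imposing $a^2,\b^2,\c^2,\d^2$ together with $\tilde\varphi^n(r)$ for $n\ge0$ on the free group yields exactly $G$, as claimed.

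I do not expect a genuine obstacle here, since the substantive content — the computation $\bar\psi(\bar\varphi(r))=(r',r)$ with $r'\in R_1$, from which the normal generation of $R_n$ by $\bigcup_{i<n}\bar\varphi^i(R_1)$ follows — is already in hand. The only points requiring care are the interchange of the union with the normal closure, which is purely formal once one observes that the chain of generating sets is increasing, and the verification that $\tilde\varphi$ descends to $F$; both are routine.
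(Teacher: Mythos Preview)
Your proposal is correct and follows precisely the route the paper intends: the paper states this proposition immediately after establishing $G=F/\bigcup_{n\ge0}R_n$ and that $R_n$ is normally generated by $\bigcup_{i<n}\bar\varphi^i(R_1)$, writing simply ``We conclude:'' with no further argument. You have supplied exactly the formal details the paper omits --- the passage from the directed union of normal closures to the normal closure of the union, and the verification that $\tilde\varphi$ on the free group descends to $\bar\varphi$ on $F$ --- and both are handled correctly.
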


\begin{proof}[Proof of Theorem~\ref{thm:presentation}]
One of the $r$ above is $[\d^a,\d]$.
Applying $\tilde\varphi$ we get $\tilde\varphi([\d^a,\d])\equiv(\c\c^a)^4=1$.
Therefore it is enough to consider 8 values of $w$, for example the following ones:
\begin{align*}
w\in\{1,\c,\c^a,\c\c^a,\c^a\c,\c\c^a\c,\c^a\c\c^a,\c\c^a\c\c^a\}.
\end{align*}
Also, $\tilde\varphi(s)$ is clearly a consequence for $s$ in $\{a^2,\b^2,\c^2,\d^2\}$.
Thus we proved the following
\begin{align}\label{eq:pres}
G=\langle
a,\b,\c,\d\mid\tilde\varphi\mid a^2,\b^2,\c^2,\d^2,[\d^a,\d^w],[\d^a,(\c\b^a)^w],[\c\b^a,(\c\b^a)^w]\rangle,
\end{align}
with $w$ ranging over the set above.
Then we use a computer to prove that the normal closure of the relations in the claim together with their iterates under $\tilde\varphi$ contains all the relations of the presentation~\eqref{eq:pres}.
This is done using GAP, in Lemma~\ref{lem:KB}.
\end{proof}

\begin{remark}
In the same way, one can also recover the usual presentation of the Grigorchuk group (see~\cite{lysionok:pres}):
\begin{align}\label{eq:lpres Grigorchuk}
\langle a,b,c,d\mid\phi\mid a^2,b^2,c^2,d^2,bcd,
[d^a,d],[d^a,d^{c^ac}]\rangle
\end{align}
where $\phi$ is given by
\begin{align*}
a\mapsto c^a,&&b\mapsto d,&&c\mapsto b,&&d\mapsto c.
\end{align*}
\end{remark}

Knowing a presentation (even an infinite one) is very useful in practice.
For example, this makes the following computations straightforward.
\begin{proposition}\label{prop:K contains gamma_3}
The group $K$ contains $\gamma_3(G)$.
\end{proposition}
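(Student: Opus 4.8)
The plan is to show $\gamma_3(G)\le K$. Recall $\gamma_3(G)=[\gamma_2(G),G]$, where $\gamma_2(G)=[G,G]$. Since $K$ is normal in $G$ by construction (it is a normal closure), it suffices to exhibit a suitable set of normal generators of $\gamma_3(G)$ inside $K$, or better, to identify $\gamma_3(G)$ with a normal closure of commutators of a given shape and check each such generator lies in $K$.

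First I would recall that $G$ is generated by the four involutions $a,\b,\c,\d$, so $\gamma_2(G)$ is normally generated by the six basic commutators $[a,\b],[a,\c],[a,\d],[\b,\c],[\b,\d],[\c,\d]$. Consequently $\gamma_3(G)=[\gamma_2(G),G]$ is normally generated by the commutators $[[x,y],z]$ with $x,y,z$ ranging over the generators. The natural approach is then to work in the nilpotent quotient $G/\gamma_4(G)$, or more economically in $G/K$: since $K$ is normal, $\gamma_3(G)\le K$ holds if and only if $\gamma_3(G/K)$ is trivial, i.e. $G/K$ is nilpotent of class at most $2$. So I would reformulate the claim as: \emph{the quotient $G/K$ is metabelian of nilpotency class $\le 2$}, and prove this directly.

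The key computational input is the structure of $G/K$ coming from Proposition~\ref{prop:get branch} and its proof. There we saw that $G/B$ is a quotient of $\langle a,\d\rangle$, a dihedral group of order $8$, that $B/K$ has order at most $2$ generated by the image of $\b$, and that $K$ has index exactly $16$ in $G$. Thus $G/K$ is a $2$-group of order $16$ in which $\b,\c\d\in B$ map into the centre-ish part and $a,\d$ generate the top dihedral quotient. Concretely, I would read off from the generators of $K$ — namely $[a,\b],[\b,\c],[\b,\d],[\c,\d]$ and $\b\c\d$ — that modulo $K$ the element $\b$ is central (it commutes with $a,\c,\d$ up to $K$), and that $\c\equiv\d \pmod{K}$ via $\b\c\d\in K$ together with $\b^2=1$. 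Hence $G/K$ is generated by the images of $a$ and $\d$ with $\b$ central, giving a group of class $\le 2$, so all triple commutators $[[x,y],z]$ vanish modulo $K$.

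The main obstacle is making the ``class $\le 2$'' claim airtight: one must verify that \emph{every} triple commutator of generators lies in $K$, not merely that the obvious ones do, and this requires knowing the precise commutation relations in $G/K$ rather than just its order. I expect this to reduce, exactly as elsewhere in the paper, to a finite check in the finite quotient $G/\stab_G(3)$, since $K\supseteq\stab_G(3)$ by Proposition~\ref{prop:get branch}; thus it suffices to confirm that the image of $\gamma_3(G)$ in the finite $2$-group $G/\stab_G(3)$ lands inside the image of $K$, which is a routine permutation-group computation in GAP. Equivalently, one computes $\gamma_3(G)$ in the nilpotent quotient afforded by the L-presentation of Theorem~\ref{thm:presentation} and checks containment in $K$ there. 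I would therefore state the proof as a short conceptual argument reducing to class $\le 2$ of $G/K$, with the final verification delegated to the finite quotient.
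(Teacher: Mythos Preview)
Your approach is essentially the paper's: reduce to showing that $G/K$ has nilpotency class at most $2$ and verify this by a finite computation. The paper's computation differs slightly from what you propose---it reads off a presentation of $G/K$ from Theorem~\ref{thm:presentation} and applies Todd--Coxeter---but your route through $G/\stab_G(3)$ works equally well since $\stab_G(3)\le K$.

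One small slip: from $\b\c\d\in K$ you get $\c\equiv\b\d\pmod K$, not $\c\equiv\d$; the image of $\b$ in $G/K$ is nontrivial (it generates $B/K$). This does no harm, since $\bar\b$ is central anyway. In fact your conceptual argument can be completed without any machine check: once $\bar\b$ is central and $\bar\c=\bar\b\bar\d$, the commutator subgroup of $G/K$ is generated by the image of $[a,\d]$; but $\langle a,\d\rangle$ is dihedral of order $8$ in $G$ (see the proofs of Propositions~\ref{prop:get branch} and~\ref{prop:endo}), so $[a,\d]=(a\d)^2$ already commutes with $a$ and $\d$ in $G$, and its image is therefore central in $G/K$.
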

\begin{proof}
Using Theorem~\ref{thm:presentation}, we see that $G/K$ is (a quotient of) the group presented by
\begin{align*}
\langle a,\c,\d\mid a^2,\c^2,\d^2,[\c,\d],[a,\c\d],[\d,\d^a],[\c,\d^a],[\c,\c^a]\rangle.
\end{align*}
A Todd-Coxeter procedure shows that this group has order $16$ and is nilpotent of class $2$.
\end{proof}

Define the group
\begin{align*}
A = \langle a\rangle^G
\end{align*}
\begin{proposition}
The group $A$ contains $\gamma_3(G)$ and has index $16$ in $G$.
\end{proposition}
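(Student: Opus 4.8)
The plan is to derive both statements from an analysis of the quotient $\bar G=G/A$, in which the generator $a$ becomes trivial. First I would compute the abelianization: since $R$ and each iterate $\tilde\varphi^n(R)$ consist of commutators, they lie in $[F,F]$ and die in $G^{\mathrm{ab}}$, so $G^{\mathrm{ab}}=\langle a,\b,\c,\d\mid a^2,\b^2,\c^2,\d^2\rangle^{\mathrm{ab}}\cong(\Z/2\Z)^4$ and $\gamma_2(G)=[G,G]$ has index $16$. The image of $A$ in $G^{\mathrm{ab}}$ is the line spanned by $a$, so $A\gamma_2(G)$ has index $8$ and $(\bar G)^{\mathrm{ab}}=G/A\gamma_2(G)$ has order $8$.

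Next I would prove the containment $\gamma_3(G)\le A$, equivalently that $\bar G$ is nilpotent of class at most $2$. This is the part I expect to be the main obstacle, because $A$ is not visibly $\tilde\varphi$-invariant, so one cannot simply read off a presentation of $\bar G$ from the L-presentation. Instead I would substitute $a=1$ into finitely many of the defining relators $\tilde\varphi^n(R)$: for instance $[\d,\c^a\b]$ collapses to $[\d,\c\b]=1$, while $[\d^a,\d]$ and $[\c^a\b,\c\b^a]$ become trivial, and the first iterates yield further relations among $\b,\c,\d$. Because $\bar G$ satisfies all of these, it is a quotient of the finitely presented group $Q=\langle\b,\c,\d\mid\b^2,\c^2,\d^2,\dots\rangle$ obtained from any finite subset of them; I would then verify by a Todd--Coxeter or nilpotent-quotient computation (as in Proposition~\ref{prop:K contains gamma_3}, using the same tools) that a suitable such $Q$ is already nilpotent of class $2$. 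Since a quotient of a class-$2$ group has class at most $2$, this gives $\gamma_3(G)\le A$.

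Finally, with $\gamma_3(G)\le A$ established, $\bar G$ is a quotient of the finite nilpotent group $G/\gamma_3(G)$, so the index is a finite computation. By Corollary~\ref{cor:lcs} (case $n=2$) one has $\gamma_2(G)/\gamma_3(G)\cong(\Z/2\Z)^4$, and modulo $\gamma_3(G)$ the image of $A$ is the subgroup generated by $[a,\b]$, $[a,\c]$, $[a,\d]$. It remains to check that these three commutators are linearly independent in $(\Z/2\Z)^4$, which is a direct verification in $G/\gamma_3(G)$; this makes their span a hyperplane of order $8$, so $\gamma_2(G)A/A$ has order $2$. Combining with the abelian part computed above gives $\abs{\bar G}=8\cdot 2=16$, that is, $A$ has index $16$ in $G$.
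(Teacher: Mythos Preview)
Your approach is correct and close to the paper's. Both reduce the containment $\gamma_3(G)\le A$ to a Todd--Coxeter computation on a finitely presented group $Q$ surjecting onto $G/A$, obtained by setting $a=1$ in finitely many relators of the L-presentation. The paper asserts directly that $Q=\langle\b,\c,\d\mid\b^2,\c^2,\d^2,[\d,\c\b],[\c,\b\d],[\b,\d\c]\rangle$ \emph{is} a presentation of $G/A$ and reads off both order $16$ and class $2$ from a single enumeration. Your observation that $A$ is not $\tilde\varphi$-invariant (indeed $\tilde\varphi(a)=\c^a\notin A$) is well taken, and the paper does not spell out why no further relations are needed---strictly speaking, Todd--Coxeter on $Q$ only yields $[G:A]\le 16$. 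Your route, computing the index from the filtration $\gamma_3\le A\cap\gamma_2\le\gamma_2\le G$, supplies the matching lower bound explicitly; the paper presumably relies on the same Todd--Coxeter output, together with an implicit check that the surjection $Q\to G/A$ is an isomorphism.

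Two small points. First, what you describe as ``the image of $A$ modulo $\gamma_3$'' should be the image of $A\cap\gamma_2$ in $\gamma_2/\gamma_3$; the full quotient $A/\gamma_3$ also contains the class of $a$. Second, citing Corollary~\ref{cor:lcs} for the structure of $\gamma_2/\gamma_3$ is a forward reference---use Lemma~\ref{lem:explicit lcs} instead, which is a stand-alone nilpotent-quotient computation and moreover exhibits $[a,\b],[a,\c],[a,\d],[\b,\c]$ as an explicit basis of $\gamma_2/\gamma_3$, giving the linear independence you need for free.
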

\begin{proof}
From Theorem~\ref{thm:presentation} we see that the group $G/A$ admits the presentation
\begin{align*}
\langle\b,\c,\d\mid\b^2,\c^2,\d^2,[\d,\c\b],[\c,\b\d],[\b,\d\c]\rangle.
\end{align*}
A Todd-Coxeter procedure shows that this group has order $16$ and is nilpotent of class $2$.
\end{proof}

The following Proposition is needed in the article~\cite{bartholdi-s-z:cspbg}.
\begin{proposition}\label{prop:[K,G]}
The group $[K,G]$ has index $128$ in $G$ and is generated by $\gamma_3(G)$ and $[a,\b\c\d]$.
Moreover, $[K,G]$ contains $\gamma_3(G)$ and the quotient $K/[K,G]$ is isomorphic to $C_4\times C_2$, where $C_4$ is generated by $\b\c\d$ and $C_2$ is generated by $[a,\b]$.
\end{proposition}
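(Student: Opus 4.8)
The plan is to compute the relevant quotients directly from the L-presentation of Theorem~\ref{thm:presentation}, reducing everything to finite presentations that a Todd--Coxeter procedure can handle, exactly as in Propositions~\ref{prop:K contains gamma_3} and the one for $A$. The key structural fact I would establish first is that $[K,G]$ contains $\gamma_3(G)$. Since $K$ contains $\gamma_3(G)$ by Proposition~\ref{prop:K contains gamma_3}, we have $[\gamma_3(G),G]=\gamma_4(G)\le[K,G]$; but I actually want the stronger inclusion $\gamma_3(G)\le[K,G]$, so I would argue that $\gamma_3(G)=[\gamma_2(G),G]$ and that $\gamma_2(G)=[G,G]$ is generated modulo $\gamma_3(G)$ by the commutators of generators, most of which lie in $K$ by its very definition. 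The point is that $K$ was defined to contain $[a,\b],[\b,\c],[\b,\d],[\c,\d]$ and $\b\c\d$, so $[G,G]$ is generated modulo $K$ by the single remaining commutator class $[a,\c]$ or equivalently by $[a,\b\c\d]$; feeding this into $[\cdot,G]$ shows $\gamma_3(G)\le[K,G]$.

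Granting that, I would identify the generators of $[K,G]$. Modulo $\gamma_3(G)$ the subgroup $K$ is generated by the images of $[a,\b],[\b,\c],[\b,\d],[\c,\d]$ and $\b\c\d$, so $[K,G]$ is generated modulo $\gamma_3(G)$ by the commutators of these five elements with the four generators $a,\b,\c,\d$. Each such commutator either already lies in $\gamma_3(G)$ (when the $K$-generator is itself a commutator) or contributes a new class; the only genuinely new contribution comes from $[\b\c\d,a]=[a,\b\c\d]^{-1}$, since the other four $K$-generators are commutators and hence their commutators with generators land in $\gamma_3(G)$. This is precisely the claim that $[K,G]$ is generated by $\gamma_3(G)$ together with $[a,\b\c\d]$.

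For the index and the isomorphism type of $K/[K,G]$, I would set up the finite presentation for $G/[K,G]$ obtained by imposing on the presentation of Theorem~\ref{thm:presentation} the relations that kill $[a,\b\c\d]$ and all of $\gamma_3(G)$ (the latter being automatic once we pass to the relevant nilpotent quotient), and then run Todd--Coxeter to confirm the order is $128$. Independently, knowing $G/K$ has order $16$ from Proposition~\ref{prop:get branch}, the index $128$ forces $[G:K]\cdot[K:[K,G]]=16\cdot[K:[K,G]]=128$, hence $[K:[K,G]]=8$, matching an abelian group of order $8$. To pin down that $K/[K,G]\cong C_4\times C_2$ with the stated generators, I would exhibit $\b\c\d$ as having order $4$ and $[a,\b]$ as having order $2$ in the quotient, and check they are independent; the order-$4$ claim for $\b\c\d$ is consistent with the earlier remark that $K^{\mathrm{ab}}$ has a $C_4$-factor and with $K/(X*K)\cong C_4$.

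The main obstacle will be the bookkeeping in the second step: verifying that no commutator $[k,g]$ with $k$ a defining generator of $K$ produces a class outside $\langle\gamma_3(G),[a,\b\c\d]\rangle$, and in particular confirming that $\b\c\d$ survives to order exactly $4$ rather than collapsing. This requires either a careful hand computation of the relevant commutators using the relations $a^2=\b^2=\c^2=\d^2=1$ and the branch relations of Proposition~\ref{prop:get branch}, or a direct machine verification in the nilpotent quotient of the L-presented group via NQL, followed by the Todd--Coxeter order check; I would use the latter to certify the order $128$ and the structure of $K/[K,G]$ simultaneously.
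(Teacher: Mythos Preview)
The paper's proof is far more economical than yours: it uses only the trivial implication $K\supseteq\gamma_3(G)\Rightarrow[K,G]\supseteq[\gamma_3(G),G]=\gamma_4(G)$, and then \emph{every} assertion in the proposition --- the inclusion $\gamma_3(G)\le[K,G]$, the index $128$, the generating set, and the structure of $K/[K,G]$ --- becomes a statement about the finite $2$-group $G/\gamma_4(G)$, verified at once with NQL. No structural argument for $\gamma_3(G)\le[K,G]$ is attempted; that inclusion is one of the facts checked in the finite quotient.

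Your hand argument for $\gamma_3(G)\le[K,G]$ has a real gap. Even granting that $[G,G]$ is generated modulo $K$ by a single element $x$, you cannot simply ``feed this into $[\cdot,G]$'': writing $\gamma_2=(K\cap\gamma_2)\cdot\langle x\rangle$ only yields
\[
\gamma_3=[\gamma_2,G]\le[K,G]\cdot\langle[x,g]:g\in G\rangle^G,
\]
and since $x\notin K$ there is no a priori reason the commutators $[[a,\c],z]$ lie in $[K,G]$; you never address this. Relatedly, the parenthetical ``or equivalently $[a,\b\c\d]$'' is a slip: because $\b\c\d\in K$, the element $[a,\b\c\d]$ already lies in $[K,G]\subseteq K$ and is \emph{trivial} modulo $K$, so it cannot generate $G'K/K$. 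Your second step (identifying generators of $[K,G]$) is correct in outline but also silently uses that $[\b\c\d,s]\in\gamma_3(G)$ for $s\in\{\b,\c,\d\}$; this holds, but only because of the relations $[\b,\c]=[\b,\d]=[\c,\d]$ in $\gamma_2/\gamma_3$, which you would have to extract from the L-presentation. All of these difficulties evaporate if, as the paper does, you drop the attempt to reach $\gamma_3$ by hand and simply work in $G/\gamma_4(G)$ from the outset.
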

\begin{proof}
Since $K$ contains $\gamma_3(G)$, the group $[K,G]$ contains $\gamma_4(G)$.
Therefore, all computations can be made in $G/\gamma_4(G)$, which is a finite $2$-group.
The rest is routine check using NQL~\cite{hartung:nql}.
\end{proof}

Let $C$ be the group generated by $[K,G]$ and $[a,\b][\b,\c]$.
\begin{proposition}\label{prop:C}
The group $C$ is normal, of index $64$ in $G$.
Moreover, $C$ contains $\gamma_3(G)$ and the quotient $K/C$ is isomorphic to $C_4$, generated by $\b\c\d$.
\end{proposition}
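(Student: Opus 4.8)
The plan is to reduce the entire statement to a finite computation in a nilpotent quotient, exactly as in the proof of Proposition~\ref{prop:[K,G]}. First I would observe that $C$ contains $\gamma_3(G)$ for free: by definition $C\supseteq[K,G]$, and Proposition~\ref{prop:[K,G]} asserts $[K,G]\supseteq\gamma_3(G)$. In particular $C\supseteq\gamma_4(G)$, so $G/\gamma_4(G)$ is a finite $2$-group inside which all the remaining claims can be checked, e.g.\ with NQL~\cite{hartung:nql}.

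Normality I would settle formally. Since $K$ is normal in $G$, so is $[K,G]$ (conjugating $[k,g]$ by $h$ gives $[k^h,g^h]\in[K,G]$). The extra generator $[a,\b][\b,\c]$ lies in $K$, and by the very definition of $[K,G]$ the group $G$ acts trivially on $K/[K,G]$; hence for every $g\in G$ we have $([a,\b][\b,\c])^g\in[a,\b][\b,\c]\cdot[K,G]\subseteq C$. Thus $C^g=C$ for all $g$, so $C$ is normal in $G$.

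For the index and the structure of $K/C$ I would pass to the quotient $K/[K,G]$, which by Proposition~\ref{prop:[K,G]} is isomorphic to $C_4\times C_2$, the $C_4$ generated by the image of $\b\c\d$ and the $C_2$ by the image of $[a,\b]$. The one genuine computation is to locate the image of $[a,\b][\b,\c]$ here: a calculation in $G/\gamma_4(G)$ (or by hand from the relations of Theorem~\ref{thm:presentation}) shows $[\b,\c]\equiv(\b\c\d)^2$ modulo $[K,G]$, so $[a,\b][\b,\c]$ maps to the order-$2$ element $(\b\c\d)^2[a,\b]$. This element differs from $(\b\c\d)^2$, so quotienting $C_4\times C_2$ by the subgroup it generates keeps $\b\c\d$ of order $4$ and yields $K/C\cong C_4$ generated by $\b\c\d$. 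Consequently $C/[K,G]$ has order $2$, whence $[K:C]=4$ and $[G:C]=[G:K]\,[K:C]=16\cdot4=64$, using $[G:K]=16$ from Proposition~\ref{prop:get branch}.

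The main obstacle is precisely this computational step: verifying that $[a,\b][\b,\c]$ reduces to an element of order $2$ in $C_4\times C_2$ that is \emph{not} the square of the $C_4$-generator. If it collapsed to the identity the index of $C$ would be $128$ instead of $64$, and if it coincided with $(\b\c\d)^2$ the quotient $K/C$ would be $C_2\times C_2$ rather than cyclic; so this is the delicate point that the nilpotent-quotient calculation must pin down. Everything else follows formally once Proposition~\ref{prop:[K,G]} is in hand.
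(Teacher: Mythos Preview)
Your proposal is correct and follows essentially the same approach as the paper: observe that $C\supseteq[K,G]\supseteq\gamma_3(G)$ (by Proposition~\ref{prop:[K,G]}), so every remaining assertion is a statement about the finite $2$-group $G/\gamma_3(G)$ and can be verified there with NQL. The paper's proof says exactly this in two lines; your version is more fleshed out---you give a clean formal argument for normality (using that $G$ acts trivially on $K/[K,G]$) and reduce the identification of $K/C$ to locating the image of $[a,\b][\b,\c]$ inside $K/[K,G]\cong C_4\times C_2$, but the underlying strategy is identical, and you could even work in $G/\gamma_3(G)$ rather than $G/\gamma_4(G)$.
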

\begin{proof}
Since $[K,G]$ contains $\gamma_3(G)$, all computations can be made in $G/\gamma_3(G)$, which is a finite $2$-group.
The rest is routine check using NQL~\cite{hartung:nql}.
\end{proof}

\section{The Congruence Kernel}

\subsection{A Basis of Neighbourhoods of the Identity in $\widehat{G}$}
\begin{definition}
Let $X$ be a set and let $\{A_i\}_{i\in I}$ and $\{B_j\}_{j\in J}$ be two filters in $X$.
We say $\{A_i\}$ is \emph{cofinal to $\{B_j\}$} if for every $j\in J$, there is $i\in I$ such that the inclusion $A_i\leq B_j$ holds.
We say $\{A_i\}$ and $\{B_j\}$ are \emph{cofinal} if $\{A_i\}$ is cofinal to $\{B_j\}$ and $\{B_j\}$ is cofinal to $\{A_i\}$.
\end{definition}

\begin{definition}
Let $G$ be a subgroup of $\aut X^*$.
A subgroup $H\leq G$ is called a \emph{congruence subgroup} if $H$ contains $\stab_G(n)$ for some $n\ge0$.
\end{definition}

Consider the three collections $\C=\{\stab_G(n):n\ge0\}$, $\B=\{X^n*K:n\ge0\}$ and $\P=\{N\le G:N\text{ is normal of finite index in }G\}$.
Taken as basis of neighbourhoods of the identity in $G$, they define the \emph{congruence}, \emph{branch} and \emph{profinite} topology, respectively.

We already know that $\C$ and $\B$ are cofinal by Proposition~\ref{prop:get branch}.
That is, every $X^n*K$ is a congruence subgroup.
This implies that the congruence and the branch topologies coincide, and therefore the rigid kernel of $G$ is trivial, in the terminology introduced in~\cite{bartholdi-s-z:cspbg}.

The completion of $G$ with respect to $\C$ is $\overline{G}=\varprojlim G/\stab_G(n)$;
this group is the closure of $G$ in $\aut X^*$.
The completion of $G$ with respect to $\P$ is $\widehat{G}=\varprojlim_\P G/N$, this is the \emph{profinite completion} of $G$.

Recall the group $C$ from Proposition~\ref{prop:C}.
We now prove that the collection $\P$ and the sequence $\{X^n*C:n\in\N\}$ are cofinal.
We shall see later that the groups $X^n*C$ are \emph{not} congruence subgroups.
Therefore the branch kernel of $G$ is non-trivial.

\begin{proposition}\label{prop:C cofinal}
Any non-trivial normal subgroup of $G$ contains $X^n*C$ for all $n$ big enough.
\end{proposition}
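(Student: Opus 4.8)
The plan is to exploit the fact that $G$ is weakly regular branch over $C$ (which follows once we know $X*C\le C$, guaranteed by $C\supseteq[K,G]\supseteq X*[K,G]$ together with a check that the extra generator $[a,\b][\b,\c]$ also branches into $C$) and that $G$ is just-infinite, so that every non-trivial normal subgroup has finite index. Concretely, let $N\trianglelefteq G$ be non-trivial. First I would pick a non-trivial element $1\ne g\in N$ and locate, using the contracting property from Lemma~\ref{lem:get-contract}, a level $m$ and a vertex $v\in X^m$ at which $g$ acts non-trivially on the subtree below $v$, so that the section $g\at{v}$ is a non-trivial element of the nucleus-controlled group $G$. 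The goal of this first stage is to show that $N$ contains some non-trivial $v$-translate, i.e.\ an element of $\rist(v)$ acting non-trivially below $v$.

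The heart of the argument is a commutator-pushing step. Since $N$ is normal and $G$ acts level-transitively (hence the conjugates $\stab_G(v)\at v$ exhaust $G$ by recurrence), I would form commutators $[g,h]$ with $h$ ranging over elements supported below $v$; normality of $N$ keeps these inside $N$, while the branch structure lets me read off their sections below $v$. The key computation is to produce, inside $N$, an element whose only non-trivial section lives at a single vertex $v$ and equals a prescribed generator of $C$. Because $C$ contains $\gamma_3(G)=[[G,G],G]$ and $[K,G]$, taking enough commutators of $g$ against $G$-elements realized below $v$ will land a full copy of $v*c$ in $N$ for some $0\ne c\in C$; branch-transitivity then upgrades this to $v*C\subseteq N$ for all $v$ at that level, and self-similarity of $C$ (i.e.\ $X*C\le C$) propagates it down to $X^n*C\subseteq N$ for all large $n$.

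The main obstacle I expect is the commutator-pushing step: controlling which element of $C$ one actually obtains after commuting, and verifying that the resulting section is genuinely non-trivial rather than collapsing. This is precisely the place where one must use the finite quotients. Since $C\supseteq\gamma_3(G)$, all relevant relations can be tested in the finite nilpotent quotient $G/\gamma_3(G)$ (as in Propositions~\ref{prop:[K,G]} and~\ref{prop:C}); I would run the commutator calculation there, with the help of NQL~\cite{hartung:nql}, to certify that for any non-trivial coset one can commute down into a non-trivial element of $C$ below a single vertex. The just-infiniteness of $G$ provides the clean finish: $N$ has finite index, so for $n$ large $\stab_G(n)\le N$, and combined with the branch inclusion $X^n*C\subseteq N$ just established, we conclude that $X^n*C\subseteq N$ for all sufficiently large $n$, as claimed.
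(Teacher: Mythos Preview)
Your argument has two serious problems. First, you invoke just-infiniteness of $G$ to conclude that $N$ has finite index; but in the paper just-infiniteness is deduced \emph{as a corollary} of this very proposition, so this is circular. Second, and more damagingly, you then write ``$N$ has finite index, so for $n$ large $\stab_G(n)\le N$''. That implication is precisely the congruence subgroup property, and one of the main points of the paper is that $G$ \emph{fails} it: there exist finite-index normal subgroups (indeed $X^n*C$ itself) that contain no level stabilizer. So that step is simply false here, and the ``clean finish'' paragraph collapses.

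There is also a gap earlier. Your commutator-pushing sketch produces, at best, some single element $v*c$ with $c\ne 1$ inside $N$; you then jump to $v*C\subseteq N$ without justification. The standard rigid-stabilizer argument (the one you are implicitly invoking, extracted from Grigorchuk's proof) yields $X^n*K'\le N$ for the branching subgroup $K$, not $X^n*C$. Since $C\supseteq K'$, the inclusion $X^n*K'\le N$ goes the wrong way for what you need. The missing ingredient is a separate chain of inclusions $K'\supseteq X^m*[K,G]\supseteq X^{m+1}*C$ for some explicit $m$; this is exactly what the paper supplies in Lemmas~\ref{lem:K' [K,G]} and~\ref{lem:[K,G] C}, the first by exhibiting elements $g_s\in K$ with $g_s\at{1^5}=s$ for each generator $s$, the second by checking that $\varphi([a,\b][\b,\c])\in\gamma_3$. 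Once those two lemmas are in place, the proposition follows immediately from Lemma~\ref{lem:branch K'} without any appeal to just-infiniteness or congruence.
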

\begin{corollary}
$G$ is just-infinite.
\end{corollary}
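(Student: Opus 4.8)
The plan is to read off just-infiniteness directly from Proposition~\ref{prop:C cofinal}, which I am entitled to assume. Recall that a group is \emph{just-infinite} if it is infinite and every proper quotient is finite, equivalently if it is infinite and every non-trivial normal subgroup has finite index. That $G$ is infinite was already observed when $G$ was introduced, since recurrence together with level-transitivity forces it; so the whole content of the corollary is the finite-index assertion. First I would fix a non-trivial normal subgroup $N\trianglelefteq G$ and apply Proposition~\ref{prop:C cofinal} to obtain an $n$ with $X^n*C\le N$. It then suffices to prove $[G:X^n*C]<\infty$, for then $[G:N]\le[G:X^n*C]<\infty$.

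The core step is therefore the finiteness of $[G:X^n*C]$, and here the branch structure does the work. By Proposition~\ref{prop:C} we have $C\le K$ with $K/C\cong C_4$, and by Proposition~\ref{prop:get branch} the group $G$ is regular branch over $K$, so that $X^n*K$ already has finite index in $G$. Because translates attached to distinct vertices of one level have disjoint supports, both $X^n*K$ and $X^n*C$ split as internal direct products indexed by $X^n$, and $C\trianglelefteq K$ gives $X^n*C\trianglelefteq X^n*K$ with
\[
(X^n*K)/(X^n*C)\cong\prod_{v\in X^n}(v*K)/(v*C)\cong(K/C)^{2^n},
\]
a finite group of order $4^{2^n}$. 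Multiplying $[G:X^n*K]<\infty$ by this finite index yields $[G:X^n*C]<\infty$, which finishes the argument.

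I expect the only genuine obstacle to be this finite-index step for $X^n*C$: one must be sure that restricting the finite-index subgroup $C$ to each subtree and then taking the product over an entire level again produces a finite-index subgroup. This becomes transparent once one records $C\le K$ with $[K:C]=4<\infty$ and combines it with the regular-branch property of $K$; the supporting facts, namely normality and finite index of $C$ and the displayed direct-product decomposition, are already assembled in Propositions~\ref{prop:get branch} and~\ref{prop:C}, so no new computation is needed. The substantive work lies entirely in Proposition~\ref{prop:C cofinal} itself, which I am taking as given.
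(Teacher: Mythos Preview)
Your argument is correct and is exactly what the paper intends: the paper states the corollary with no separate proof, treating it as immediate from Proposition~\ref{prop:C cofinal}. You have merely spelled out the one implicit point, namely that $X^n*C$ has finite index in $G$, which follows from $C\le K$, $[K:C]<\infty$ (Proposition~\ref{prop:C}) and the regular-branch property (Proposition~\ref{prop:get branch}) via the direct-product decomposition you wrote down.
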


The proof of the proposition splits into three lemmas.
The following has been extracted from the proof of~\cite{grigorchuk:jibg}*{Theorem~4}.
\begin{lemma}\label{lem:branch K'}
Let $G\leq\aut X^*$ be a level-transitive, self-similar, regular branch group over $K$.
Then any non-trivial normal subgroup of $G$ contains $X^n*K'$ for all $n$ big enough.
\end{lemma}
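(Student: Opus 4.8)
The plan is to exploit level-transitivity together with the branching structure to ``spread'' a single non-trivial element into a full copy of $X^n*K'$. Let $1\neq N\trianglelefteq G$ be given and pick $1\neq g\in N$. The key observation is that there is some level $m$ at which $g$ acts non-trivially, meaning $g$ moves some vertex $u\in X^m$. Equivalently, after descending far enough, $g$ has a state $g\at v$ that is an honest tree automorphism swapping two subtrees below $v$. First I would reduce to this situation: by self-similarity and the recurrence of $G$, I may replace $g$ by a suitable conjugate and project to a vertex $v$ where $g\at v$ moves the first-level letter, so that $g\at v = h\sigma$ with $h\in\stab_G(1)$ and $\sigma$ the non-trivial permutation of $X$.

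Next I would use the standard commutator trick for branch groups. For any $k\in K$ the element $0*k$ lies in $K$ (by the branching property $X*K\le K$), and since $N$ is normal, $N$ contains $[g, 0*k]$ for every $k\in K$. Working inside the subtree below $v$ and using $g\at v=h\sigma$, the commutator $[g\at v, 0*k]$ computes, via the wreath decomposition, to something of the form $(0*k)^{-1}(0*k)^{g\at v}$, and because $\sigma$ interchanges the two subtrees this equals a product involving $0*k$ in one coordinate and a conjugate of $k$ in the other. The upshot is that $N$ (after projecting into the subtree) contains all commutators $[k_1,k_2]$ as $k_1,k_2$ range suitably over $K$ pushed down one level; since $K$ is recurrent-ish and the states generate $K$, these commutators generate $K'=[K,K]$ sitting inside a single first-level subtree, i.e.\ $N\at v \supseteq 0*K'$ or $1*K'$ after the dust settles.

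Having produced one translate $w*K'\subseteq N$ for some vertex $w$, I would finish by invoking level-transitivity: $G$ acts transitively on each $X^\ell$, so conjugating $w*K'$ by elements of $G$ carries it to $w'*K'$ for every $w'$ on the same level as $w$, and all these lie in $N$ because $N$ is normal. Taking the product over a whole level yields $X^n*K'\subseteq N$ for the appropriate $n=|w|$; and since $X^{n+1}*K'\le X^n*K'$ as branching subgroups, the containment persists for all larger $n$, giving the ``for all $n$ big enough'' conclusion.

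The hard part will be the bookkeeping in the commutator computation: tracking how $[g\at v, 0*k]$ decomposes under $\psi$ and verifying that the resulting elements genuinely generate $K'$ rather than some smaller subgroup. The subtlety is that $h=g\at v\cdot\sigma$ may itself be complicated, so one must argue that the collection $\{[h\sigma, 0*k]:k\in K\}$, after projection, sweeps out enough of $[K,K]$; this is exactly where the hypotheses that $G$ is \emph{self-similar} and \emph{regular branch over $K$} (so that the states of $K$-elements again land in $K$) are essential. I expect the cleanest route is to first establish that $N$ contains $\{(k^{-1})(k^{\sigma\text{-swap}}):k\in 0*K\}$ and then recognize these as normal generators of $0*K'$ modulo deeper levels, pushing any error terms into $X^{n'}*K'$ for $n'>n$ and absorbing them inductively.
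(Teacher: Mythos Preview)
The paper gives no proof of this lemma; it simply points to \cite{grigorchuk:jibg}*{Theorem~4}, and your outline is indeed the classical argument found there: manufacture a copy of $w*K'$ inside $N$ via commutators, then spread by level-transitivity. The strategy is correct, but the write-up has a real gap at the key step.

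You assert that $N$ contains $[g,0*k]$ (true), and then ``work inside the subtree below $v$'' by computing $[g\at v,0*k]$. These are different elements, and the passage $g\mapsto g\at v$ does \emph{not} carry $N$ along: $g\at v$ is merely some element of $G$, so nothing you build from it lies in $N$ a priori. The clean repair is to avoid projection entirely. Choose $u\in X^n$ with $u^g\neq u$; then $u*k\in X^n*K\le K\le G$, and
\[
[u*k,\,g]\;=\;(u*k^{-1})\cdot\bigl(u^g*k^{\,g\at u}\bigr)\in N
\]
is an element of $\stab(n)$ supported on the two distinct vertices $u$ and $u^g$. One further commutator with $u*\ell$ (for $\ell\in K$) yields $u*[\ell,k^{-1}]\in N$, hence $u*K'\le N$ outright---no ``error terms'' to absorb, no induction, and no appeal to recurrence (which is not among the hypotheses). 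For the spreading step you should also say why conjugating $u*K'$ by $h\in G$ gives $u^h*K'$ rather than $u^h*(K')^{h\at u}$: either observe that $K'\trianglelefteq G$, or, more robustly, note that $g^h\in N$ moves $u^h$ and rerun the double-commutator argument at that vertex.
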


Since the above lemma applies for $G$, the next two prove Proposition~\ref{prop:C cofinal}.

\begin{lemma}\label{lem:K' [K,G]}
The group $K'$ contains $X^n*[K,G]$ for all $n$ big enough.
\end{lemma}
\begin{proof}
We shall prove the statement for $n=5$.
Since $K'$ contains $K'\times K'$, the claim follows.

$K'$ is characteristic in $K$ which is normal in $G$, thus $K'$ is normal in $G$.
Hence it is sufficient to show that $1^5*[k,s]$ is in $K'$ for all $k\in K$ and $s\in\{a,\b,\c,\d\}$.
We define the following elements of $K$:
\begin{align*}
g_a&=[a,\b]^{4\c\b^a},&g_\b&=[\c,\d]^4,\\
g_\c&=[\b,\d]^{4\b^a},&g_\d&=[a,\b]^{4\c}.
\end{align*}
Then $g_s\in\stab_K(1^5)$ and $(g_s)\at{1^5}=s$ for all $s\in\{a,\b,\c,\d\}$.
Therefore $[1^5*k,g_s]=1^5*[k,s]$ is in $K'$ for all $k\in K$ and $s\in\{a,\b,\c,\d\}$.
\end{proof}

\begin{lemma}\label{lem:[K,G] C}
The group $[K,G]$ contains $X*C$.
\end{lemma}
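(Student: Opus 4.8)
The plan is to prove the equivalent statement $1*C\le[K,G]$ and then deduce $X*C\le[K,G]$ by symmetry. Since $a=\sigma$ swaps the two top subtrees we have $a(1*c)a^{-1}=0*c$ for every $c$, and $[K,G]$ is normal in $G$ (being the commutator of the normal subgroup $K$ with $G$); hence once $1*C\le[K,G]$ is known, normality gives $0*C\le[K,G]$ as well, and $X*C=(0*C)(1*C)$ then lies in $[K,G]$.

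To analyse $1*C$ I would exploit the identity $\varphi(x)=\psi^{-1}(1,x)=1*x$, valid for every $x\in K$, recorded just after Proposition~\ref{prop:endo}. The group $C$ is contained in $K$, since it is generated by $[K,G]\le K$ together with $[a,\b][\b,\c]$, and $[a,\b],[\b,\c]\in K$. Consequently $1*C=\varphi(C)$, and because $C=\langle[K,G],[a,\b][\b,\c]\rangle$ it suffices to check that $\varphi$ maps both the normal subgroup $[K,G]$ and the single extra generator $[a,\b][\b,\c]$ into $[K,G]$.

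For the first part I would use that $\varphi$ is an endomorphism of $G$ (Proposition~\ref{prop:endo}), so that $\varphi([K,G])=[\varphi(K),\varphi(G)]$. Now $\varphi(K)=1*K$ is contained in $K$ because $K$ is a branching subgroup, i.e. $1*K\le K$, and $\varphi(G)\le G$; hence $\varphi([K,G])\le[K,G]$. For the extra generator I would compute $1*([a,\b][\b,\c])$ directly from the branch identities established in the proof of Proposition~\ref{prop:get branch}, namely $1*[a,\b]=[\b,\d]$ and $1*[\b,\c]=[\d,\b]\cdot[\d,[a,\b]]^\b$. Since $1*$ is a homomorphism, multiplying these and cancelling $[\b,\d]\cdot[\d,\b]=1$ leaves $1*([a,\b][\b,\c])=[\d,[a,\b]]^\b$. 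As $[a,\b]\in K$ and $\d\in G$, the commutator $[\d,[a,\b]]$ lies in $[K,G]$, and conjugating by $\b$ keeps it there by normality; thus the extra generator is handled and $1*C\le[K,G]$ follows.

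I expect the only genuinely delicate step to be the bookkeeping in the last computation: one must quote the branch formulas from Proposition~\ref{prop:get branch} with the correct orientation of each commutator, so that the cancellation $[\b,\d]\cdot[\d,\b]=1$ really occurs and the residual term is visibly of the form $[\d,k]^\b$ with $k\in K$. Everything else is formal, as the identity $\varphi|_K=1*(\cdot)$, the branch inclusion $1*K\le K$, and the normality of $[K,G]$ carry the argument, requiring no further machine computation beyond what Proposition~\ref{prop:get branch} already supplies.
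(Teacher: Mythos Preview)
Your argument is correct, and the reduction to checking that $1*x\in[K,G]$ for $x=[a,\b][\b,\c]$ coincides with the paper's. The difference lies only in that final verification: the paper observes that $1*x$ is trivial in $G/\gamma_3$ and then invokes $\gamma_3\le[K,G]$ from Proposition~\ref{prop:[K,G]}, whereas you compute $1*x=[\d,[a,\b]]^\b$ directly from the branching identities in the proof of Proposition~\ref{prop:get branch} and recognise this element as lying in $[K,G]$ by inspection. Your route is slightly more elementary, since it avoids the nilpotent-quotient check and the appeal to $\gamma_3\le[K,G]$; the paper's route is shorter to state once those facts are in hand. Both are valid, and your bookkeeping with the commutator orientations is correct: $[\b,\d]\cdot[\d,\b]=1$ does cancel as you say.
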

\begin{proof}
Since $C$ is a subgroup of $K$, and $C$ is normal in $G$ and generated by $[K,G]$ and $x=[a,\b][\b,\c]$, it is enough to show that $1*x=\varphi(x)$ is contained in $[K,G]$.
One can check that $1*x$ is trivial in $G/\gamma_3$.
\end{proof}

\subsection{The Kernel}
The congruence kernel of $G$ is non-trivial, and therefore $G$ does not have the congruence subgroup property:
\begin{theorem}
The congruence kernel $\ker(\widehat{G}\to\overline{G})$ is isomorphic to $(\Z/4\Z)[[X^\omega]]$ as a profinite $\overline{G}$-module.
\end{theorem}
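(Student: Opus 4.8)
The plan is to realise the congruence kernel as an explicit inverse limit of finite layers and then to read off both its abelian structure and its $\overline G$-module structure from the branch structure of $G$ over $K$.

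First I would rewrite the two completions through cofinal filtrations. By Proposition~\ref{prop:get branch} the collections $\C$ and $\B$ are cofinal, so $\overline G=\varprojlim_n G/(X^n*K)$; by Proposition~\ref{prop:C cofinal} the sequence $\{X^n*C\}$ is cofinal in $\P$, so $\widehat G=\varprojlim_n G/(X^n*C)$. As $C\le K$ we have $X^n*C\le X^n*K$, and $\widehat G\to\overline G$ is the inverse limit of the surjections $G/(X^n*C)\to G/(X^n*K)$ of finite groups. Since all transition maps are onto, the inverse limit is exact and hence
\[\ker(\widehat G\to\overline G)=\varprojlim_n(X^n*K)/(X^n*C).\]

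Next I would compute the layers. Because $K$ and $C$ are branching, $X^n*K=\prod_{v\in X^n}v*K$ and $X^n*C=\prod_{v\in X^n}v*C$ are internal direct products, whence $(X^n*K)/(X^n*C)\cong\prod_{v\in X^n}K/C$. By Proposition~\ref{prop:C} the quotient $K/C$ is cyclic of order $4$, generated by the image of $\b\c\d$, so each layer is $(\Z/4\Z)^{X^n}$ with $v$-th factor generated by the class of $v*(\b\c\d)$. The heart of the proof is to identify the transition map $(X^{n+1}*K)/(X^{n+1}*C)\to(X^n*K)/(X^n*C)$: it splits over $v\in X^n$, and on the block over $v$ it carries the generators attached to the children $v0,v1$ to the images in $K/C$ of $0*(\b\c\d)$ and $1*(\b\c\d)$. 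Now $[K,G]\le C$ makes $K/C$ a trivial $G$-module, so $0*(\b\c\d)=(1*(\b\c\d))^a\equiv 1*(\b\c\d)\pmod C$ and the two children share a single image. Using that $\varphi$ restricts on $K$ to $x\mapsto 1*x$, this common image is the class of $1*(\b\c\d)=\varphi(\b\c\d)=\d\b^a\c$, which I expect to generate $K/C$ — a check in the finite $2$-group $G/\gamma_3(G)$, licit since $\gamma_3(G)\le C$. Granting this, each transition map sends both child-generators to the generator attached to their parent, so $\varprojlim_n(\Z/4\Z)^{X^n}$ with these parent-augmentation maps is the free profinite $\Z/4\Z$-module on the boundary, namely $(\Z/4\Z)[[X^\omega]]$.

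Finally I would fix the $\overline G$-module structure. Each $X^n*K$ and $X^n*C$ is normal in $G$, so $G$ acts on the layer by conjugation, sending the factor indexed by $v$ to the one indexed by $v^g$ and conjugating its content by the state $g\at{v}$; by the triviality of $K/C$ as a $G$-module this twist vanishes, so the action merely permutes coordinates along the action on $X^n$. It therefore factors through $G/\stab_G(n)$ and passes in the limit to the action of $\overline G$ on the boundary $X^\omega$, which is the tautological structure of $(\Z/4\Z)[[X^\omega]]$. The one real obstacle is the finite verification that $\d\b^a\c$ generates $K/C$: it is precisely this order-$4$ fact (rather than order $2$ or $1$) that forces the transition maps to be parent-augmentations and yields the full free module with coefficient ring $\Z/4\Z$, rather than a smaller congruence kernel.
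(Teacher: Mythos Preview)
Your approach is correct and more self-contained than the paper's. The paper invokes a general formula from the companion article \cite{bartholdi-s-z:cspbg}, namely that the congruence kernel is $A[[\overline{G}/\overline{G_w}]]$ with $A=\bigcap_{n\ge0}\varphi^n(K/C)$, and then checks that $\varphi$ induces the non-trivial automorphism of $K/C\cong\Z/4\Z$ (so $A=\Z/4\Z$) and that the conjugation action on $K/C$ is trivial. You instead realise the kernel directly as $\varprojlim_n(\Z/4\Z)^{X^n}$ and analyse the transition maps by hand; this is more elementary and avoids the external reference, at the cost of rederiving a special case of that general machinery. The crucial finite verification you isolate --- that $\varphi(\b\c\d)=\d\b^a\c$ still generates $K/C$ --- is precisely the computation the paper performs. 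One small wrinkle: since $\varphi$ acts by \emph{inversion} on $K/C$, your transition map sends each child-generator to $-e_w$ rather than to $e_w$; this sign is harmless (absorb it by replacing $e_v$ with $(-1)^{|v|}e_v$), but you should state it accurately rather than asserting that the children map ``to the generator attached to their parent.''
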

\begin{proof}
We use the method presented in~\cite{bartholdi-s-z:cspbg} to compute the kernel.
It is proved that one can compute the congruence kernel as $A[[\overline{G}/\overline{G_w}]]$, where $A$ is the finite group $\bigcap_{n\ge0}\varphi^n(K/C)$ and $\overline{G_w}$ is the stablizer of a point of the boundary of the tree in $\overline{G}$.

$K/C$ is a cyclic group of order $4$, and $\varphi$ induces the non-trivial automorphism of this group.
Therefore $A\simeq\Z/4\Z$.
Next, the action of $G$ by conjugation on $K/[K,G]$ is clearly trivial.
Since $K/C$ is a quotient of that group, $G$ acts trivially on it.
Therefore the action of $\overline{G}$ on the congruence kernel is given by the canonical action on the boundary $X^\omega$ of $X^*$, and the kernel is $(\Z/4\Z)[[X^\omega]]$.
\end{proof}

\section{Germs}

We construct a homomorphism from $G$ to a finite group, which does not factor through $\aut X^n$ for any $n$.
More precisely, we give explicit functions $G\to\F_2$, where $\F_2$ is the finite field with two elements (endowed with the discrete topology), which are continuous for the profinite topology on $G$, but not for the congruence topology.
This yields some insights about the germs of the action on the boundary of the tree.

Consider the group $C$ of Proposition~\ref{prop:C} and write
$\Gamma=G/C$, a group of order $2^6$. Define, for all $n\in\N$, the
homomorphism $\pi_n:G\to\Gamma\wr\aut X^n$ as $\psi:G\to G\wr\aut X^n$
followed by the natural quotient map.

\begin{theorem}\label{thm:germs}
  Every epimorphism from $G$ onto a finite group factors through
  $\pi_n$ for all $n$ big enough.
\end{theorem}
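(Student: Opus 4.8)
The plan is to recognise the kernels $\ker\pi_n$ as a cofinal family of finite-index normal subgroups and then to invoke Proposition~\ref{prop:C cofinal}. Let $\alpha\colon G\to Q$ be an epimorphism onto a finite group and set $N=\ker\alpha$. Then $N$ is normal of finite index, and since $G$ is infinite, $N$ is non-trivial (it has finite index in the infinite group $G$). Recall that $\alpha$ factors through $\pi_n$ exactly when $\ker\pi_n\subseteq N$. So the statement reduces to identifying $\ker\pi_n$ and showing that it is eventually contained in $N$.

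First I would compute the kernel. By definition $\pi_n$ is $\psi_n\colon G\to G\wr\aut X^n$ followed by the map $G\wr\aut X^n\to\Gamma\wr\aut X^n$ induced coordinatewise by the quotient $G\to\Gamma=G/C$. Hence $\pi_n(g)=1$ precisely when $g\in\stab_G(n)$ and $g\at v\in C$ for every $v\in X^n$. I claim $\ker\pi_n=X^n*C$. The inclusion $X^n*C\subseteq\ker\pi_n$ is clear, as a typical generator $v*c$ with $c\in C$ fixes the $n$-th level and has state $c\in C$ at $v$ and trivial states at the other vertices of $X^n$. For the opposite inclusion, an element $g\in\stab_G(n)$ is, inside $\aut X^*$, the product $\prod_{v\in X^n}v*(g\at v)$ of the translates of its level-$n$ states; if every $g\at v$ lies in $C$, this realises $g$ as an element of $\prod_{v\in X^n}v*C=X^n*C$. (That $X^n*C$ is genuinely a subgroup of $G$ is already implicit in Proposition~\ref{prop:C cofinal}; it follows from $X*C\subseteq[K,G]$, established in Lemma~\ref{lem:[K,G] C}, together with the fact that $K$ is a branching subgroup.)

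Granting $\ker\pi_n=X^n*C$, the theorem follows at once. Proposition~\ref{prop:C cofinal} provides an $n_0$ with $X^n*C\subseteq N$ for all $n\ge n_0$; thus $\ker\pi_n\subseteq\ker\alpha$ for all such $n$, and $\alpha$ factors through $\pi_n$ for all $n$ big enough.

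I expect the one genuinely delicate point to be the reverse inclusion $\ker\pi_n\subseteq X^n*C$. It rests on the decomposition of the $n$-th level stabiliser of $\aut X^*$ as the direct product $\prod_{v\in X^n}v*\aut X^*$, so that $g\in\stab_G(n)$ is reconstructed from its states $g\at v$; the care needed is to check that reassembling these states (which individually lie in $C\le G$) yields an element that again lies in $G$, and not merely in $\aut X^*$. This is exactly what the subgroup property of $X^n*C$ guarantees. The remaining steps are bookkeeping over results already proved, so I do not anticipate further obstacles.
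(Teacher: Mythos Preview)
Your argument is correct and follows the same route as the paper: identify $\ker\pi_n=X^n*C$ and then apply Proposition~\ref{prop:C cofinal}. The only difference is cosmetic: the paper records the equality $\ker\pi_n=X^n*C$ as a consequence of Theorem~\ref{thm:G->Gamma}, whereas you unpack it directly from the definition $\Gamma=G/C$ together with the branching of $K\ge C$---your careful handling of the reverse inclusion (and of why $X^n*C$ sits inside $G$) is exactly the content the paper is invoking.
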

The explicit computation of the maps $\pi_n$ is a bit technical, and shall be given after the proof of Theorem~\ref{thm:G->Gamma}.
Recall that $N=\{1,a,\b,\c,\d\}$ is the nucleus of $G$; see Definition~\ref{def:contraction}.

\begin{lemma}\label{lem:contract}
For all $g,h\in G$, there is an integer $n$ such that $g\at{w}$, $h\at{w}$ and $(gh)\at{w}$ all belong to $N$ for all words $w$ of length at least $n$.
\end{lemma}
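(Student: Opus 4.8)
The plan is to deduce this directly from the contracting property of $G$ established in Lemma~\ref{lem:get-contract}, applied separately to the three group elements $g$, $h$, and their product $gh$, and then to combine the three resulting thresholds into a single one by taking a maximum.

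First I would recall that, by Lemma~\ref{lem:get-contract}, $G$ is contracting with nucleus $N=\{1,a,\b,\c,\d\}$. Unwinding Definition~\ref{def:contraction}, this means precisely that for \emph{each} element $f\in G$ there is an integer $n_f$ such that $f\at{w}\in N$ for every word $w$ with $\abs{w}\ge n_f$. The key observation is that the hypothesis concerns a fixed element of $G$, and here we have three fixed elements to handle: $g$, $h$, and $gh$. The last of these is again an element of $G$, so the contracting property applies to it on the same footing as to $g$ and $h$.

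Concretely, I would apply the contracting property three times to obtain integers $n_g$, $n_h$, and $n_{gh}$ with the property that $g\at{w}\in N$ for all $\abs{w}\ge n_g$, that $h\at{w}\in N$ for all $\abs{w}\ge n_h$, and that $(gh)\at{w}\in N$ for all $\abs{w}\ge n_{gh}$. Setting $n=\max\{n_g,n_h,n_{gh}\}$, any word $w$ of length at least $n$ simultaneously satisfies $\abs{w}\ge n_g$, $\abs{w}\ge n_h$, and $\abs{w}\ge n_{gh}$, whence $g\at{w}$, $h\at{w}$, and $(gh)\at{w}$ all lie in $N$, as required.

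There is no substantial obstacle here: the entire content of the lemma is that a uniform threshold can be chosen that works for all three states at once, and this is immediate because only finitely many elements are involved, so the maximum of their individual thresholds is a finite integer. I would note in passing that no uniformity across different pairs $g,h$ is claimed or needed; the integer $n$ is allowed to depend on $g$ and $h$. (This modest strengthening of pointwise contraction to a statement about a product is exactly what is required for the subsequent construction of the maps $\pi_n$, since one needs to read off the states of $g$, $h$, and $gh$ in the nucleus simultaneously in order to compare $\pi_n(g)\pi_n(h)$ with $\pi_n(gh)$.)
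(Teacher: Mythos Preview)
Your proof is correct and follows exactly the same approach as the paper: apply the contracting property separately to $g$, $h$, and $gh$ to obtain thresholds $n_g$, $n_h$, $n_{gh}$, and take $n=\max\{n_g,n_h,n_{gh}\}$.
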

\begin{proof}
By definition there is $n_g$ such that $g\at{w}$ is in $N$ for all $w$ of length at least $n_g$.
Similarly there is $n_h$ for $h$ and $n_{gh}$ for $gh$.
The claim obviously holds with $n=\max\{n_g,n_h,n_{gh}\}$.
\end{proof}

Define the permutation $\tau$ of the set $\{\b,\c,\d\}$ as
\begin{align*}
\b\mapsto\c,&&\c\mapsto\d,&&\d\mapsto\b,
\end{align*}
so that $\tau^3$ is the identity.
To each $\xi\in\{\b,\c,\d\}$ we associate a function $G\to\F_2$ defined by
\begin{align*}
[\xi](g)=\begin{cases}
1&\text{if }g = \xi,\\
0&\text{if }g\neq\xi\text{ but }g\in N,\\
\sum_{x\in X}[\xi^\tau](g\at{x})&\text{otherwise}.
\end{cases}
\end{align*}
It is clear that the relation
\begin{align}\label{eq:xi}
[\xi](g)=\sum_{w\in X^n}[\xi^{\tau^n}](g\at{w})
\end{align}
holds for all $\xi\in\{\b,\c,\d\}$, $g\in G$, and all $n\geq0$.

The restriction of the function $[\xi]$ to $N$ is the characteristic function of $\xi$.
We now prove that $[\xi](g)$ is the number (modulo $2$) of occurrences of $\xi$ in a decomposition of $g$ as a product of elements of $N$.
This statement is made precise in the following lemma.
\begin{lemma}\label{lem:homomorphisms}
Let $\xi$ be one of $\b,\c,\d$.
Then $\xi:G\to\F_2$ is a well-defined group homomorphism.
\end{lemma}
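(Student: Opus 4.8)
The plan is to show that the function $[\xi]:G\to\F_2$, defined recursively above, respects the group operation: $[\xi](gh)=[\xi](g)+[\xi](h)$ for all $g,h\in G$. Since $G$ is generated by $N$ and each $[\xi]$ restricts to the characteristic function of $\xi$ on $N$, establishing additivity would show that $[\xi]$ counts, modulo $2$, the occurrences of $\xi$ in any word representing $g$ — and in particular that this count is well-defined independently of the chosen word, which is the substance of the claim.

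\smallskip

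First I would invoke Lemma~\ref{lem:contract}: given $g,h\in G$, choose $n$ so large that $g\at{w}$, $h\at{w}$, and $(gh)\at{w}$ all lie in $N$ for every word $w$ of length $n$. Using the summation formula~\eqref{eq:xi} at level $n$, the identity $[\xi](gh)=[\xi](g)+[\xi](h)$ reduces to
\begin{align*}
\sum_{w\in X^n}[\xi^{\tau^n}]\bigl((gh)\at{w}\bigr)=\sum_{w\in X^n}\Bigl([\xi^{\tau^n}](g\at{w})+[\xi^{\tau^n}](h\at{w})\Bigr).
\end{align*}
At this depth all the relevant states lie in $N$, where $[\xi^{\tau^n}]$ is simply the characteristic function of $\xi^{\tau^n}$. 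So the claim becomes a purely combinatorial statement about how states multiply: for each $w$, the state $(gh)\at{w}$ is the product of $g$'s and $h$'s states along the path $w$ traverses, namely $(gh)\at{w}=g\at{w}\cdot h\at{w'}$ where $w'$ is the image of $w$ under the permutation induced by $g$. The key fact I would verify is that when $u,v\in N$ and their product $uv$ again lies in $N$, the characteristic count is additive: $[\eta](uv)=[\eta](u)+[\eta](v)$ for each $\eta\in\{\b,\c,\d\}$. This is a finite check, since $N$ has five elements, and it exploits that any product of two nucleus elements landing back in $N$ either leaves the $\{\b,\c,\d\}$-content unchanged or swaps a single generator for another consistently.

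\smallskip

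The role of the twisting permutation $\tau$ is essential and deserves care: the recursive definition shifts $\xi$ to $\xi^\tau$ upon descending one level, reflecting the way $\psi$ permutes the generators $\b,\c,\d$ among themselves. I would track this cyclic shift explicitly through the recursion so that the level-$n$ summation formula~\eqref{eq:xi} is applied with the correctly rotated index $\xi^{\tau^n}$ on both sides, ensuring the reduction to $N$ is coherent. The main obstacle will be precisely this bookkeeping of states and the accompanying permutation: one must confirm that the map $w\mapsto w'$ permutes $X^n$ (it is the action of $g$ on the level), so that reindexing the sum $\sum_{w}[\xi^{\tau^n}](h\at{w'})=\sum_{w}[\xi^{\tau^n}](h\at{w})$ is legitimate, and that the generator-level additivity holds uniformly across all the multiplication patterns arising in the nucleus. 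Once these finite verifications in $N$ are in place, additivity propagates up to $G$ and the lemma follows.
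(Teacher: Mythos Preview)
Your proposal is correct and follows essentially the same approach as the paper: descend to a level where all relevant states lie in the nucleus, use the decomposition $(gh)\at{w}=(g\at{w})(h\at{w^g})$, perform a finite additivity check in $N$, and reindex the sum via the permutation $w\mapsto w^g$. The paper organizes this slightly differently---it first reduces to $s\in N$ and takes $n$ a multiple of~$3$ so that $\xi^{\tau^n}=\xi$, then argues via the spine structure of $s$---but the content is the same; one small caveat is that your phrase ``swaps a single generator for another consistently'' is vacuous here (unlike in the original Grigorchuk group), since the only products $uv\in N$ with $u,v\in N$ are the trivial ones $u=1$, $v=1$, or $u=v$.
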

\begin{proof}
The group $G$ is contracting, therefore the computation of $[\xi](g)$ only involves a finite number of steps; hence $[\xi]$ is well-defined.

Note that another way to compute $[\xi](g)$ is as follows.
Choose a multiple $n$ of $3$ such that $g\at{w}$ is in $N$ for all $w\in X^n$.
Then $[\xi](g)$ is the number (modulo $2$) of words $w$ of length $n$ such that $g\at{w}=\xi$.
This is equivalent to the original definition of $[\xi]$ by Equation~\eqref{eq:xi}.

The fact that $[\xi]$ is a group homomorphism follows from the above observation.
Because $N$ generates $G$ it is sufficient to show that for all $s\in N$ and $g\in G$ the relation $[\xi](sg)=[\xi](s)+[\xi](g)$ holds.
In other words we must show that the following holds for all $s\in N$ and $g\in G$:
\begin{align*}
[\xi](sg)-[\xi](g)=\begin{cases}
1&\text{if }s=\xi,\\
0&\text{otherwise}.
\end{cases}
\end{align*}

Consider $s\in N$ and $g\in G$ and let $n$ be a multiple of $3$ big enough so that $s\at{w}$, $g\at{w}$ and $(sg)\at{w}$ are all in $N$ for all $w\in X^n$ (cf. Lemma~\ref{lem:contract}).
Note that we have the relation
\begin{align*}
(sg)\at{v}=(s\at{v})(g\at{v^s}).
\end{align*}
If $s\neq\xi$ it is clear that $s\at{w}$ is different from $\xi$ for all $w$ of length $n$, and therefore $[\xi](g)=[\xi](sg)$.
If $s=\xi$ then there is exactly one word $v$ of length $n$ such that $s\at{v}=\xi$.
Then $[\xi]((sg)\at{w})=[\xi](g\at{w^s})$ for all $w\neq v$ of length $n$, but we have $[\xi]((sg)\at{v})=[\xi](g\at{v^s})+1\pmod{2}$.
Therefore $[\xi](sg)=\sum_{w\in X^n}[\xi]((sg)\at{w})=[\xi](g)+1\pmod{2}$ in this case.
\end{proof}

We define the notation $[x\xi](g)=[\xi](g\at{x})$ for $x\in X$, $\xi\in\{\b,\c,\d\}$ and $g\in G$.
Also, we let $[\varnothing]:G\to\F_2$ be the canonical epimorphism $G\to G/\stab_G(1)$, after identifying $G/\stab_G(1)$ with the additive group of $\F_2$.

\begin{lemma}\label{lem:table1}
The following relations hold
\begin{equation*}
\begin{array}{|c|cccc|}
\hline
f&f(ag)-f(g)&f(\b g)-f(g)&f(\c g)-f(g)&f(\d g)-f(g)\\
\hline
[\varnothing]\cdot[\b]&[\b](g)&[\varnothing](g)&0&0\\
{[}\varnothing]\cdot[\c]&[\c](g)&0&[\varnothing](g)&0\\
{[}\varnothing]\cdot[\d]&[\d](g)&0&0&[\varnothing](g)\\
\hline
\end{array}
\end{equation*}
for all $g\in G$.
\end{lemma}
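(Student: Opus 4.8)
The plan is to exploit that both factors of $f=[\varnothing]\cdot[\xi]$ are group homomorphisms into $\F_2$: the map $[\varnothing]$ by its very definition as the quotient $G\to G/\stab_G(1)\cong\F_2$, and each $[\xi]$ by Lemma~\ref{lem:homomorphisms}. Consequently, although $f$ itself is not additive, it is \emph{bi-additive}: for any $s\in N$ and $g\in G$ one has
\[
f(sg)=\bigl([\varnothing](s)+[\varnothing](g)\bigr)\bigl([\xi](s)+[\xi](g)\bigr).
\]
First I would expand this product over $\F_2$ and subtract $f(g)=[\varnothing](g)[\xi](g)$; the diagonal term $[\varnothing](g)[\xi](g)$ then cancels, leaving the single master formula
\[
f(sg)-f(g)=[\varnothing](s)[\xi](s)+[\varnothing](s)[\xi](g)+[\varnothing](g)[\xi](s),
\]
valid for every $s\in N$ and $g\in G$.

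Next I would record the values of the two homomorphisms on the four generators. On the one hand, $a=\sigma$ is the unique generator acting nontrivially on the first level, so $[\varnothing](a)=1$ while $[\varnothing](\b)=[\varnothing](\c)=[\varnothing](\d)=0$. On the other hand, the restriction of $[\xi]$ to the nucleus $N$ is, as noted just before Lemma~\ref{lem:homomorphisms}, the characteristic function of $\xi$, so $[\xi](s)=1$ exactly when $s=\xi$ and $[\xi](s)=0$ for the other three generators.

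Finally I would substitute $s\in\{a,\b,\c,\d\}$ in turn into the master formula, using these values, to fill the twelve entries. For instance, taking $f=[\varnothing]\cdot[\b]$ and $s=a$ gives $[\varnothing](a)[\b](a)+[\varnothing](a)[\b](g)+[\varnothing](g)[\b](a)=0+[\b](g)+0=[\b](g)$, while $s=\b$ gives $0+0+[\varnothing](g)=[\varnothing](g)$ and $s\in\{\c,\d\}$ gives $0$; the rows for $[\c]$ and $[\d]$ come out identically after permuting the roles of the generators. There is essentially no obstacle here beyond bookkeeping: the only points demanding care are the non-homomorphic, bilinear nature of the product $f$, which forces the cross term $[\varnothing](s)[\xi](s)$ to be carried through the expansion, and the correct tabulation of $[\varnothing]$ and $[\xi]$ on the generators.
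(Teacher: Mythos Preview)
Your proof is correct and follows essentially the same approach as the paper's: both use that $[\varnothing]$ and $[\xi]$ are homomorphisms (the latter by Lemma~\ref{lem:homomorphisms}), expand the product $([\varnothing](s)+[\varnothing](g))([\xi](s)+[\xi](g))$, and read off the entries from the values on generators. Your version is just more systematically written out, deriving a single master formula before specializing, whereas the paper illustrates with one sample computation.
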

\begin{proof}
These are straightforward consequences of the definitions and of Lemma~\ref{lem:homomorphisms}.
For example, we have $([\varnothing]\cdot[\b])(ag)=([\varnothing](a)+[\varnothing](g))\cdot([\b](a)+[\b](g))=([\varnothing]\cdot[\b])(g)+[\b](g)$.
\end{proof}

\begin{lemma}
The relation
\begin{align*}
[x\xi](sg)=[x\xi](s)+[x\xi](g)+[\varnothing](s)\cdot[\xi^{\tau^{-1}}](g)
\end{align*}
holds for all $x\in X$, $\xi\in\{\b,\c,\d\}$ and $s,g\in G$.
\end{lemma}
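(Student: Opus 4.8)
The plan is to reduce everything to the state-composition rule $(sg)\at{x}=(s\at{x})(g\at{x^s})$ recorded just before the statement, together with the fact from Lemma~\ref{lem:homomorphisms} that each $[\xi]$ is a homomorphism $G\to\F_2$. First I would unfold the definition $[x\xi](sg)=[\xi]\bigl((sg)\at{x}\bigr)$, insert the state-composition rule, and use additivity of $[\xi]$ to split
\[
[x\xi](sg)=[\xi]\bigl((s\at{x})(g\at{x^s})\bigr)=[\xi](s\at{x})+[\xi](g\at{x^s})=[x\xi](s)+[\xi](g\at{x^s}).
\]
Thus the entire content of the lemma lies in evaluating the single summand $[\xi](g\at{x^s})$, and in particular in determining which letter $x^s$ is.

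Second, I would split according to the value of $[\varnothing](s)$, which by definition detects whether $s$ belongs to $\stab_G(1)$. If $[\varnothing](s)=0$ then $s$ fixes the first level, so $x^s=x$ and $[\xi](g\at{x^s})=[x\xi](g)$; this is exactly the claimed identity with vanishing correction term. If $[\varnothing](s)=1$ then $s$ interchanges the two letters, so $x^s=1-x$ and the summand becomes $[\xi](g\at{1-x})$, which I must show equals $[x\xi](g)+[\xi^{\tau^{-1}}](g)$.

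The crux is therefore the single relation
\[
[\xi](g\at{0})+[\xi](g\at{1})=[\xi^{\tau^{-1}}](g),
\]
after which the case $[\varnothing](s)=1$ follows by moving $[\xi](g\at{x})=[x\xi](g)$ to the other side, addition and subtraction being the same over $\F_2$. This relation is nothing but Equation~\eqref{eq:xi} at $n=1$, that is $[\xi](g)=\sum_{y\in X}[\xi^\tau](g\at{y})$, read after replacing $\xi$ by $\xi^{\tau^{-1}}$ and using $(\xi^{\tau^{-1}})^\tau=\xi$. The only delicate point is the direction of the twist: summing over the first level untwists $\tau$ by one step, which is exactly why the correction term carries $\xi^{\tau^{-1}}$ and not $\xi$ or $\xi^\tau$. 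Once this bookkeeping is settled the remaining verifications are purely mechanical, so I expect no real obstacle.
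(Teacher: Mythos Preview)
Your argument is correct and is essentially the paper's own proof: both start from $(sg)\at{x}=(s\at{x})(g\at{x^s})$ and the additivity of $[\xi]$ to reach $[x\xi](sg)=[x\xi](s)+[x^s\xi](g)$, then finish via the level-one instance of Equation~\eqref{eq:xi}, namely $[\xi^{\tau^{-1}}](g)=[0\xi](g)+[1\xi](g)$. The only difference is cosmetic: the paper states the conclusion in one line, while you spell out the two cases $[\varnothing](s)=0,1$ explicitly.
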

\begin{proof}
We compute $[x\xi](sg)=[\xi]((sg)\at{x})=[\xi]((s\at{x})(g\at{x^s}))=[\xi](s\at{x})+[\xi](g\at{x^s})=[x\xi](s)+[x^s\xi](g)$.
The claim follows from the relation $[\xi](g)=\sum_{x\in X}[x\xi^\tau](g)$.
\end{proof}
In the following we write $\bar x=1-x$.

\begin{corollary}\label{cor:a}
The relations $[x\xi](ag)=[\bar x\xi](g)$ and $[x\xi](ag)-[x\xi](g)=[\xi^{\tau^{-1}}](g)$ hold for all $g\in G$.
\end{corollary}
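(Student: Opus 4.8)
The plan is to derive both relations directly from the wreath decomposition of $a$ together with the summation identity~\eqref{eq:xi}, working throughout in $\F_2$, where subtraction coincides with addition.

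First I would establish the relation $[x\xi](ag)=[\bar x\xi](g)$ by computing the state of $ag$ at a first-level vertex. Since $a=\sigma$ satisfies $\psi(a)=(1,1)\sigma$, its states are trivial and it merely swaps the two top subtrees, so $x^a=\bar x$. Feeding this into the state formula $(sg)\at{v}=(s\at{v})(g\at{v^s})$ recorded before Lemma~\ref{lem:table1}, with $s=a$ and $v=x$, gives $(ag)\at{x}=(a\at{x})(g\at{x^a})=g\at{\bar x}$. The definition $[x\xi](g)=[\xi](g\at{x})$ then yields $[x\xi](ag)=[\xi](g\at{\bar x})=[\bar x\xi](g)$ at once.

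For the second relation I would combine the first with the identity~\eqref{eq:xi} specialized to level $n=1$. Working in $\F_2$ we have $[x\xi](ag)-[x\xi](g)=[\bar x\xi](g)+[x\xi](g)=\sum_{y\in X}[y\xi](g)$, since $\{x,\bar x\}=X$. Applying~\eqref{eq:xi} with $n=1$ to the symbol $\xi^{\tau^{-1}}$ and noting that $(\xi^{\tau^{-1}})^\tau=\xi$, one obtains $\sum_{y\in X}[y\xi](g)=[\xi^{\tau^{-1}}](g)$, which is exactly the asserted equality.

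I do not expect a genuine obstacle here: the argument is two short computations. The only points deserving care are that $a$ has trivial states, so that its first-level action is a pure transposition of $0$ and $1$, and the bookkeeping of the $\tau$-twist, which is handled cleanly by matching $(\xi^{\tau^{-1}})^\tau=\xi$ in the summation identity.
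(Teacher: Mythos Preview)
Your proposal is correct and follows essentially the same route as the paper. In the paper this result is stated as an immediate Corollary of the preceding Lemma, whose proof computes $[x\xi](sg)=[x\xi](s)+[x^s\xi](g)$ from the state formula and then combines it with the identity $[\xi](g)=\sum_{x\in X}[x\xi^\tau](g)$; your argument is precisely that computation specialized to $s=a$, using that $a$ has trivial states and $x^a=\bar x$, together with~\eqref{eq:xi} at level~$1$.
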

\begin{corollary}\label{cor:bcd}
The relation $[x\xi](sg)=[x\xi](g)+[x\xi](s)$ holds for all $s\in\{\b,\c,\d\}$ and all $g\in G$.
\end{corollary}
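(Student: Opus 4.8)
The plan is to deduce this directly from the general product formula
\[[x\xi](sg)=[x\xi](s)+[x\xi](g)+[\varnothing](s)\cdot[\xi^{\tau^{-1}}](g)\]
established in the lemma immediately preceding Corollary~\ref{cor:a}. Comparing that identity with the one claimed here, the only discrepancy is the correction term $[\varnothing](s)\cdot[\xi^{\tau^{-1}}](g)$. Thus the entire task reduces to showing that this term vanishes whenever $s\in\{\b,\c,\d\}$, and it clearly suffices to check that $[\varnothing](s)=0$ for each such $s$.

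First I would unwind the definition of $[\varnothing]$: it is the canonical epimorphism $G\to G/\stab_G(1)$, identified with the additive group of $\F_2$. Hence $[\varnothing](s)=0$ exactly when $s$ fixes the first level of the tree, i.e. when $s\in\stab_G(1)$. Now each of $\b,\c,\d$ does stabilize the first level: their wreath decompositions $\psi(\b)=(\c,a)$, $\psi(\c)=(a,\d)$, $\psi(\d)=(1,\b)$ all carry the trivial root permutation, so $\b,\c,\d\in\stab_G(1)$ and therefore $[\varnothing](\b)=[\varnothing](\c)=[\varnothing](\d)=0$.

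Substituting $[\varnothing](s)=0$ into the product formula annihilates the correction term and leaves $[x\xi](sg)=[x\xi](s)+[x\xi](g)$, which is precisely the assertion. I do not expect any genuine obstacle: all the real content lives in the preceding lemma, and this statement is recorded separately only to isolate the clean additive behaviour that survives once $s$ is restricted to the level-one stabilizing generators. The contrast with Corollary~\ref{cor:a} is instructive and worth flagging in the writeup: there $s=a=\sigma$ acts nontrivially on the first level, so $[\varnothing](a)=1$ and the correction term is genuinely present, which is exactly why the two cases are split off as separate corollaries of the same formula.
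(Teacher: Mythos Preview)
Your proposal is correct and matches the paper's intended argument: the corollary is stated without proof precisely because it is the immediate specialization of the preceding lemma to the case $[\varnothing](s)=0$, which holds for $s\in\{\b,\c,\d\}$ since these generators lie in $\stab_G(1)$.
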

\begin{corollary}\label{cor:table2}
The following relations hold
\begin{equation*}
\begin{array}{|c|cccc|}
\hline
f&f(ag)-f(g)&f(\b g)-f(g)&f(\c g)-f(g)&f(\d g)-f(g)\\
\hline
[1\c]&[\b](g)&0&0&0\\
{[}0\d]&[\c](g)&0&0&0\\
{[}0\b]&[\d](g)&0&0&0\\
\hline
\end{array}
\end{equation*}
for all $g\in G$.
\end{corollary}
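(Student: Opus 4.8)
The plan is to read off all twelve entries directly from Corollaries~\ref{cor:a} and~\ref{cor:bcd}, so that the only genuine input is the list of first-level states of $\b,\c,\d$. I would split the table into two blocks: the three columns indexed by $s\in\{\b,\c,\d\}$, and the single column indexed by $a$.

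For the columns indexed by $s\in\{\b,\c,\d\}$, Corollary~\ref{cor:bcd} gives $[x\xi](sg)-[x\xi](g)=[x\xi](s)=[\xi](s\at{x})$ immediately, so the task reduces to evaluating $[\xi](s\at{x})$ for the three functions in the table. Using the recursion $\psi(\b)=(\c,a)$, $\psi(\c)=(a,\d)$, $\psi(\d)=(1,\b)$, the relevant states are, for $[1\c]$: $\b\at{1}=a$, $\c\at{1}=\d$, $\d\at{1}=\b$; for $[0\d]$: $\b\at{0}=\c$, $\c\at{0}=a$, $\d\at{0}=1$; and for $[0\b]$ the same states $\c,a,1$. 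Since $[\xi]$ restricted to $N$ is the characteristic function of $\xi$, and in each case the state lies in $N$ but differs from $\xi$, all nine values vanish; this settles the three right-hand columns.

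For the column indexed by $a$, I would invoke the second identity of Corollary~\ref{cor:a}, namely $[x\xi](ag)-[x\xi](g)=[\xi^{\tau^{-1}}](g)$, which does not depend on $x$. It then remains only to compute $\xi^{\tau^{-1}}$ for $\xi\in\{\c,\d,\b\}$. Inverting $\tau$ gives $\tau^{-1}\colon\b\mapsto\d,\ \c\mapsto\b,\ \d\mapsto\c$, so that $\c^{\tau^{-1}}=\b$, $\d^{\tau^{-1}}=\c$, and $\b^{\tau^{-1}}=\d$. Substituting yields the first-column entries $[\b](g)$, $[\c](g)$, $[\d](g)$, matching the table.

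I do not expect a real obstacle: the statement is a bookkeeping consequence of the two preceding corollaries. The only point demanding care is inverting $\tau$ correctly and pairing each function $[x\xi]$ with the intended generator $\xi$, so that the $\tau^{-1}$-shift appearing in the $a$-column lands on the correct letter.
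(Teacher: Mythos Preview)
Your proposal is correct and matches the paper's intended argument: the corollary is stated without proof precisely because it is an immediate bookkeeping consequence of Corollaries~\ref{cor:a} and~\ref{cor:bcd}, exactly as you unpack it. The computations of the states and of $\tau^{-1}$ are all accurate.
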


Define recursively the following functions

\begin{align*}
\begin{split}
f_1(g)&=[0\d](g)+[1\c](g)+[0\d](g)\cdot [1\d](g)+\sum_{x\in X}([x\b](g)\cdot [\bar x\d](g)+f_2(g\at{x})),
\end{split}\\
f_2(g)&=[0\b](g)+[1\c](g)+[0\c](g)\cdot [1\c](g)
+\sum_{x\in X}f_3(g\at{x}),\\
\begin{split}
f_3(g)&=[0\d](g)+[1\c](g)+[0\c](g)\cdot [1\c](g)\\
&\quad+\sum_{x\in X}([x\b](g)\cdot [\bar x\c](g)+[x\b](g)\cdot [\bar x\d](g)+f_1(g\at{x})).
\end{split}
\end{align*}
\begin{lemma}
The functions $f_i:G\to\F_2$ are well-defined for all $i\in\{1,2,3\}$.
\end{lemma}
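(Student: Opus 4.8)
The plan is to split each $f_i$ into a \emph{local} part $L_i$, which depends only on the first-level states of its argument, and a \emph{recursive} part that sums $f_{\rho(i)}$ over the children, where $\rho$ is the $3$-cycle $1\mapsto2\mapsto3\mapsto1$. Thus one writes
\begin{align*}
f_i(g)=L_i(g)+\sum_{x\in X}f_{\rho(i)}(g\at{x}),
\end{align*}
with $L_1=[0\d]+[1\c]+[0\d]\cdot[1\d]+\sum_x[x\b]\cdot[\bar x\d]$, and $L_2,L_3$ read off the displayed formulas in the same way. Each $L_i$ is a genuine everywhere-defined function $G\to\F_2$, since the $[\xi]$ and hence the $[x\xi]$ are well-defined by Lemma~\ref{lem:homomorphisms}; the only thing needing justification is that iterating the recursive part terminates.

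First I would unfold the recursion $m$ times, obtaining
\begin{align*}
f_i(g)=\sum_{n=0}^{m-1}\sum_{w\in X^n}L_{\rho^n(i)}(g\at{w})+\sum_{w\in X^m}f_{\rho^m(i)}(g\at{w}).
\end{align*}
The heart of the argument is the finite computational claim that $L_i$ vanishes identically on the nucleus $N$ for every $i\in\{1,2,3\}$. Since $[\xi]|_N$ is the characteristic function of $\xi$, one has $[x\xi](s)=1$ iff $s\at{x}=\xi$; reading off the first-level states from $\psi(\b)=(\c,a)$, $\psi(\c)=(a,\d)$, $\psi(\d)=(1,\b)$, with $a$ and $1$ having trivial states, one checks that the three linear terms $[0\d]$, $[1\c]$, $[0\b]$ vanish on all of $N$ (no nucleus element has the required state), and that each product term $[0\d]\cdot[1\d]$, $[0\c]\cdot[1\c]$, $[x\b]\cdot[\bar x\c]$, $[x\b]\cdot[\bar x\d]$ vanishes on $N$ because in each product one factor is already $0$ on $N$, the only remaining case $[1\b]\cdot[0\c]$ being zero since $[1\b]$ and $[0\c]$ have disjoint support (on $\d$ and $\b$ respectively). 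Hence $L_i|_N=0$ for all $i$.

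I would then invoke contraction (Lemma~\ref{lem:get-contract}): for any $g$ there is an $m$ with $g\at{w}\in N$ whenever $|w|\ge m$. For such an $m$ every summand $L_{\rho^n(i)}(g\at{w})$ with $n\ge m$ vanishes, so the full unfolding collapses to the finite sum $\sum_{n\ge0}\sum_{w\in X^n}L_{\rho^n(i)}(g\at{w})$, which has only finitely many nonzero terms. Taking this as the definition of $f_i(g)$, increasing $m$ only adds vanishing terms, so the value is independent of $m$; and an index shift shows the resulting function satisfies the stated recursion with base value $f_i|_N=0$, which is consistent precisely because $L_i|_N=0$.

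The main obstacle is conceptual rather than computational: the recursion as written has no base case, and the system it imposes on $N$ is in fact underdetermined (the homogeneous equations $f_i(s)=\sum_x f_{\rho(i)}(s\at{x})$ on $N$ have a nontrivial solution space). The crux is therefore the verification that $L_i|_N=0$, since this is exactly what lets us close the recursion by declaring $f_i$ to vanish on the nucleus; without it the slogan ``contracting, hence a finite computation'' used for the $[\xi]$ would not apply here.
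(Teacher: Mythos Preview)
Your proposal is correct and follows the same approach as the paper's proof: unfold the recursion into an infinite sum of local terms $L_{\rho^n(i)}(g\at{w})$, verify that each $L_i$ vanishes on the nucleus $N$, and invoke contraction to conclude that only finitely many summands are nonzero. The paper compresses all of this into the single sentence ``it is easy to see that $f_i(g)=0$ for all $g\in N$ because all the terms of these infinite sums vanish,'' whereas you actually carry out the term-by-term check on $N$ and make explicit the conceptual point that the bare recursion has no base case and is underdetermined on $N$ until one imposes $f_i|_N=0$---a choice that is consistent precisely because $L_i|_N=0$.
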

\begin{proof}
This follows again form the fact that $G$ is contracting.
Indeed, each of the functions $f_i$ consists of infinitely many terms.
On the other hand it is easy to see that $f_i(g)=0$ for all $g\in N$ because all the terms of these infinite sums vanish.
Therefore the evaluation of $f_i(g)$ only involves finitely many non-zero terms.
\end{proof}

\begin{proposition}\label{prop:f_i}
The following relations hold
\begin{equation*}
\begin{array}{|c|cccc|}
\hline
f&f(ag)-f(g)&f(\b g)-f(g)&f(\c g)-f(g)&f(\d g)-f(g)\\
\hline
f_1&[\b](g)+[\c](g)&[\c](g)+[\d](g)&0&[\c](g)\\
f_2&[\b](g)+[\d](g)&[\d](g)&[\b](g)+[\d](g)&0\\
f_3&[\b](g)+[\c](g)&0&[\b](g)&[\b](g)+[\c](g)\\
\hline
\end{array}
\end{equation*}
for all $g\in G$.
\end{proposition}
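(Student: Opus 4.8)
The plan is to verify all twelve identities of the table simultaneously, by induction on the \emph{contraction depth} $d(g)=\min\{n:g\at{w}\in N\text{ for all }w\in X^*\text{ with }\abs{w}\ge n\}$, which is finite because $G$ is contracting (Lemma~\ref{lem:get-contract}). Since $N$ is self-similar one has $d(g)=0$ exactly when $g\in N$, while for $g\notin N$ and each $x\in X$ one has $d(g\at{x})\le d(g)-1<d(g)$; so the base case of the induction is $g\in N$, and in the inductive step the first-level states $g\at{x}$ fall under the induction hypothesis.

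For the inductive step I would expand $f_i(sg)$ through its defining recursion and treat the three kinds of summands separately. The \emph{linear} level-one terms ($[0\d]+[1\c]$ for $f_1$ and $f_3$, and $[0\b]+[1\c]$ for $f_2$) are differentiated directly by Corollary~\ref{cor:table2}: their derivative vanishes under $\b,\c,\d$ and equals a single symbol $[\xi](g)$ under $a$. The \emph{quadratic} terms are differentiated by a Leibniz rule over $\F_2$, namely $u(sg)v(sg)-u(g)v(g)=u(g)v'+u'v(g)+u'v'$ with $u'=u(sg)-u(g)$, fed by Corollary~\ref{cor:a} for $s=a$ (where each atom $[x\xi]$ has its index flipped, $[x\xi](ag)=[\bar x\xi](g)$) and by Corollary~\ref{cor:bcd} for $s\in\{\b,\c,\d\}$ (where $u'=[x\xi](s)=[\xi](s\at{x})$ is a \emph{constant}, read off from the wreath decomposition of $s$). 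Finally, the \emph{recursive} terms are handled through $(sg)\at{x}=(s\at{x})(g\at{x^s})$: for $s=a$ the substitution $x\mapsto\bar x$ merely permutes the sum $\sum_x f_j(g\at{\bar x})$, contributing nothing, whereas for $s\in\{\b,\c,\d\}$ (which fix the first level) each summand becomes $f_j\big((s\at{x})\,g\at{x}\big)-f_j(g\at{x})$, that is, a derivative of $f_j$ under the generator $s\at{x}$ at $g\at{x}$, supplied by the induction hypothesis. Reassembling the resulting first-level atoms into the symbols $[\b],[\c],[\d]$ via $[\xi](g)=[0\xi^\tau](g)+[1\xi^\tau](g)$ (Equation~\eqref{eq:xi} with $n=1$) should reproduce exactly the entries of the table.

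The base case $g\in N$ is a finite verification: since $N$ is self-similar and $G$ is contracting, each value $f_i(sg)$ with $s$ a generator and $g\in N$ unfolds to a concrete element of $\F_2$ after finitely many recursion steps, and one checks the five elements of $N$ against the three functions and four generators directly; this is the sort of bounded check we are happy to delegate to a computer.

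The main obstacle I anticipate is conceptual rather than computational: the recursion is genuinely self-referential, since differentiating $f_1$ calls on the derivatives of $f_2$ (and $f_2$ on $f_3$, and $f_3$ on $f_1$) one level down, so the twelve identities can only be established together, and the induction on $d$ must be anchored so that every appeal to the hypothesis at a state $g\at{x}$ is well-founded. A secondary nuisance is that, unlike in the untwisted Grigorchuk case, the three functions are \emph{not} exact $\tau$-rotations of one another, so I do not expect to shortcut the bookkeeping by symmetry; each of the twelve boxes will have to be matched individually after the reassembly step.
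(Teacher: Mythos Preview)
Your proposal is correct and follows essentially the same route as the paper: handle the $a$-column directly (linear terms via Corollary~\ref{cor:table2}, while quadratic and recursive terms are invariant under the swap $x\mapsto\bar x$), and establish the remaining nine entries by a simultaneous induction that unwinds each $f_i(sg)-f_i(g)$ into a derivative of $f_{i+1}$ at a first-level state. The only refinement the paper adds is to the base case: rather than computing $f_i(sg)$ for all $g\in N$, it invokes Lemma~\ref{lem:contract} to reduce to the situation $s,g,sg\in N$, which forces $g=1$ or $s=g$ and makes the verification trivial by hand.
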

\begin{proof}
We begin with the first column, using Corollary~\ref{cor:a}.
The quadratic terms of each $f_i$ contribute to nothing in $f_i(ag)-f_i(g)$.
Moreover, $\sum_{x\in X}f_i((ag)\at{x})=\sum_{x\in X}f_i(\at{x^a})=\sum_{x\in X}f_i(\at{x})$, and therefore this term also gives no contribution.
The linear terms clearly yield the result.

We use Corollary~\ref{cor:bcd} to prove the other nine relations.
The proof goes by induction, using the fact that $G$ is contracting.
We compute $f_1(\b g)-f_1(g)$ as follows.
We note that $[x\xi](\b)=0$ unless $x=0$ and $\xi=\c$.
Therefore we have
\begin{align*}
f_1(\b g)-f_1(g)=f_2(\c\cdot g\at{0})-f_2(g\at{0})+f_2(a\cdot g\at{1})-f_2(g\at{1}).
\end{align*}
Now we already computed $f_2(a\cdot g\at{1})-f_2(g\at{1})=([\b]+[\d])(g\at{1})=([1\b]+[1\d])(g)$.
Thus we get
\begin{align*}
f_1(\b g)-f_1(g)&=[1\b](g)+[1\d](g)+f_2(\c\cdot g\at{0})-f_2(g\at{0})\\
&=[\d](g)+[\c](g)+f_2(\c\cdot g\at{0})-f_2(g\at{0})-[\b](g\at{0})-[\d](g\at{0}).
\end{align*}
This is exactly what we wanted, if the relation $f_2(\c g)-f_2(g)=[\b](g)+[\d](g)$ holds.
In the same way, one shows that the latter holds provided $f_3(\d g)-f_3(g)=[\b](g)+[\c](g)$ holds.
This, in turn, holds if $f_1(\b g)-f_1(g)=[\c](g)+[\d](g)$ holds, and we are back where we started.
For the other relations, the induction step follows in a similar same way.

Thus, we are proving all the relations in a cyclic manner.
For the basis of the induction it is enough to consider the case $g\in N$.
Better, we only need to compute $f_i(sg)-f_i(g)$ for all $s,g\in N$ so that $sg$ is in $N$ (cf. Lemma~\ref{lem:contract}).
Now $f_i(sg)-f_i(g)$ is obviously $0$ if $g$ is trivial, because $f_i(s)=0$ for all $s\in N$.
The only case left is $s=g$, and it is clear that $f_i(sg)-f_i(g)$ also vanishes in this situation.
\end{proof}

\begin{theorem}\label{thm:G->Gamma}
Let $\Gamma$ be the nilpotent group of order $2^6$ given by the following presentation:
\begin{align*}
\Gamma=\langle a,b,c,d\mid a^2,b^2,c^2,d^2,[a,b]^2,[a,c]^2,[a,b]=[b,c]=[c,d]=[d,b],[a,d]=[a,b][a,c]\rangle,
\end{align*}
and consider the map $\pi:G\to\Gamma$ defined by $\pi(g)=a^{[\varnothing](g)}b^{[\b](g)}c^{[\c](g)}d^{[\d](g)}[a,b]^{e(g)}[a,c]^{f(g)}$ with $e(g)=(f_3+[\varnothing]\cdot([\b]+[\d])+[0\b]+[0\d])(g)$ and $f(g)=([\varnothing]\cdot([\c]+[\d])+[0\b]+[0\d])(g)$.
Then $\pi$ is a surjective group homomorphism.
Moreover, the kernel of $\pi$ is $C$.
\end{theorem}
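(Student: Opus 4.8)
The plan is to establish the homomorphism property first, and then read off surjectivity and the kernel. Since each constituent function $[\varnothing],[\b],[\c],[\d],f_3,[0\b],[0\d]$ is already known to be well defined on $G$ (Lemmas~\ref{lem:homomorphisms} and the well-definedness of the $f_i$), the formula for $\pi$ defines a genuine function $G\to\Gamma$, where I read the right-hand side in the normal form $a^{*}b^{*}c^{*}d^{*}[a,b]^{*}[a,c]^{*}$. This is legitimate because $\Gamma$ has class $2$ with central derived subgroup $\Gamma'=\langle[a,b],[a,c]\rangle\cong(\Z/2\Z)^2$ and $\Gamma/\Gamma'\cong(\Z/2\Z)^4$, so these $64$ words biject with $\Gamma$. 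Because $N$ generates $G$, it suffices to prove $\pi(sg)=\pi(s)\pi(g)$ for all $s\in\{a,\b,\c,\d\}$ and all $g\in G$; an induction on the length of a word representing the left factor then yields the general case. I would first evaluate $\pi$ on the generators: substituting each $s\in\{a,\b,\c,\d\}$ into the definitions (using $a\at0=1$, $\b\at0=\c$, $\c\at0=a$, $\d\at0=1$) gives $e(s)=f(s)=0$, so $\pi(a)=a$, $\pi(\b)=b$, $\pi(\c)=c$, $\pi(\d)=d$.

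Writing $\pi(g)=a^{A}b^{B}c^{C}d^{D}[a,b]^{E}[a,c]^{F}$ with $A=[\varnothing](g),B=[\b](g),C=[\c](g),D=[\d](g),E=e(g),F=f(g)$, the abelian exponents $A,B,C,D$ transform correctly of their own accord, since $[\varnothing],[\b],[\c],[\d]$ are homomorphisms; so all the content lies in the two commutator exponents. Pushing the single leading letter $\pi(s)$ rightward into normal form and collecting commutators through the relations $[b,c]=[c,d]=[d,b]=[a,b]$ and $[a,d]=[a,b][a,c]$ shows exactly which corrections $e(sg)-e(g)$ and $f(sg)-f(g)$ must realize. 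For $s=a$ no commutator is created, so I must check $e(ag)=e(g)$ and $f(ag)=f(g)$. For $s=\b$ the targets are $e(\b g)-e(g)=[\varnothing](g)$ and $f(\b g)-f(g)=0$; for $s=\c$ they are $e(\c g)-e(g)=[\b](g)$ and $f(\c g)-f(g)=[\varnothing](g)$; and for $s=\d$ they are $e(\d g)-e(g)=[\varnothing](g)+[\b](g)+[\c](g)$ and $f(\d g)-f(g)=[\varnothing](g)$.

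Each identity I verify by expanding $e$ and $f$ term by term and substituting the single-letter difference tables: Corollary~\ref{cor:a} for the $a$-column, Lemma~\ref{lem:table1} for the $[\varnothing]\cdot[\xi]$ summands, Corollary~\ref{cor:bcd} for the state summands $[0\b],[0\d]$ (whose values on $\b,\c,\d$ I read off from the first-level states above), and Proposition~\ref{prop:f_i} for the $f_3$ summand, all sums taken modulo $2$. This bookkeeping is the crux of the argument and the place where I expect the real obstacle to lie: the somewhat opaque shape of $e$ and $f$ is engineered precisely so that the residual commutator terms cancel in $\F_2$, and it is the six nontrivial cases ($s\in\{\b,\c,\d\}$) that carry the weight, the $s=a$ case being a quick pair of cancellations.

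Once $\pi$ is known to be a homomorphism, surjectivity is immediate, since $\im\pi$ contains $a,b,c,d$. For the kernel, note that $\Gamma$ has class $2$, so $\pi$ kills $\gamma_3(G)$ and factors through $G/\gamma_3(G)$; combined with $\card\Gamma=2^6$ and surjectivity this gives $[G:\ker\pi]=64=[G:C]$ by Proposition~\ref{prop:C}. Hence it suffices to prove $C\le\ker\pi$. By Propositions~\ref{prop:[K,G]} and~\ref{prop:C} the group $C$ is generated by $\gamma_3(G)$ together with $[a,\b\c\d]$ and $[a,\b][\b,\c]$; since $\gamma_3(G)\le\ker\pi$ already, I only need $\pi([a,\b\c\d])=1$ and $\pi([a,\b][\b,\c])=1$. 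Using that $\pi$ is a homomorphism and the relations of $\Gamma$, the first equals $[a,b][a,c][a,d]=([a,b][a,c])^2=1$ and the second equals $[a,b][b,c]=[a,b]^2=1$. Therefore $C\le\ker\pi$, and the equality of finite indices forces $\ker\pi=C$.
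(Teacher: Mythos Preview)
Your proof is correct and follows essentially the same approach as the paper's: verify $\pi(sg)=\pi(s)\pi(g)$ for $s\in\{a,\b,\c,\d\}$ by computing the difference table for $e$ and $f$ from Lemma~\ref{lem:table1}, the corollaries on $[x\xi]$, and Proposition~\ref{prop:f_i}, then conclude by surjectivity and an index comparison with $C$. Your write-up is in fact more explicit than the paper's in two places---you derive the target values for $e(sg)-e(g)$ and $f(sg)-f(g)$ from the commutator collection in $\Gamma$, and you spell out $C\le\ker\pi$ via the explicit generators from Propositions~\ref{prop:[K,G]} and~\ref{prop:C}---but the strategy is identical.
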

\begin{proof}
The identity is mapped to the identity under $\pi$.
Thus it is enough to show that $\pi$ is multiplicative.
Using Lemma~\ref{lem:table1}, Corollary~\ref{cor:table2} and Proposition~\ref{prop:f_i}, it is straightforward to check that we have the relations
\begin{equation*}
\begin{array}{|c|cccc|}
\hline
\phi&\phi(ag)-\phi(g)&\phi(\b g)-\phi(g)&\phi(\c g)-\phi(g)&\phi(\d g)-\phi(g)\\
\hline
e&0&[\varnothing](g)&[\b](g)&([\b]+[\c]+[\varnothing])(g)\\
f&0&0&[\varnothing](g)&[\varnothing](g)\\
\hline
\end{array}
\end{equation*}
for all $g\in G$.
Therefore $\pi$ is a group homomorphism, since $a,\b,\c,\d$ generate $G$ as a semigroup.

One can check that $\pi$ is surjective, and that $C$ is mapped to the trivial group.
Since $C$ has index $2^6$ in $G$, the theorem is proved.
\end{proof}

\begin{proof}[Proof of Theorem~\ref{thm:germs}]
By the previous theorem, the kernel of $\pi_n$ is $X^n*C$.
Any finite-index subgroup contains $X^n*C$ for all $n$ big enough, by Proposition~\ref{prop:C cofinal}.
\end{proof}

\section{The Lower Central Series}

Let $\gamma_1=G$ and $\gamma_{n+1}=[G,\gamma_n]$ be the terms of the lower central series of $G$.
\begin{proposition}\label{prop:lcs}
We have, for all $n\geq3$:
\begin{align*}
\gamma_{2n-1}&=\gamma_n\times\gamma_n,&
\gamma_{2n}&=\{(g,g^{-1}):g\in\gamma_n\}\gamma_{2n+1}.
\end{align*}
\end{proposition}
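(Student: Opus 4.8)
The plan is to prove the two identities simultaneously, as a single induction on $n$, by reducing everything to two elementary consequences of the wreath recursion. First I would record that $\gamma_n\le K\le\stab_G(1)$ for all $n\ge3$ (since $\gamma_3\le K$ by Proposition~\ref{prop:K contains gamma_3}, and the generators of $K$ fix the first level), so that $\psi$ is defined on every $\gamma_m$ that occurs, and that the expressions $\gamma_n\times\gamma_n$ and $\set{(g,g^{-1}):g\in\gamma_n}$ are to be read through $\psi$, their $\psi$-preimages lying in $K$ because $X*K\le K$. The two identities I would isolate are: for $x,y\in\stab_G(1)$ one has $\psi([x,y])=([x\at{0},y\at{0}],[x\at{1},y\at{1}])$, while for $x\in\stab_G(1)$ one has $\psi([x,a])=((x\at{0})^{-1}(x\at{1}),\,(x\at{1})^{-1}(x\at{0}))$, the latter because $a$ merely interchanges the two top subtrees.

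The engine of the induction is then the observation that the two formulas feed into each other through $\gamma_{m+1}=[\gamma_m,G]$. Assuming $\gamma_{2n-1}=\gamma_n\times\gamma_n$, I would compute $\gamma_{2n}=[\gamma_{2n-1},G]$ with the two identities: commutators with $\b,\c,\d\in\stab_G(1)$ act diagonally and land in $[\gamma_n,G]\times[\gamma_n,G]=\gamma_{n+1}\times\gamma_{n+1}$, whereas the commutator with $a$ produces exactly the antidiagonal $\set{(g,g^{-1}):g\in\gamma_n}$. That one obtains the \emph{full} product $\gamma_{n+1}\times\gamma_{n+1}$, and not merely a subdirect subgroup, is where recurrence enters: the fibres $1*\gamma_n$ and $\gamma_n*1$ already lie in $\gamma_{2n-1}$, and commuting $1*\gamma_n$ with elements of $\stab_G(1)$ whose first-level state ranges over all of $G$ (possible since $\stab_G(v)\at{v}=G$) yields $1*[\gamma_n,G]=1*\gamma_{n+1}$, and symmetrically. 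This gives the second identity. Feeding it back, $\gamma_{2n+1}=[\gamma_{2n},G]$ is computed the same way; the only new phenomenon is that commuting the antidiagonal $(g,g^{-1})$ with $a$ yields $(g^{-2},g^{2})$, which is absorbed into $\gamma_{n+1}\times\gamma_{n+1}$ as soon as $g^{2}\in\gamma_{n+1}$, giving $\gamma_{2n+1}=\gamma_{n+1}\times\gamma_{n+1}$, i.e. the first identity at $n+1$.

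Thus I must carry along, inside the same induction, the fact that each quotient $\gamma_m/\gamma_{m+1}$ has exponent $2$. This is self-propagating: the computations above identify $\gamma_{2n-1}/\gamma_{2n}$ and $\gamma_{2n}/\gamma_{2n+1}$, as abelian groups, with $\gamma_n/\gamma_{n+1}$ (via $(x\at{0},x\at{1})\mapsto\overline{(x\at{0})(x\at{1})}$ and $g\mapsto(g,g^{-1})$ respectively), so exponent $2$ at index $m$ follows from exponent $2$ at index $\approx m/2$. Iterating halves the index down to the first few terms, which together with the case $n=3$ of the first identity constitute the base of the induction; these finitely many statements I would verify in a nilpotent quotient $G/\gamma_c$ of the L-presentation of Theorem~\ref{thm:presentation}, matching the ranks of the relevant sections with NQL.

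The main obstacle is the completeness of the coordinatewise generation, i.e. guaranteeing that the diagonal and fibre commutators exhaust $\gamma_{n+1}\times\gamma_{n+1}$ rather than a proper subdirect product; this is exactly where recurrence and the prior knowledge of the fibres $1*\gamma_n$, $\gamma_n*1$ must be combined carefully. A secondary delicate point is that the exponent-$2$ bookkeeping cannot be deferred, since it is precisely what absorbs the square terms $g^{\pm2}$ in the passage from the even to the odd identity; consequently the two assertions and the exponent-$2$ property have to be established in one interlocking induction resting on a correctly chosen, computationally verified base case.
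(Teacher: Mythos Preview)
Your outline captures the upper-bound half of the argument correctly, and your treatment of the step $\gamma_{2n-1}\to\gamma_{2n}$ is fine. The genuine gap is in the passage $\gamma_{2n}\to\gamma_{2n+1}$, specifically the lower bound $\gamma_{2n+1}\supseteq\gamma_{n+1}\times\gamma_{n+1}$, which does \emph{not} follow ``the same way''. Your recurrence trick, applied to $1*\gamma_{n+1}\subseteq\gamma_{2n}$, only produces $1*\gamma_{n+2}$, one step too deep. What remains is to manufacture $1*\gamma_{n+1}$ from commutators of the antidiagonal with $\stab_G(1)$; but for $g\in\gamma_n$ one gets $[(g,g^{-1}),\b]=([g,\c],[g^{-1},a])$, $[(g,g^{-1}),\c]=([g,a],[g^{-1},\d])$, $[(g,g^{-1}),\d]=(1,[g^{-1},\b])$, and modulo the antidiagonal of $\gamma_{n+1}$ together with $\gamma_{n+2}\times\gamma_{n+2}$ these contribute only the images of $[g,\b]$, $[g,\c]+[g,a]$ and $[g,a]+[g,\d]$ in $\gamma_{n+1}/\gamma_{n+2}$. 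That span need not be all of $\gamma_{n+1}/\gamma_{n+2}$: there is no way to isolate $[g,a]$ from these combinations alone.

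The paper closes this gap with an extra parity statement that must be carried through the induction (its Lemmas~\ref{lem:lcs1} and~\ref{lem:lcs GAP}): for $g\in\gamma_n$ one has either $[g,a]\in\gamma_{n+2}$ or $[g,\b],[g,\c],[g,\d]\in\gamma_{n+2}$, according to the parity of $n$, with base cases $[\gamma_3,\{\b,\c,\d\}]\le\gamma_5$ and $[\gamma_4,a]\le\gamma_6$ checked by machine. With this ``skip'' available, one of the two coordinates in each paired commutator above already lies in $\gamma_{n+2}$, so the elements $([g,s],1)$ are recovered for every generator $s$ and the inclusion $\gamma_{n+1}\times\gamma_{n+1}\subseteq\gamma_{2n+1}$ follows. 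This parity phenomenon is precisely the missing idea in your plan; it has to be formulated and proved alongside the two identities. As a minor aside, the exponent-$2$ property of $\gamma_m/\gamma_{m+1}$ needs no parallel induction: it holds immediately because $G$ is generated by involutions.
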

We first prove two lemmas and then the proposition.
Define $\bar{\gamma}_{2n-1}=\gamma_n\times\gamma_n$ and $\bar{\gamma}_{2n}=\{(g,g^{-1}):g\in\gamma_n\}\bar\gamma_{2n+1}$ for $n\geq3$.
The groups $\bar{\gamma}_k$ are subgroups of $G$, because $K$ contains $\gamma_3$ (see Proposition~\ref{prop:K contains gamma_3}).
Notice that the groups $\bar\gamma_k$ form a descending filtration in $G$.

\begin{lemma}\label{lem:lcs1}
We have the inclusion $[\bar\gamma_k,G]\subseteq\bar\gamma_{k+1}$ for $k\geq5$.
More precisely, for all $h\in\stab_G(1)$ we have $[\bar\gamma_k,h]\subseteq\bar\gamma_{k+2}$ and $[\bar\gamma_k,a]\subseteq\bar\gamma_{k+1}$ if $k$ is odd, and $[\bar\gamma_k,h]\subseteq\bar\gamma_{k+1}$ and $[\bar\gamma_k,a]\subseteq\bar\gamma_{k+2}$ if $k$ is even.
\end{lemma}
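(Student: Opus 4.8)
The plan is to establish the four refined inclusions by pushing elements of $\bar\gamma_k$ through the wreath recursion $\psi$, and then to deduce the coarse inclusion $[\bar\gamma_k,G]\subseteq\bar\gamma_{k+1}$ from them. Since $G$ acts transitively on $X$, we have $G=\langle a\rangle\stab_G(1)$, and each $\bar\gamma_j$ is normal in $G$ (conjugation by $\stab_G(1)$ fixes the two factors of $\gamma_n\times\gamma_n$ modulo $\gamma_{n+1}$, while conjugation by $a$ swaps them). Hence, once $[\bar\gamma_k,a]$ and $[\bar\gamma_k,h]$ are shown to lie in $\bar\gamma_{k+1}$ for every $h\in\stab_G(1)$, the identity $[x,ah]=[x,h][x,a]^h$ together with normality of $\bar\gamma_{k+1}$ gives $[x,g]\in\bar\gamma_{k+1}$ for all $g\in G$, i.e. $[\bar\gamma_k,G]\subseteq\bar\gamma_{k+1}$.

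For $k=2n-1$ odd, an element $x\in\bar\gamma_k=\gamma_n\times\gamma_n$ has $\psi(x)=(g_0,g_1)$ with $g_0,g_1\in\gamma_n$. For $h\in\stab_G(1)$ with $\psi(h)=(h_0,h_1)$, the restriction of $\psi$ to $\stab_G(1)$ is a homomorphism into $G\times G$, so $\psi([x,h])=([g_0,h_0],[g_1,h_1])\in\gamma_{n+1}\times\gamma_{n+1}=\bar\gamma_{k+2}$, using $[\gamma_n,G]=\gamma_{n+1}$. Conjugation by $a$ swaps coordinates, so $\psi([x,a])=(g_0^{-1}g_1,g_1^{-1}g_0)=(g,g^{-1})$ with $g=g_0^{-1}g_1\in\gamma_n$, which lies in $\{(g,g^{-1}):g\in\gamma_n\}\subseteq\bar\gamma_{k+1}$.

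For $k=2n$ even, since $\bar\gamma_{2n+1}$ is normal and $\gamma_n/\gamma_{n+1}$ is abelian, every $x\in\bar\gamma_k$ can be written with $\psi(x)=(gh_0,g^{-1}h_1)$ for some $g\in\gamma_n$ and $h_0,h_1\in\gamma_{n+1}$. The homomorphism property again gives $\psi([x,h])=([gh_0,h_0'],[g^{-1}h_1,h_1'])\in\gamma_{n+1}\times\gamma_{n+1}=\bar\gamma_{k+1}$. For conjugation by $a$ one computes $\psi([x,a])=(h_0^{-1}g^{-2}h_1,\,h_1^{-1}g^2h_0)$; the product of the two coordinates collapses to the identity, so this equals $(u,u^{-1})$ with $u=h_0^{-1}g^{-2}h_1$, and it remains to verify $u\in\gamma_{n+1}$ in order to conclude $[x,a]\in\{(g',g'^{-1}):g'\in\gamma_{n+1}\}\subseteq\bar\gamma_{k+2}$.

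The only delicate point — and the place I expect the proof could go wrong — is this last verification: since $h_0,h_1\in\gamma_{n+1}$, it reduces to showing $g^2\in\gamma_{n+1}$ for every $g\in\gamma_n$, i.e. that $\gamma_n/\gamma_{n+1}$ has exponent $2$. I would isolate this as a preliminary fact, proved once for all $n$: because $G$ is generated by the involutions $a,\b,\c,\d$, its abelianization $G/\gamma_2$ has exponent $2$, and in the associated graded Lie ring $\bigoplus_m\gamma_m/\gamma_{m+1}$ the bracket is $\Z$-bilinear while the whole ring is generated in degree $1$; thus $2x=0$ in degree $1$ forces $2[x,\,\cdot\,]=[2x,\,\cdot\,]=0$, and by induction every graded component is killed by $2$. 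This yields $g^2\in\gamma_{n+1}$ and closes the even case. Throughout, the hypothesis $n\ge3$ (so that $\gamma_n\subseteq\gamma_3\subseteq K$, and one-sided translates of elements of $\gamma_{n+1}$ genuinely lie in $G$ by Proposition~\ref{prop:get branch}) is what guarantees that each expression written above is an actual element of $G$.
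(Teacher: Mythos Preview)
Your proof is correct and follows essentially the same approach as the paper: push through $\psi$, use that $[\gamma_n,G]=\gamma_{n+1}$ for the $\stab_G(1)$ commutators, and use that $\gamma_n/\gamma_{n+1}$ has exponent $2$ (since $G$ is generated by involutions) for the commutator with $a$ in the even case. The only organizational difference is that in the even case you write a general element of $\bar\gamma_{2n}$ as $(gh_0,g^{-1}h_1)$ and compute directly, whereas the paper treats the diagonal representative $(g,g^{-1})$ first and then invokes the already-proved odd case for the $\bar\gamma_{2n+1}$ part; both lead to the same computation.
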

\begin{proof}
Suppose $k$ is odd and write $k=2n-1$.
Consider $g=(g_0,g_1)\in\gamma_n\times\gamma_n=\bar\gamma_k$ and $h=(h_0,h_1)\in\stab_G(1)$.
Then obviously $[g,h]=([g_0,h_0],[g_1,h_1])\in\bar\gamma_{2n+1}$ and $[g,a]=(g_0^{-1}g_1,(g_0^{-1}g_1)^{-1})\in\bar\gamma_{2n}$.

Suppose $k$ is even and write $k=2n$.
Consider $g\in\gamma_n$ and $h=(h_0,h_1)\in\stab_G(1)$.
Then obviously $[(g,g^{-1}),h]=([g,h_0],[g^{-1},h_1])$ is in $\bar\gamma_{2n+1}$.
Next, since $G$ is generated by involutions, the quotients in the lower central series are elementary Abelian $2$-groups.
This implies that $g^2\in\gamma_{n+1}$ and thus $[(g,g^{-1}),a]=(g^{-2},g^2)\in\bar\gamma_{2n+2}$.
Notice that we already proved the inclusions $[\bar\gamma_{2n+1},h]\subseteq\bar\gamma_{2n+1}$ and $[\bar\gamma_{2n+1},a]\subseteq\bar\gamma_{2n+2}$, so we are done.
\end{proof}

\begin{lemma}\label{lem:lcs2}
For all $n\geq3$, we have the inclusion $[\bar\gamma_{2n-1},G]\supseteq\bar\gamma_{2n}$; for all $n\geq3$, if the equality $\bar\gamma_k=\gamma_k$ holds for all $5\leq k\leq n+2$, then we have the inclusion $[\bar\gamma_{2n},G]\supseteq\bar\gamma_{2n+1}$.
\end{lemma}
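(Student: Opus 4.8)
The plan is to read both inclusions off the wreath decomposition $\psi$, commuting the relevant $\bar\gamma_k$ with the two ``types'' of elements of $G$: the root permutation $a=\sigma$, which swaps the two coordinates, and the elements of $\stab_G(1)$, which act coordinatewise. Recurrence (that $\stab_G(1)\at x=G$) lets me realize arbitrary states, and the inclusion $\gamma_3\subseteq K$ keeps all the $\bar\gamma_k$ honest subgroups throughout.

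For the first (unconditional) inclusion $[\bar\gamma_{2n-1},G]\supseteq\bar\gamma_{2n}$: since $\bar\gamma_{2n-1}=\gamma_n\times\gamma_n$ is a full direct product, for $g\in\gamma_n$ the element $(g,1)$ lies in $\bar\gamma_{2n-1}$, and $[(g,1),a]=(g,1)^{-1}(g,1)^a=(g^{-1},g)$; letting $g$ run over $\gamma_n$ produces the whole antidiagonal $\{(h,h^{-1}):h\in\gamma_n\}$. Commuting instead with $\stab_G(1)$ and using the freedom to set either coordinate to $1$, the commutators $([g_0,h_0],1)$ with $g_0\in\gamma_n$ and $h_0$ running over $G$ (by recurrence) generate $\gamma_{n+1}\times1$, and symmetrically $1\times\gamma_{n+1}$; together these give $\bar\gamma_{2n+1}$. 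As $\bar\gamma_{2n}$ is generated by the antidiagonal together with $\bar\gamma_{2n+1}$, the inclusion follows.

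For the second (conditional) inclusion I first reduce modulo $\bar\gamma_{2n+2}$: applying the first inclusion with $n+1$ in place of $n$ gives $[\bar\gamma_{2n+1},G]\supseteq\bar\gamma_{2n+2}$, and since $\bar\gamma_{2n+1}\subseteq\bar\gamma_{2n}$ this yields $\bar\gamma_{2n+2}\subseteq[\bar\gamma_{2n},G]$; hence it suffices to prove $\bar\gamma_{2n+1}\subseteq[\bar\gamma_{2n},G]\,\bar\gamma_{2n+2}$. Now $\bar\gamma_{2n+1}/\bar\gamma_{2n+2}$ is identified, via the difference map $(v,w)\mapsto v+w$, with $V=\gamma_{n+1}/\gamma_{n+2}$ (the diagonal coming from $\bar\gamma_{2n+2}$ is killed). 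The essential difficulty is that $\bar\gamma_{2n}=\{(g,g^{-1})\}\bar\gamma_{2n+1}$ is only an antidiagonal coset space, not a direct product, so I may no longer set a coordinate to $1$. Commuting $(g,g^{-1})$ with $(h_0,h_1)\in\stab_G(1)$ gives the coupled element $([g,h_0],[g^{-1},h_1])$, whose class in $V$ is $\mu(g,h_0+h_1)$, where $\mu:(\gamma_n/\gamma_{n+1})\times(G/\gamma_2)\to V$ is the commutator pairing; commuting with $a$ produces $(g^{-2},g^2)$, which lands in the diagonal and contributes nothing. Reading off the wreath images of the Schreier generators $\b,\c,\d,\b^a,\c^a,\d^a$ of $\stab_G(1)$, the available difference directions $h_0+h_1$ span exactly $S=\langle\b,a\c,a\d\rangle\le G/\gamma_2$, so the image of $[\bar\gamma_{2n},G]$ in $V$ equals $\mu(\gamma_n/\gamma_{n+1},S)$.

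The crux, and the main obstacle, is therefore to prove $\mu(\gamma_n/\gamma_{n+1},S)=V$, i.e.\ $[\gamma_n,\langle\b,a\c,a\d\rangle]\,\gamma_{n+2}=\gamma_{n+1}$. This is genuinely not automatic: $S$ is only $3$-dimensional and misses the direction of $a$ (equivalently of $\c$ or $\d$), while pure decoupling through elements $(s,1)\in\stab_G(1)$ would see only commutators with the $2$-dimensional $\langle\b,\c\d\rangle$; thus the coupling is essential, yet one short direction must still be recovered from the internal structure of $\mu$. This is exactly where the hypothesis is used: under $\bar\gamma_k=\gamma_k$ for $5\le k\le n+2$ the terms $\gamma_n,\gamma_{n+1},\gamma_{n+2}$ are known in their self-similar (wreath) form, so $\mu$ can be computed by the recursion and the required surjectivity reduces to a finite $\F_2$-linear verification, the finitely many low values of $n$ being checked directly in the nilpotent quotient with NQL. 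Granting this, the image of $[\bar\gamma_{2n},G]$ in $\bar\gamma_{2n+1}/\bar\gamma_{2n+2}$ is everything, and combined with $\bar\gamma_{2n+2}\subseteq[\bar\gamma_{2n},G]$ we obtain $\bar\gamma_{2n+1}\subseteq[\bar\gamma_{2n},G]$, as desired.
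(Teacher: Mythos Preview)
Your first inclusion and the reduction of the second to the quotient $\bar\gamma_{2n+1}/\bar\gamma_{2n+2}$ are correct and match the paper. Your reformulation via the sum map $(v,w)\mapsto v+w$ and the pairing $\mu:(\gamma_n/\gamma_{n+1})\times(G/\gamma_2)\to\gamma_{n+1}/\gamma_{n+2}$ is an equivalent repackaging of what the paper does coordinatewise with the three explicit commutators
\[
[(g,g^{-1}),\b]=([g,\c],[g^{-1},a]),\quad[(g,g^{-1}),\c]=([g,a],[g^{-1},\d]),\quad[(g,g^{-1}),\d]=(1,[g^{-1},\b]),
\]
and your identification of the available directions as $S=\langle\b,a\c,a\d\rangle$ is accurate.

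The gap is precisely at your crux. You correctly isolate the statement $\mu(\gamma_n/\gamma_{n+1},S)=V$, but ``$\mu$ can be computed by the recursion and the required surjectivity reduces to a finite $\F_2$-linear verification'' is not a proof; you have not said what the recursion is, why it closes up, or what the finite check would be. The paper's mechanism is the parity alternation of Lemma~\ref{lem:lcs1} (with Lemma~\ref{lem:lcs GAP} supplying $n=3,4$): under the hypothesis $\bar\gamma_k=\gamma_k$ one gets $[\gamma_n,\stab_G(1)]\subseteq\gamma_{n+2}$ for $n$ odd and $[\gamma_n,a]\subseteq\gamma_{n+2}$ for $n$ even. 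In your language this says that either $\mu(\cdot,\b)=\mu(\cdot,\c)=\mu(\cdot,\d)=0$, whence $\mu(\cdot,a\c)=\mu(\cdot,a\d)=\mu(\cdot,a)$ and $S$ hits everything through the single surviving direction $a$; or $\mu(\cdot,a)=0$, whence $\mu(\cdot,a\c)=\mu(\cdot,\c)$, $\mu(\cdot,a\d)=\mu(\cdot,\d)$ and $S$ effectively becomes $\{\b,\c,\d\}$. Either way $\mu(\gamma_n/\gamma_{n+1},S)=\mu(\gamma_n/\gamma_{n+1},G/\gamma_2)=V$. This is exactly the ``one short direction recovered from the internal structure of $\mu$'' that you allude to; it is the substance of the argument and needs to be stated, not gestured at.
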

\begin{proof}
It is clear that $[\bar\gamma_{2n-1},G]$ contains $[(1,g),a]=(g,g^{-1})$ for all $g\in\gamma_n$.
Since $G$ is recurrent, we have $[\bar\gamma_{2n-1},G]\supseteq\gamma_{n+1}\times\gamma_{n+1}=\bar\gamma_{2n+1}$.
Therefore $[\bar\gamma_{2n-1},G]$ contains $\bar\gamma_{2n}$ and we are done for the first statement.

Suppose now $\bar\gamma_k=\gamma_k$ holds for all $5\leq k\leq n+2$.
By definition, $\bar\gamma_{2n}$ contains $\bar\gamma_{2n+1}$.
Therefore $[\bar\gamma_{2n},G]$ contains $[\bar\gamma_{2n+1},G]\supseteq\bar\gamma_{2n+2}\supseteq\gamma_{n+2}\times\gamma_{n+2}$.
We now prove that, modulo $\gamma_{n+2}\times\gamma_{n+2}$, the group $[\bar\gamma_{2n},G]$ contains $([g,s],1)$ for all $g\in\gamma_n$ and all $s\in\{a,\b,\c,\d\}$, and this will conclude the proof.
A direct computation shows that $[\bar\gamma_{2n},G]$ contains the elements
\begin{align*}
[(g,g^{-1}),\b]&=([g,\c],[g^{-1},a]),&[(g,g^{-1}),\c]&=([g,a],[g^{-1},\d]),&[(g,g^{-1}),\d]&=(1,[g^{-1},\b])
\end{align*}
for all $g\in\gamma_n$.
Using Lemmas~\ref{lem:lcs1} and~\ref{lem:lcs GAP}, we see that either $[g,a]$ or $[g^{-1},\b],[g,\c],[g^{-1},\d]$ are in $\gamma_{n+2}$, so we are done.
\end{proof}

\begin{proof}[Proof of Proposition~\ref{prop:lcs}]
We need to prove the equality $\bar\gamma_k=\gamma_k$ for all $k\geq5$.
Note that if we prove this for all $5\leq k\leq n$, then Lemmata~\ref{lem:lcs1} and~\ref{lem:lcs2} show that $[\bar\gamma_k,G]=\bar\gamma_{k+1}$ for all $5\leq k\leq2n-3$.
In turn, this implies $\bar\gamma_k=\gamma_k$ for all $5\leq k\leq 2n-2$.
Therefore, since $2n-2>n$ for $n\geq5$, we only need to prove the case $k=5$.
This is done using GAP in Lemma~\ref{lem:gamma5}.
\end{proof}

\begin{remark}
The same proposition, with the same proof, holds for the Grigorchuk group.
The only change is in Lemma~\ref{lem:gamma5}, where one needs to replace the presentation and the endomorphism by those from Equation~\eqref{eq:lpres Grigorchuk}.
\end{remark}

It is convenient to introduce some notation.
For a set $S\subseteq\aut X^*$, we define $\Delta*S=\{(g,g^{-1})\mid g\in S\}$ and $0*S=\{(g,1)\mid g\in S\}$.
We then write $\Delta\Delta*S$ instead of $\Delta*(\Delta*S)$ and so on.

\begin{corollary}\label{cor:lcs}
The group $G$ has finite width. More precisely,
$\gamma_n/\gamma_{n+1}$ is a $(\Z/2\Z)$-vector space, and
\[\operatorname{rank}(\gamma_n/\gamma_{n+1}) = \begin{cases}
  2 & \text{ if }\frac342^i+1\le n\le 2^i\text{ for some }i\in\N,\\
  4 & \text{ otherwise.}\end{cases}\]
\end{corollary}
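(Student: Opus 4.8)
The plan is to extract from Proposition~\ref{prop:lcs} a recursion for the ranks $r_n:=\operatorname{rank}(\gamma_n/\gamma_{n+1})$ and then to solve it by induction. That each $\gamma_n/\gamma_{n+1}$ is a $\Z/2\Z$-vector space is immediate, since $G$ is generated by involutions and hence every lower central quotient is an elementary abelian $2$-group. The finite-width assertion will follow the moment the recursion shows $r_n\le4$ for all $n$, so the real content is the precise value of $r_n$.

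First I would establish the recursion $r_{2n-1}=r_{2n}=r_n$ for $n\ge3$, equivalently $r_m=r_{\ceil{m/2}}$ for $m\ge5$. Fix $n\ge3$ and look at the chain $\gamma_{2n+1}\le\gamma_{2n}\le\gamma_{2n-1}$. Proposition~\ref{prop:lcs} gives $\gamma_{2n-1}=\gamma_n\times\gamma_n$ and $\gamma_{2n+1}=\gamma_{n+1}\times\gamma_{n+1}$, so $\gamma_{2n-1}/\gamma_{2n+1}\cong(\gamma_n/\gamma_{n+1})^2$ has dimension $2r_n$. The middle term $\gamma_{2n}=\Delta*\gamma_n\cdot\gamma_{2n+1}$ projects in $(\gamma_n/\gamma_{n+1})^2$ onto the classes of the $(g,g^{-1})$; but the quotient is a $2$-group, so $g^{-1}\equiv g$ and this image is exactly the diagonal, of dimension $r_n$. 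Hence $\operatorname{rank}(\gamma_{2n}/\gamma_{2n+1})=r_n$ while $\operatorname{rank}(\gamma_{2n-1}/\gamma_{2n})=2r_n-r_n=r_n$, which yields both halves of the recursion at once.

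It then remains to supply base data and to solve. I would compute $r_1,r_2,r_3,r_4$ directly: $r_1=4$ because the abelianization of $G$ is $(\Z/2\Z)^4$, every relator other than $a^2,\b^2,\c^2,\d^2$ being a single commutator, as is its image under each power of $\tilde\varphi$; and $r_2=r_3=4$, $r_4=2$ by reading ranks off a nilpotent quotient computed with NQL. With these values a strong induction on $n$ finishes the proof: for $n\le4$ it is the base computation, and for $n\ge5$ one invokes $r_n=r_{\ceil{n/2}}$ and checks that the stated formula is compatible with this identity. The bookkeeping is that the map $m\mapsto\ceil{m/2}$ carries the block $[2^{i-1}+1,2^i]$ onto $[2^{i-2}+1,2^{i-1}]$ and, because $\tfrac34 2^{i-1}$ is an integer for $i\ge3$, sends the rank-$2$ sub-block $[\tfrac34 2^i+1,2^i]$ exactly onto $[\tfrac34 2^{i-1}+1,2^{i-1}]$ and the complementary rank-$4$ sub-block onto its analogue; thus the two-valued pattern simply reproduces itself one scale down.

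The recursion is short to derive and the induction solving it is elementary; the one genuinely delicate point is the base-case analysis---pinning down $r_2,r_3,r_4$, and in particular confirming that the rank first drops to $2$ exactly at $n=4$---which is where the explicit nilpotent-quotient computation appears unavoidable, and which I expect to be the main obstacle.
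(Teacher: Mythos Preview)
Your proposal is correct and follows essentially the same route as the paper: derive from Proposition~\ref{prop:lcs} the recursion $r_{2n-1}=r_{2n}=r_n$ for $n\ge3$, compute the initial ranks $r_1,\dots,r_4=4,4,4,2$ (the paper does this in Lemma~\ref{lem:explicit lcs} via NQL), and propagate. Your dimension count in $(\gamma_n/\gamma_{n+1})^2$ and your bookkeeping with $m\mapsto\ceil{m/2}$ make explicit what the paper leaves as ``immediate'' and as a displayed pattern of ranks, but the argument is the same.
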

\begin{proof}
Proposition~\ref{prop:lcs} can be rewritten as
\begin{align*}
\gamma_{2n-1}&=X*\gamma_n,&
\gamma_{2n}&=(\Delta*\gamma_n)(X*\gamma_{n+1}),
\end{align*}
for all $n\geq3$.
From this it is immediate that
\begin{align*}
\gamma_{2n}/\gamma_{2n+1}\simeq\gamma_{2n-1}/\gamma_{2n}\simeq\gamma_n/\gamma_{n+1}
\end{align*}
holds for all $n\geq3$.
Therefore $G$ has finite width.
Even more is true: if $S_n$ is a minimal generating set for $\gamma_n$ modulo $\gamma_{n+1}$, then $0*S_n$ is a minimal generating set for $\gamma_{2n-1}$ modulo $\gamma_{2n}$, and $\Delta*S_n$ is a minimal generating set for $\gamma_{2n}$ modulo $\gamma_{2n+1}$.
Therefore we obtain the following sequence
\begin{align*}
S_1,S_2,&S_3,S_4,\\
&0*S_3,\Delta*S_3,0*S_4,\Delta*S_4,\\
&00*S_3,\Delta 0*S_3,0\Delta*S_3,\Delta\Delta*S_3,00*S_4,\Delta 0*S_4,0\Delta*S_4,\Delta\Delta*S_4,\dots
\end{align*}
of minimal generating sets for $\gamma_n$ (modulo $\gamma_{n+1}$).
Explicitly, we compute the rank and a minimal generating set for $\gamma_n/\gamma_{n+1}$ in Lemma~\ref{lem:explicit lcs}, for $1\le n\le 4$.
It follows that the rank of $\gamma_n/\gamma_{n+1}$ is given by
\begin{align*}
4,4,\;4,2,\;4,4,2,2,\;4,4,4,4,2,2,2,2,\;\dots
\end{align*}
\end{proof}

\section{The Schur Multiplier}\label{sec:Schur}

In this section we compute the second homology group of $G$, also known as the Schur multiplier $M(G)=H_2(G,\Z)$.
Since it has infinite rank, the group $G$ cannot be finitely presented.

Let $\Gamma$ be the group given by the presentation $\langle a,\b,\c,\d\mid a^2,\b^2,\c^2,\d^2\rangle$ and let $\Omega$ be the kernel of the map $\Gamma\to G$ sending generators to generators.
Let $\varphi$ be the endomorphism of $\Gamma$ induced by the one in Theorem~\ref{thm:presentation}.
Write $r_1^{(0)}=[\d^a,\d]$, $r_2^{(0)}=[\d,\c^a\b]$, $r_3^{(0)}=[\d,(\c^a\b)^\c]$ and $r_4^{(0)}=[\c^a\b,\c\b^a]$, and $r_i^{(n)}=\varphi^n(r_i^{(0)})$ for $n\geq0$.
Then the elements $r_i^{(n)}$ form a set of normal generators of $\Omega$ in $\Gamma$.

Using the ``five-term homology sequence"~\cite{robinson:ctg} applied on the extension $1\to\Omega\to\Gamma\to G\to1$, we get the following exact sequence:
\begin{align*}
M(\Gamma)\to M(G)\to\Omega/[\Gamma,\Omega]\to\Gamma/\Gamma'\to G/G'\to1.
\end{align*}
Now $\Gamma/\Gamma'\to G/G'$ is an isomorphism, and the group $\Gamma$ is isomorphic to a free product of four copies of $\Z/2\Z$, whence $M(\Gamma)=0$.
Thus we have an isomorphism $M(G)\simeq\Omega/[\Gamma,\Omega]$.

We shall write $M=M(G)=\Omega/[\Gamma,\Omega]$ and denote by $\overline{r}$ the image in $M$ of an element $r\in\Omega$.

\begin{lemma}\label{lem:schur exponent 2}
The group $M$ is an elementary Abelian $2$-group.
\end{lemma}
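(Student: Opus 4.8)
The plan is to reduce everything to a single robust observation: if an element $r\in\Omega$ is conjugate in $\Gamma$ to its own inverse, then its image $\overline r$ is $2$-torsion in $M$. Indeed, by the very definition of $M=\Omega/[\Gamma,\Omega]$, conjugation by any $g\in\Gamma$ acts trivially on $M$, since $r^g=r\cdot[r,g]$ with $[r,g]\in[\Gamma,\Omega]$; and $M$ is abelian because $[\Omega,\Omega]\subseteq[\Gamma,\Omega]$. Hence if $r^g=r^{-1}$ for some $g\in\Gamma$, then $\overline r=\overline{r^g}=\overline{r^{-1}}=-\overline r$, so $2\overline r=0$. Since the elements $r_i^{(n)}$ normally generate $\Omega$ and $M$ carries the trivial $\Gamma$-action, their images $\overline{r_i^{(n)}}$ generate $M$ as an abelian group; thus it suffices to produce, for each $r_i^{(n)}$, an element of $\Gamma$ conjugating it to its inverse.

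First I would dispose of the base relators $r_i^{(0)}$. For a commutator whose first entry is an involution $x$ (so $x^2=1$ in $\Gamma$) one has $[x,y]^x=[y,x]=[x,y]^{-1}$; this immediately handles $r_1^{(0)}=[\d^a,\d]$ (conjugate by the involution $\d^a$), as well as $r_2^{(0)}=[\d,\c^a\b]$ and $r_3^{(0)}=[\d,(\c^a\b)^\c]$ (conjugate by $\d$). The remaining relator $r_4^{(0)}=[\c^a\b,\c\b^a]$ is the only delicate case, since neither entry is an involution. The key computation is that $\c\b^a=(\c^a\b)^a$ in $\Gamma$ (using $a^2=1$), so writing $u=\c^a\b$ we have $r_4^{(0)}=[u,u^a]$; conjugating by $a$ then gives $(r_4^{(0)})^a=[u^a,u]=(r_4^{(0)})^{-1}$. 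Thus each $r_i^{(0)}$ is $\Gamma$-conjugate to its inverse.

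Finally I would propagate this through the endomorphism. Since $\varphi$ is a homomorphism of $\Gamma$, an equality $(r_i^{(0)})^{g_i}=(r_i^{(0)})^{-1}$ is carried by $\varphi^n$ to $(r_i^{(n)})^{\varphi^n(g_i)}=(r_i^{(n)})^{-1}$, with $\varphi^n(g_i)\in\Gamma$. Applying the opening observation to every $r_i^{(n)}$ yields $2\overline{r_i^{(n)}}=0$, and as these images generate $M$, we conclude that $M$ is an elementary abelian $2$-group.

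I expect the main obstacle to be spotting the identity $\c\b^a=(\c^a\b)^a$ that rewrites $r_4^{(0)}$ as $[u,u^a]$: the first three relators invert cheaply because each is built on a visible involution, whereas $r_4^{(0)}$ requires recognizing that its two entries are $a$-conjugate before the inversion trick applies. Everything else is formal, resting only on the triviality of the $\Gamma$-action on $M$ and the compatibility of $\varphi$ with conjugation.
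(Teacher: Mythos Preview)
Your proof is correct and follows the same approach as the paper: reduce to showing each relator $r_i^{(n)}$ is $\Gamma$-conjugate to its inverse, and verify this by direct computation for the four base relators (the paper uses exactly the same conjugating elements $\d$, $\d$, and $a$ for $r_2^{(0)}$, $r_3^{(0)}$, $r_4^{(0)}$, and uses $a$ rather than your $\d^a$ for $r_1^{(0)}$). Your explicit $\varphi$-propagation step and the rewriting $r_4^{(0)}=[u,u^a]$ make the argument a bit more transparent than the paper's terse version, which simply asserts that checking $i=1,2,3,4$ suffices.
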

\begin{proof}
Since $\Omega\leq\Gamma$, the quotient $\Omega/[\Gamma,\Omega]$ is clearly Abelian.
Moreover, the images of the elements $r_i^{(0)}$ and of their iterates under $\varphi$ generate $M$.
Therefore it is enough to show that $r_i$ is conjugate to its inverse for $i=1,2,3,4$.
A direct computation yields
\begin{align*}
[\d,\d^a]&=[\d^a,\d]^a,&[\c^a\b,\d]&=[\d,\c^a\b]^\d,\\
[(\c^a\b)^\c,\d]&=[\d,(\c^a\b)^\c]^\d,&[\c\b^a,\c^a\b]&=[\c^a\b,\c\b^a]^a,
\end{align*}
thus $M$ is an elementary Abelian $2$-group.
\end{proof}

Let $\psi:\Gamma\to\Gamma\wr C_2$ be the lift of the wreath decomposition of $G$ defined by
\begin{align*}
\psi(a)&=\sigma,&\psi(\b)&=(\c,a),&\psi(\c)&=(a,\d),&\psi(\d)&=(1,\b).
\end{align*}

\begin{lemma}
We have the inclusion $\psi([\Gamma,\Omega])\leq([\Gamma,\Omega]\times[\Gamma,\Omega])\{(r,r^{-1})\mid r\in\Omega\}$.
\end{lemma}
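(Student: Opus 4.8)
The plan is to first show that the set on the right-hand side,
\[
T=([\Gamma,\Omega]\times[\Gamma,\Omega])\{(r,r^{-1})\mid r\in\Omega\},
\]
is in fact a \emph{subgroup} of $\Gamma\times\Gamma\leq\Gamma\wr C_2$, so that verifying the inclusion on a generating set of $[\Gamma,\Omega]$ suffices. For this I would use that $\Omega$ is central modulo $[\Gamma,\Omega]$: for every $g\in\Gamma$ and $\omega\in\Omega$ the commutator $[g,\omega]$ lies in $[\Gamma,\Omega]$, so the image $\bar\Omega$ of $\Omega$ in $\bar\Gamma:=\Gamma/[\Gamma,\Omega]$ is central (and equals $M$, which is abelian by Lemma~\ref{lem:schur exponent 2}). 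Consequently the image of $T$ under the quotient $\Gamma\times\Gamma\to\bar\Gamma\times\bar\Gamma$ is exactly $\{(\bar r,\bar r^{-1})\mid r\in\Omega\}$, the image of the homomorphism $\bar\Omega\to\bar\Gamma\times\bar\Gamma$, $\bar r\mapsto(\bar r,\bar r^{-1})$. Since $T$ is the full preimage of this subgroup and contains the kernel $[\Gamma,\Omega]\times[\Gamma,\Omega]$, it is a subgroup.

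Next I would reduce to generators. As $[\Gamma,\Omega]$ is generated by the commutators $[\gamma,\omega]$ with $\gamma\in\Gamma$, $\omega\in\Omega$, and $\psi$ is a homomorphism, it is enough to prove $\psi([\gamma,\omega])=[\psi(\gamma),\psi(\omega)]\in T$ for all such $\gamma,\omega$. A key preliminary observation is that for $\omega\in\Omega$ one has $\psi(\omega)=(\omega_0,\omega_1)$ with trivial $C_2$-component and $\omega_0,\omega_1\in\Omega$: indeed, composing $\psi$ with the coordinatewise projection $\Gamma\to G$ recovers the wreath decomposition of $G$, under which $\omega$ maps to the identity, forcing both the active permutation and the states to be trivial in $G$.

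The argument then splits into two cases according to the $C_2$-component of $\psi(\gamma)$. If $\psi(\gamma)=(g_0,g_1)$ has trivial component, then $[\psi(\gamma),\psi(\omega)]=([g_0,\omega_0],[g_1,\omega_1])$ lies in $[\Gamma,\Omega]\times[\Gamma,\Omega]\subseteq T$. If $\psi(\gamma)=(g_0,g_1)\sigma$ with $\sigma$ the generator of $C_2$, a direct computation in $\Gamma\wr C_2$ yields
\[
[\psi(\gamma),\psi(\omega)]=(g_0\omega_1 g_0^{-1}\omega_0^{-1},\,g_1\omega_0 g_1^{-1}\omega_1^{-1}).
\]
Reducing modulo $[\Gamma,\Omega]\times[\Gamma,\Omega]$ and using centrality of $\bar\Omega$ to replace each conjugate $g_i\omega_j g_i^{-1}$ by $\omega_j$, the two coordinates become $\bar r$ and $\bar r^{-1}$ for $r=\omega_1\omega_0^{-1}\in\Omega$. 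Hence the element lies in $T$, completing both cases.

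I expect the main obstacle to be the bookkeeping in the swap case: one must carry out the commutator computation in $\Gamma\wr C_2$ correctly and then invoke centrality of $\Omega$ modulo $[\Gamma,\Omega]$ to collapse the conjugating factors, so that precisely the antidiagonal contribution $(r,r^{-1})$ survives. The structural point that makes the whole reduction legitimate is that $T$ is a subgroup, which again rests on centrality; everything else is a routine wreath-product calculation.
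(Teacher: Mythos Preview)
Your proof is correct and follows essentially the same route as the paper: both split according to whether $\psi(\gamma)$ has trivial $C_2$-component (the paper phrases this via the index-$2$ subgroup $\Sigma=\langle \beta,\gamma,\delta,\beta^a,\gamma^a,\delta^a\rangle$ and the coset representative $a$, computing $\psi([a,r])=(r_2^{-1}r_1,r_1^{-1}r_2)$ directly). Your version is a bit more explicit in first verifying that $T$ is a subgroup via centrality of $\Omega$ modulo $[\Gamma,\Omega]$, a point the paper leaves implicit.
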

\begin{proof}
We know that the map $\psi$ descends to $G$.
Therefore $\psi(\Omega)\leq\Omega\times\Omega$.
Let $\Sigma\leq\Gamma$ be the subgroup of index $2$ generated by $\{\b,\c,\d,\b^a,\c^a,\d^a\}$.
Then clearly $\psi([\Sigma,\Omega])\leq[\Gamma,\Omega]\times[\Gamma,\Omega]$.
Finally, write $\psi(r)=(r_1,r_2)$ for $r\in\Omega$.
Then $\psi([a,r])=(r_2^{-1}r_1,r_1^{-1}r_2)$, so we proved the claim.
\end{proof}

Let $\Delta$ be the subgroup $\{(\overline{r},\overline{r^{-1}})\mid r\in\Omega\}$ of $M\times M$. This is a subgroup since $M$ is Abelian.
Then the group $(M\times M)/\Delta$ is naturally isomorphic to $M$.
Thus by the above lemma, the map $\psi:\Omega\to\Omega\times\Omega$ induces a map $\Psi:M\to(M\times M)/\Delta\simeq M$.
This endomorphism can be computed as follows.
Let $\overline{r}$ be an element of $M$ and choose a lift $r\in\Omega$.
Then $\Psi(\overline{r})$ is the image of $\psi(r)\in\Omega\times\Omega$ modulo $([\Gamma,\Omega]\times[\Gamma,\Omega])\{(r,r^{-1})\mid r\in\Omega\}$.

\begin{lemma}
We have the isomorphism $M=\ker\Psi\times\Psi(M)$.
\end{lemma}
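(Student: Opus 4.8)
The plan is to prove that the endomorphism $\Psi:M\to M$ induces a direct-sum decomposition $M=\ker\Psi\times\Psi(M)$. Since $M$ is an elementary Abelian $2$-group by Lemma~\ref{lem:schur exponent 2}, it is a vector space over $\F_2$, and $\Psi$ is an $\F_2$-linear endomorphism. The claimed decomposition is exactly the statement that $\Psi$ is a (not necessarily orthogonal) projection up to isomorphism, or more precisely that $\ker\Psi$ and $\im\Psi$ are complementary subspaces. In general a linear endomorphism of a vector space does \emph{not} split its domain as $\ker\oplus\im$; this happens precisely when $\Psi$ restricts to an isomorphism on $\Psi(M)$, equivalently when $\ker\Psi\cap\Psi(M)=0$ (and then a dimension/rank count over $\F_2$, valid even in infinite dimensions via the rank-nullity splitting $M/\ker\Psi\simeq\im\Psi$, gives the internal direct product). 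So the entire content reduces to establishing the single fact $\ker\Psi\cap\Psi(M)=\{0\}$, after which one invokes the standard fact that any short exact sequence of $\F_2$-vector spaces splits.

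**Reduction to an intersection computation.**
First I would record that for any linear map $\Psi$ of $\F_2$-spaces one has $M=\ker\Psi\times\Psi(M)$ if and only if $\ker\Psi\cap\Psi(M)=0$ together with $\ker\Psi+\Psi(M)=M$. The surjectivity-of-sum condition is automatic once the intersection is trivial, by the following: choose an $\F_2$-basis of $\im\Psi$, lift it through $\Psi$, and extend a basis of $\ker\Psi$ by these lifts; the vanishing intersection guarantees these lifts are independent modulo $\ker\Psi$, and since $M/\ker\Psi\cong\im\Psi$ they span. Hence the proof collapses to showing $\Psi(x)=0$ forces $x\notin\Psi(M)$ unless $x=0$. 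Concretely, using the explicit description of $\Psi$ given just before the statement — $\Psi(\overline r)$ is the class of $\psi(r)\bmod([\Gamma,\Omega]\times[\Gamma,\Omega])\{(r,r^{-1})\}$ — the condition $\Psi(x)=\Psi(\Psi(y))$ must be analyzed.

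**The key step.**
I expect the main obstacle to be verifying that $\Psi^2$ and $\Psi$ interact so that $\ker\Psi\cap\im\Psi=0$; the cleanest route is to exhibit a basis of $M$ adapted to $\Psi$. Since the $\overline{r_i^{(n)}}$ (images of $\varphi^n(r_i^{(0)})$) generate $M$ and $\varphi$ corresponds to $x\mapsto\psi^{-1}(1,x)$ on $K$, I would compute $\Psi$ on the generators explicitly: because $\psi(\varphi(r))=(1,r)$ for $r\in\Omega$, one gets $\Psi(\overline{r_i^{(n+1)}})=\overline{r_i^{(n)}}$ modulo $\Delta$, so $\Psi$ acts as a shift $\overline{r_i^{(n+1)}}\mapsto\overline{r_i^{(n)}}$ on the grading by $n$. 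For such a graded shift the image consists of everything in degrees $\ge 0$ reachable from higher degree, while the kernel is spanned by the degree-$0$ generators $\overline{r_i^{(0)}}$ that are not themselves shifts — this is where one must check that the relations defining $M$ do not accidentally place a nonzero degree-$0$ element into $\im\Psi$. I would isolate this as the crux: prove that no nontrivial $\F_2$-combination of the $\overline{r_i^{(0)}}$ lies in $\Psi(M)$, using the explicit $\psi$-images of the four relators computed in Lemma~\ref{lem:schur exponent 2}. Granting that, $\ker\Psi\cap\im\Psi=0$ follows, and the splitting of $\F_2$-vector spaces yields $M=\ker\Psi\times\Psi(M)$.
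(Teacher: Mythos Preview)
You have misread the statement. The paper asserts only an \emph{abstract} isomorphism $M\cong\ker\Psi\times\Psi(M)$, not an internal direct-sum decomposition. Its proof is the two-line observation you yourself mention in passing: the first isomorphism theorem gives a short exact sequence $0\to\ker\Psi\to M\to\Psi(M)\to0$, and since $M$ is an elementary Abelian $2$-group (Lemma on exponent~$2$) this is a short exact sequence of $\F_2$-vector spaces, hence splits. That is the entire argument; no analysis of $\ker\Psi\cap\im\Psi$ is needed.

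More seriously, the internal direct-sum statement you set out to prove is \emph{false}, so your programme cannot succeed. By the subsequent lemmas in the paper, $\Psi(\overline{r_2^{(1)}})\equiv\overline{r_2^{(0)}}\pmod{\overline R}$, so $\Psi(\overline{r_2^{(1)}})$ lies in $N_0=\langle\overline{r_1^{(0)}},\dots,\overline{r_4^{(0)}},\overline{r_1^{(1)}}\rangle\subseteq\ker\Psi$; and it is nonzero because the paper shows $\overline{r_2^{(1)}}\notin N_0=\ker\Psi$. Thus $\ker\Psi\cap\Psi(M)\neq0$. Your ``key step'' of showing that no nontrivial combination of the $\overline{r_i^{(0)}}$ lies in $\Psi(M)$ would therefore fail. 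The lemma is really a soft statement about splitting of vector-space extensions, and the hard content you are trying to insert here (the shift behaviour of $\Psi$ and the independence of the $\overline{r_i^{(n)}}$) belongs to the later lemmas, where it is used for a different purpose.
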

\begin{proof}
We clearly have the exact sequence $1\to\ker\Psi\to M\to\Psi(M)\to1$.
Since $M$ is elementary Abelian, the extension is a direct sum.
\end{proof}

Let $\overline{R}$ be the subgroup of $M$ generated by $\overline{r_1^{(0)}}$.
The following lemma explains the behaviour of $\Psi$.

\begin{lemma}\label{lem:schur psi shift}
For all $i=1,2,3,4$ and $n>0$, the map $\Psi$ sends $\overline{r_i^{(n)}}$ to $\overline{r_i^{(n-1)}}$, modulo $\overline{R}$.
\end{lemma}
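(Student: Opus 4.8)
The goal is to understand how $\Psi$ acts on the generators $\overline{r_i^{(n)}}$ of $M$, where $r_i^{(n)} = \varphi^n(r_i^{(0)})$. The key fact is that $\varphi$ restricted to $K$ sends $x$ to $\psi^{-1}(1,x)$, i.e. $\psi(\varphi(x)) = (1,x)$. Since the relators $r_i^{(0)}$ lie in $\Omega \le \Gamma'$ and project into $K$ inside $G$, I would expect $\psi(\varphi(r)) = (1, r)$ to hold at the level of $\Gamma$ up to a controlled error in $[\Gamma,\Omega]$.

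Let me sketch the calculation. The plan is to compute $\Psi(\overline{r_i^{(n)}})$ directly from the definition. We have $r_i^{(n)} = \varphi(r_i^{(n-1)})$ for $n \ge 1$. The definition of $\Psi$ says: lift $\overline{r_i^{(n)}}$ to $r_i^{(n)} \in \Omega$, apply $\psi$ to get an element of $\Omega \times \Omega$, and reduce modulo $([\Gamma,\Omega]\times[\Gamma,\Omega])\Delta'$, where $\Delta' = \{(r,r^{-1}) \mid r\in\Omega\}$. So I need to compute $\psi(r_i^{(n)}) = \psi(\varphi(r_i^{(n-1)}))$.

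The crux is the identity $\psi(\varphi(x)) = (1,x)$ for $x \in K$, which holds in $G$ by the remark following Proposition~\ref{prop:endo}. I would need its lift to $\Gamma$: for $r \in \Omega$, we should get $\psi(\varphi(r)) \equiv (1, r)$ modulo $[\Gamma,\Omega]\times[\Gamma,\Omega]$. The reason is that the $\psi$ defined on $\Gamma$ is a genuine homomorphism lifting the one on $G$, and the ambiguity in lifting $\varphi$ to $\Gamma$ costs only elements of $\Omega$ which, after the first-coordinate collapse, land in $[\Gamma,\Omega]$. Granting this, for $n > 0$ we obtain
\begin{align*}
\psi(r_i^{(n)}) = \psi(\varphi(r_i^{(n-1)})) \equiv (1,\, r_i^{(n-1)}) \pmod{[\Gamma,\Omega]\times[\Gamma,\Omega]}.
\end{align*}
Now reduce modulo $\Delta'$: the pair $(1, r_i^{(n-1)})$ is congruent to $(r_i^{(n-1)}, 1) \cdot (r_i^{(n-1)}{}^{-1}, r_i^{(n-1)})$, and since $(r_i^{(n-1)}{}^{-1}, r_i^{(n-1)}) \in \Delta'$, it is killed. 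Thus $\Psi(\overline{r_i^{(n)}})$ equals the image of $(r_i^{(n-1)}, 1)$, which under the identification $(M\times M)/\Delta \simeq M$ via the first coordinate is exactly $\overline{r_i^{(n-1)}}$.

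The phrase ``modulo $\overline{R}$'' in the statement is where I expect the one genuine subtlety to live, and it is the step I would watch most carefully. The error term from lifting $\varphi$ and from the $\psi$-decomposition of $r_i^{(0)}$ will not vanish identically but should be expressible in terms of $\overline{r_1^{(0)}}$, which generates $\overline{R}$; this is consistent with the earlier observation that $\varphi(r_1^{(0)}) = \varphi([\d^a,\d])$ reduces to $(\c\c^a)^4$, a relation supported entirely in the $\langle a,\d\rangle$-part and hence contributing only an $r_1^{(0)}$-type correction. So the main obstacle is not the shift $n \mapsto n-1$, which is essentially formal once the lift $\psi(\varphi(r)) \equiv (1,r)$ is established, but rather bookkeeping the discrepancy caused by the nontrivial first-coordinate behaviour of $r_1^{(0)}$ under $\psi$, and verifying that every such discrepancy lands in $\overline{R}$. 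I would handle this by a careful case-by-case computation of $\psi(r_i^{(0)})$ for $i=1,2,3,4$, tracking the first coordinate modulo $[\Gamma,\Omega]$ and confirming it lies in $\overline{R}$.
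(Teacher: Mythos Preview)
Your overall strategy is right: compute $\psi(\varphi(r))$ and control its first coordinate. But the justification you give for that control is where the argument goes wrong.

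You assert that for $r\in\Omega$ one should have $\psi(\varphi(r))\equiv(1,r)$ modulo $[\Gamma,\Omega]\times[\Gamma,\Omega]$. This is too strong: if it held, the first coordinate would already vanish in $M$ and there would be no ``modulo $\overline{R}$'' in the statement at all. The first coordinate need not lie in $[\Gamma,\Omega]$. Your fallback plan --- a case-by-case computation of $\psi(r_i^{(0)})$ --- is also misdirected: those computations belong to the separate lemma showing $N_0\subseteq\ker\Psi$, not to the present one, which concerns $\psi(r_i^{(n)})$ for \emph{all} $n>0$. That is infinitely many elements, so a uniform argument is required.

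The paper's proof supplies exactly that uniform control, and it is simpler than what you sketch. From the definitions one has $\psi(\varphi(a))=(\d,a)$, $\psi(\varphi(\b))=(1,\b)$, $\psi(\varphi(\c))=(a,\c)$, $\psi(\varphi(\d))=(a,\d)$; hence for \emph{every} $x\in\Gamma$ the element $\psi(\varphi(x))$ has the form $(w,x)$ with $w$ in the infinite dihedral group $D=\langle a,\d\rangle\le\Gamma$. If in addition $x\in\Omega$, then $\varphi(x)\in\Omega$ and so $\psi(\varphi(x))\in\Omega\times\Omega$, forcing $w\in D\cap\Omega$. Since $a,\d$ generate a dihedral group of order $8$ in $G$, this intersection is exactly $\langle(a\d)^4\rangle=\langle r_1^{(0)}\rangle$. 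Thus $\psi(r_i^{(n)})=(w,r_i^{(n-1)})$ with $w$ a power of $r_1^{(0)}$, and passing to $M$ gives $\Psi(\overline{r_i^{(n)}})=\overline{r_i^{(n-1)}}$ modulo $\overline{R}$. The point you were missing is that the error lives in the concrete subgroup $D$, not in $[\Gamma,\Omega]$; intersecting $D$ with $\Omega$ is what produces $\overline{R}$.
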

\begin{proof}
Consider $r_i^{(n)}=\varphi(r_i^{(n-1)})$ with $n>0$.
Let $D\leq\Gamma$ be the infinite dihedral group generated by $a$ and $\d$.
Then $\psi(r_i^{(n)})$ equals $(1,r_i^{(n-1)})$ modulo $D\times 1$.
Moreover, since $\psi(r_i^{(n)})$ is in $\Omega\times\Omega$, we must have $\psi(r_i^{(n)})=(1,r_i^{(n-1)})$ modulo $R\times1$ with $R=D\cap\Omega=\langle r_1^{(0)}\rangle$.
This last equality holds because $a$ and $\d$ generate a dihedral group of order $8$ in $G$.
Thus we proved the equality $\Psi(\overline{r_i^{(n)}})=\overline{r_i^{(n-1)}}$ modulo $\overline{R}$.
\end{proof}

\begin{lemma}
The group $N_0\leq M$ generated by $\{\overline{r_1^{(0)}},\overline{r_2^{(0)}},\overline{r_3^{(0)}},\overline{r_4^{(0)}},\overline{r_1^{(1)}}\}$ is in the kernel of $\Psi$.
\end{lemma}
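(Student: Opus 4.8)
The plan is to show that $\Psi$ annihilates each of the five generators of $N_0$ in turn; since $\ker\Psi$ is a subgroup, this suffices. Recall that for $\overline r\in M$ with lift $r\in\Omega$ the value $\Psi(\overline r)$ is the class in $(M\times M)/\Delta$ of $\psi(r)\in\Omega\times\Omega$; in particular $\Psi(\overline r)=0$ as soon as $\psi(r)$ lies in the subgroup $([\Gamma,\Omega]\times[\Gamma,\Omega])\{(s,s^{-1})\mid s\in\Omega\}$. Thus the whole argument reduces to reading off the two wreath coordinates of each relator.

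\emph{The four base relators.} Using $\psi(a)=\sigma$, $\psi(\b)=(\c,a)$, $\psi(\c)=(a,\d)$ and $\psi(\d)=(1,\b)$, I would first record the images
\[
\psi(\d^a)=(\b,1),\quad \psi(\c^a\b)=(\d\c,1),\quad \psi((\c^a\b)^\c)=(a\d\c a,1),\quad \psi(\c\b^a)=(1,\d\c).
\]
The key observation is uniform: in each of $r_1^{(0)}=[\d^a,\d]$, $r_2^{(0)}=[\d,\c^a\b]$, $r_3^{(0)}=[\d,(\c^a\b)^\c]$ and $r_4^{(0)}=[\c^a\b,\c\b^a]$ the two entries of the commutator have $\psi$-images supported on different factors of $\Gamma\times\Gamma$, hence commute. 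Therefore $\psi(r_i^{(0)})=(1,1)$ for $i=1,2,3,4$; equivalently these relators lie in $R_1=\ker\psi$. In particular $\Psi(\overline{r_i^{(0)}})=0$ for each $i$.

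\emph{The iterated relator.} For $\overline{r_1^{(1)}}=\overline{\varphi(r_1^{(0)})}$ I would exploit the shift identity $\psi(\varphi(g))=(\theta(g),g)$, valid for every $g\in\Gamma$, where $\theta$ is the lift to $\Gamma$ of the map $a\mapsto\d$, $\b\mapsto1$, $\c\mapsto a$, $\d\mapsto a$ of Proposition~\ref{prop:endo}. This identity is read off from $\psi\varphi(a)=(\d,a)$, $\psi\varphi(\b)=(1,\b)$, $\psi\varphi(\c)=(a,\c)$ and $\psi\varphi(\d)=(a,\d)$, whose second coordinates reproduce the generators. Taking $g=r_1^{(0)}$ gives $\psi(r_1^{(1)})=(\theta(r_1^{(0)}),r_1^{(0)})$. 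Now $\theta$ restricts on the dihedral group $D=\langle a,\d\rangle$ to the automorphism exchanging $a$ and $\d$, which inverts the rotation $a\d$; since a short computation gives $r_1^{(0)}=[\d^a,\d]=(a\d)^4$ inside $D$, we obtain $\theta(r_1^{(0)})=(a\d)^{-4}=(r_1^{(0)})^{-1}$. Hence $\psi(r_1^{(1)})=((r_1^{(0)})^{-1},r_1^{(0)})$ is of the form $(s,s^{-1})$ with $s=(r_1^{(0)})^{-1}\in\Omega$, so it already lies in $\{(s,s^{-1})\mid s\in\Omega\}$ and $\Psi(\overline{r_1^{(1)}})=0$.

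\emph{Main obstacle.} The base-relator computations are routine once the wreath images are recorded, the disjoint-support observation making the four commutators collapse simultaneously. The delicate generator is $\overline{r_1^{(1)}}$: Lemma~\ref{lem:schur psi shift} only locates $\Psi(\overline{r_1^{(1)}})$ inside $\overline R$, which is nonzero in $M$, so it cannot be quoted directly. The real content is to pin down the first wreath coordinate $\theta(r_1^{(0)})$ as the \emph{exact} inverse of $r_1^{(0)}$ in $\Gamma$, so that $\psi(r_1^{(1)})$ genuinely lands in the subgroup $\{(s,s^{-1})\}$. This rests on $\langle a,\d\rangle$ being dihedral of order $8$ in $G$, which forces $R=D\cap\Omega=\langle(a\d)^4\rangle$ and fixes the power precisely.
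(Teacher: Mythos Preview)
Your proof is correct and follows essentially the same line as the paper: show $\psi(r_i^{(0)})=(1,1)$ for $i=1,\dots,4$ by direct computation, then handle $r_1^{(1)}$ by computing its wreath image and recognising it as an element of the diagonal $\Delta$. The only cosmetic difference is in the treatment of $r_1^{(1)}$: the paper records $\psi(r_1^{(1)})\equiv(r_1^{(0)},r_1^{(0)})$ modulo $[\Gamma,\Omega]\times[\Gamma,\Omega]$ and invokes the fact that $M$ is $2$-torsion, whereas you package the computation via the identity $\psi\circ\varphi=(\theta,\id)$ and obtain the sharper exact equality $\psi(r_1^{(1)})=((r_1^{(0)})^{-1},r_1^{(0)})\in\{(s,s^{-1})\}$; the two are equivalent since $(r_1^{(0)})^2=[a,r_1^{(0)}]\in[\Gamma,\Omega]$. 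One small expository quibble: your closing remark that the computation ``rests on $\langle a,\d\rangle$ being dihedral of order $8$ in $G$'' is not quite right---the identity $\theta((a\d)^4)=(\d a)^4=(a\d)^{-4}$ holds already in the infinite dihedral subgroup of $\Gamma$, independently of any relation in $G$.
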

\begin{proof}
For $i=1,2,3,4$, the element $\psi(r_i^{(0)})$ is trivial in $\Gamma\times\Gamma$ as a direct computation shows.
Another straightforward computation yields $\psi(r_1^{(1)})\equiv(r_1^{(0)},r_1^{(0)})$ modulo $[\Gamma,\Omega]\times[\Gamma,\Omega]$, and thus $r_1^{(1)}$ is also in the kernel of $\Psi$.
\end{proof}

\begin{proposition}
The elements $\overline{r_i^{(n)}}$ are all independent in $M$, and $\Psi(r_i^{(n)})$ is non-trivial unless $r_i^{(n)}$ is in $N_0$.
\end{proposition}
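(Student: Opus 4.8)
The plan is to deduce the second assertion from the first, and to prove the independence of the classes $\overline{r_i^{(n)}}$ by induction on the level $n$, using Lemma~\ref{lem:schur psi shift} to peel off one level at a time. Throughout I write $\overline{R}=\langle\overline{r_1^{(0)}}\rangle$ and recall that the four classes $\overline{r_i^{(0)}}$ and the class $\overline{r_1^{(1)}}$, i.e. the generators of $N_0$, all lie in $\ker\Psi$ by the lemma proved above.

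First I would record the easy implication. Suppose the $\overline{r_i^{(n)}}$ are independent. If $r_i^{(n)}\notin N_0$, then either $i\neq1$ and $n\geq1$, or $i=1$ and $n\geq2$; in both cases Lemma~\ref{lem:schur psi shift} gives $\Psi(\overline{r_i^{(n)}})=\overline{r_i^{(n-1)}}+\lambda\,\overline{r_1^{(0)}}$ for some $\lambda\in\F_2$. Since $\overline{r_i^{(n-1)}}$ is then a basis vector distinct from $\overline{r_1^{(0)}}$ (the levels differ, or $i\neq1$), this element is non-zero. Thus $\Psi(\overline{r_i^{(n)}})\neq0$ as claimed, and the whole statement reduces to independence.

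For the independence, the sole nonformal input is the fact that the five classes of $N_0$ are themselves $\F_2$-linearly independent in $M$. Granting this, I would argue by induction, letting $P(N)$ denote the independence of the classes $\overline{r_i^{(n)}}$ with $1\le i\le4$ and $0\le n\le N$; note $P(0)$ is part of the input. For the step, take a relation $\sum_{i,\,0\le n\le N+1}c_{i,n}\overline{r_i^{(n)}}=0$ and apply $\Psi$. Since the four level-$0$ classes lie in $\ker\Psi$ and $\Psi(\overline{r_i^{(n)}})\equiv\overline{r_i^{(n-1)}}\pmod{\overline{R}}$ for $n\ge1$, this produces a relation $\sum_{i,\,0\le m\le N}c_{i,m+1}\overline{r_i^{(m)}}+\lambda\,\overline{r_1^{(0)}}=0$ among the levels $\le N$, where $\lambda\in\F_2$ absorbs all the $\overline{R}$-corrections. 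By $P(N)$ every coefficient vanishes, forcing $c_{i,n}=0$ for $n\ge2$ and for $n=1,\,i\neq1$, while $c_{1,1}=\lambda$. Substituting back into the original relation leaves $c_{1,0}\overline{r_1^{(0)}}+c_{2,0}\overline{r_2^{(0)}}+c_{3,0}\overline{r_3^{(0)}}+c_{4,0}\overline{r_4^{(0)}}+c_{1,1}\overline{r_1^{(1)}}=0$, a relation supported entirely on $N_0$; its independence kills all remaining coefficients, establishing $P(N+1)$.

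The main obstacle is precisely this base input: that the five classes $\overline{r_1^{(0)}},\overline{r_2^{(0)}},\overline{r_3^{(0)}},\overline{r_4^{(0)}},\overline{r_1^{(1)}}$ are independent in $M=\Omega/[\Gamma,\Omega]$, equivalently that no nontrivial product of the corresponding relators lies in $[\Gamma,\Omega]$. I expect to verify this by a finite computation in a sufficiently high nilpotent quotient of $\Gamma$ that separates these five classes, carried out with NQL~\cite{hartung:nql}; everything else is formal, governed by the shift behaviour of $\Psi$ modulo $\overline{R}$. As a byproduct, feeding independence back into $\Psi(\sum c_{i,n}\overline{r_i^{(n)}})=0$ shows that the only kernel coefficients that survive are $c_{1,0},\dots,c_{4,0}$ and $c_{1,1}$, so that in fact $\ker\Psi=N_0$, which is consistent with the splitting $M=\ker\Psi\oplus\Psi(M)$ obtained above.
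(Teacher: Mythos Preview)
Your argument is correct and uses the same ingredients as the paper: the GAP/NQL verification that the five classes generating $N_0$ are independent (Lemma~\ref{lem:gap schur independence}), the shift formula $\Psi(\overline{r_i^{(n)}})\equiv\overline{r_i^{(n-1)}}\pmod{\overline R}$ from Lemma~\ref{lem:schur psi shift}, and an induction that peels off one level at a time. The only difference is organisational: the paper groups the generators into subgroups $N_n=\langle\overline{r_2^{(n)}},\overline{r_3^{(n)}},\overline{r_4^{(n)}},\overline{r_1^{(n+1)}}\rangle$ for $n>0$ and shows by induction that each $N_n$ has rank~$4$ and meets $N_0\cdots N_{n-1}$ trivially, using the iterate $\Psi^{n+1}$ to separate $N_{n+1}$ from the rest, whereas you work directly with a putative linear relation and apply $\Psi$ once per inductive step. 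Your bookkeeping is arguably cleaner, since it avoids the somewhat artificial regrouping that puts $\overline{r_1^{(n+1)}}$ together with the level-$n$ classes; the paper's packaging, on the other hand, makes the statement $\Psi(N_n)\equiv N_{n-1}\pmod{\overline R}$ uniform. Both routes yield $\ker\Psi=N_0$ as an immediate byproduct.
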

\begin{proof}
For $n>0$, let $N_n\leq M$ be the subgroup generated by $\{\overline{r_2^{(n)}},\overline{r_3^{(n)}},\overline{r_4^{(n)}},\overline{r_1^{(n+1)}}\}$.
Note that by Lemma~\ref{lem:gap schur independence}, the group $N_0$ is isomorphic to $(\Z/2\Z)^5$.

Lemma~\ref{lem:schur psi shift} shows that for all $n>0$, we have $\Psi(\overline{r_i^{(n)}})=\overline{r_i^{(n-1)}}$ modulo $\overline{R}$ for $i\in\{2,3,4\}$ and similarly $\Psi(\overline{r_1^{(n+1)}})=\overline{r_1^{(n)}}$ modulo $\overline{R}$.
Thus $\Psi(N_n)=N_{n-1}$, modulo $\overline{R}$.

We now prove by induction that for all $n>0$, the group $N_n$ has rank $4$ and trivial intersection with $N_0\cdots N_{n-1}$.
For the case $n=1$, we note the isomorphism $N_0/\overline{R}\simeq(\Z/2\Z)^4$.
Since $\Psi(N_1)=N_0$ modulo $\overline{R}$, it follows that $N_1$ has rank $4$.
And $N_1$ has trivial intersection with $N_0$ because $\Psi(N_0)=0$ and $\Psi(N_1)\simeq(\Z/2\Z)^4$.

Now suppose the induction hypothesis holds for some $n>0$.
Then $N_n\overline{R}/\overline{R}$ has rank $4$.
We have $\Psi(N_{n+1})=N_n$ modulo $\overline{R}$, and therefore $N_{n+1}$ has rank $4$.
Next $\Psi^{n+1}(N_0\cdots N_n)=0$, whereas $\Psi^{n+1}(N_{n+1})=N_0$ modulo $\overline{R}$.
This proves that $\Psi^{n+1}(N_{n+1})\simeq(\Z/2\Z)^4$ and so $N_{n+1}$ has trivial intersection with $N_0\cdots N_n$.
Therefore the $\overline{r_i^{(n)}}$ are all independent.
\end{proof}
\begin{corollary}
The kernel of $\Psi$ is $N_0\simeq(\Z/2\Z)^5$.
\end{corollary}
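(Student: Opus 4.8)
The plan is to establish the two inclusions $N_0\subseteq\ker\Psi$ and $\ker\Psi\subseteq N_0$ separately. The first, together with the isomorphism $N_0\simeq(\Z/2\Z)^5$, is already in hand from the lemma placing $N_0$ inside $\ker\Psi$ and from Lemma~\ref{lem:gap schur independence}; so only the reverse inclusion remains, and for this I would exploit the global picture furnished by the preceding Proposition. That Proposition shows the $\overline{r_i^{(n)}}$ form an $\F_2$-basis of $M$, and regrouping them gives the internal direct sum $M=N_0\oplus N_1\oplus N_2\oplus\cdots$: the five generators of $N_0$ and the four generators of each $N_n$ with $n\geq1$ list every $\overline{r_i^{(n)}}$ exactly once.

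On this decomposition I would record how $\Psi$ acts. It annihilates $N_0$, and by Lemma~\ref{lem:schur psi shift} it sends each generator of $N_n$ to the corresponding generator of $N_{n-1}$ modulo $\overline{R}$, for every $n\geq1$. Since $\overline{R}\subseteq N_0$, the map $\Psi|_{N_n}$ is the ``shift'' $N_n\to N_{n-1}$ up to a correction landing in $N_0$; for $n\geq2$ its image modulo $\overline{R}$ is all of $N_{n-1}$, and for $n=1$ it is $N_0/\overline{R}$. In either case $\Psi|_{N_n}$ is injective, because its composite with the projection onto $N_{n-1}\!\pmod{\overline{R}}$ already is.

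To prove $\ker\Psi\subseteq N_0$ I would take $x\in\ker\Psi$, write $x=\sum_{n\geq0}x_n$ with $x_n\in N_n$ (a finite sum), and expand $\Psi(x)=\sum_{n\geq1}(s_n+c_n)$, where $s_n\in N_{n-1}$ is the shift of $x_n$ and the correction $c_n\in\overline{R}\subseteq N_0$. For each $m\geq1$ the $N_m$-component of $\Psi(x)$ is therefore just $s_{m+1}$, so $\Psi(x)=0$ forces $s_{m+1}=0$ and hence, by injectivity, $x_{m+1}=0$ for all $m\geq1$. This leaves $x=x_0+x_1$, whose image is $s_1+c_1$ with $s_1$ in the span of $\overline{r_2^{(0)}},\overline{r_3^{(0)}},\overline{r_4^{(0)}},\overline{r_1^{(1)}}$ and $c_1\in\overline{R}=\langle\overline{r_1^{(0)}}\rangle$; as these two subspaces of $N_0$ meet trivially, both $s_1$ and $c_1$ vanish, whence $x_1=0$ and $x=x_0\in N_0$.

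The one place demanding care is the bookkeeping of the $\overline{R}$-corrections $c_n$: because $\Psi$ does not preserve the direct sum $\bigoplus N_n$ on the nose --- the image of $N_n$ leaks into $N_0$ through $\overline{R}$ --- one must verify that these leaks touch only the $N_0$-summand, which is exactly what confines the kernel to $N_0$ rather than something larger. Once the decomposition and the shift behaviour are isolated, the remainder is routine linear algebra over $\F_2$.
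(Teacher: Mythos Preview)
Your argument is correct and is precisely the linear algebra the paper expects the reader to supply; the Corollary is stated without proof in the paper, and the ingredients you use --- the direct-sum decomposition $M=\bigoplus_{n\ge0}N_n$, the inclusion $N_0\subseteq\ker\Psi$, and the shift behaviour $\Psi(N_n)\equiv N_{n-1}\pmod{\overline{R}}$ --- are exactly those established in the preceding Proposition and its proof. A slightly quicker packaging of the same idea: the composite $\Psi':\bigoplus_{n\ge1}N_n\xrightarrow{\Psi}M\to M/\overline{R}$ sends each generator of $N_n$ to the corresponding generator of $N_{n-1}$ (or of $N_0/\overline{R}$ when $n=1$), hence is a pure shift and therefore injective; since $\Psi(N_0)=0$, this forces $\ker\Psi=N_0$.
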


The endomorphism $\varphi$ of $\Gamma$ induces an endomorphism on $G$.
Therefore $\varphi(\Omega)\leq\Omega$ and $\varphi$ induces an endomorphism (which we still write $\varphi$) on $M$ defined by $\overline{r}\mapsto\overline{\varphi(r)}$.
Thus one can consider $M$ as a $(\Z/2\Z)[\varphi]$-module.

\begin{lemma}
The endomorphism $\varphi$ is a right inverse of $\Psi$ modulo $\overline{R}$.
\end{lemma}
\begin{proof}
Clearly $\varphi^m(\overline{r_i^{(n)}})=\overline{r_i^{(m+n)}}$.
Using Lemma~\ref{lem:schur psi shift} we deduce $(\Psi\circ\varphi)(\overline{r_i^{(n)}})=\overline{r_i^{(n)}}$ modulo $\overline{R}$, for all $i=1,2,3,4$ and $n\geq0$.
Since the $\overline{r_i^{(n)}}$ generate $M$, the claim is proved.
\end{proof}

We summarize our results in the following

\begin{theorem}\label{thm:schur}
The Schur multiplier $M(G)$ is isomorphic to a direct sum of four copies of $(\Z/2\Z)[\varphi]$.
More precisely, a set of $(\Z/2\Z)[\varphi]$-independent generators of $M(G)$ is $\{\overline{r_1^{(0)}},\overline{r_2^{(0)}},\overline{r_3^{(0)}},\overline{r_4^{(0)}}\}$.
\end{theorem}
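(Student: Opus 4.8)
The plan is to read off the free-module structure directly from the $\F_2$-linear data already assembled, since by this point the substantive work is finished. First I would recall from Lemma~\ref{lem:schur exponent 2} that $M$ is an elementary Abelian $2$-group, hence an $\F_2$-vector space, and that the endomorphism $\varphi$ introduced just above the theorem makes it a module over the polynomial ring $(\Z/2\Z)[\varphi]$. The images $\overline{r_i^{(n)}}$ span $M$ over $\F_2$: the $r_i^{(n)}$ normally generate $\Omega$ in $\Gamma$, and since the conjugation action of $\Gamma$ becomes trivial in the quotient $M=\Omega/[\Gamma,\Omega]$, the $\Gamma$-conjugates of each $r_i^{(n)}$ all share a single image.

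Next I would invoke the identity $\varphi^n(\overline{r_i^{(0)}})=\overline{r_i^{(n)}}$ to recognise this spanning set as exactly the family of $\varphi$-translates of the four distinguished elements $\overline{r_1^{(0)}},\dots,\overline{r_4^{(0)}}$. Hence $\{\overline{r_i^{(0)}}:i=1,2,3,4\}$ generates $M$ as a $(\Z/2\Z)[\varphi]$-module, and the natural candidate map sends the $i$-th standard generator of $\bigoplus_{i=1}^4(\Z/2\Z)[\varphi]$ to $\overline{r_i^{(0)}}$. For a free $(\Z/2\Z)[\varphi]$-module of rank $4$ the set $\{\varphi^n e_i\}$ is an $\F_2$-basis, so proving this map an isomorphism reduces to showing that the $\overline{r_i^{(n)}}$ are not merely spanning but $\F_2$-linearly independent.

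That independence is precisely the content of the Proposition immediately preceding this theorem. Concretely, an arbitrary $(\Z/2\Z)[\varphi]$-relation $\sum_{i}p_i(\varphi)\,\overline{r_i^{(0)}}=0$ expands, via $\varphi^n(\overline{r_i^{(0)}})=\overline{r_i^{(n)}}$, into a finite $\F_2$-linear combination of the $\overline{r_i^{(n)}}$; their independence forces every coefficient, and thus every $p_i$, to vanish. Together with the spanning property this gives the isomorphism $M(G)\simeq\bigoplus_{i=1}^4(\Z/2\Z)[\varphi]$ with the stated basis $\{\overline{r_i^{(0)}}\}$.

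I expect no real obstacle in this final bookkeeping; the heart of the matter is the independence Proposition itself, which rests on the shift behaviour $\Psi(\overline{r_i^{(n)}})=\overline{r_i^{(n-1)}}$ modulo $\overline{R}$ from Lemma~\ref{lem:schur psi shift} and on the base-case computation $N_0\simeq(\Z/2\Z)^5$ supplied by the GAP verification in Lemma~\ref{lem:gap schur independence}. Granting those inputs, Theorem~\ref{thm:schur} is merely the translation of $\F_2$-independence into $(\Z/2\Z)[\varphi]$-freeness.
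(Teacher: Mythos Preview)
Your proposal is correct and matches the paper's approach: the paper states Theorem~\ref{thm:schur} as a summary of the preceding results (``We summarize our results in the following'') with no separate proof, and your writeup simply spells out the bookkeeping that turns the $\F_2$-spanning (from the $r_i^{(n)}$ being normal generators of $\Omega$) and the $\F_2$-independence (from the Proposition) into $(\Z/2\Z)[\varphi]$-freeness via the identity $\varphi^n(\overline{r_i^{(0)}})=\overline{r_i^{(n)}}$.
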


\begin{remark}
For the Grigorchuk group, the situation is slightly more subtle.
First, one proves that the Schur multiplier is elementary Abelian ``by hand'', as in Lemma~\ref{lem:schur exponent 2}.

Then, as auxiliary group, one uses $\Gamma=\langle a,b,c,d\mid a^2,b^2,c^2,d^2,bcd\rangle\simeq(\Z/2\Z)*(\Z/2\Z)^2$.
The reason for this is that otherwise $bcd$ would be fixed by $\Psi$.
The Schur multiplier of $\Gamma$ is $\Z/2\Z$, and using the five-term homology sequence we get the extension $1\to\Z/2\Z\to M(G)\to\Omega/[\Gamma,\Omega]\to1$.
The rest of the proof is absolutely similar to the above, and one finally gets $M(G)\simeq\Z/2\Z\times((\Z/2\Z)[\varphi])^2$, which one could pedantically write as $M(G)\simeq((\Z/2\Z)[\varphi]/(\varphi-1))\times((\Z/2\Z)[\varphi])^2$, see~\cite{grigorchuk:bath}.
\end{remark}

\section{GAP Computations}
\begin{lemma}\label{lem:KB}
The L-presentations
\begin{enumerate}
\item\label{item:i}
$\langle a,\b,\c,\d\mid\tilde\varphi\mid a^2,\b^2,\c^2,\d^2,[\d^a,\d],[\d,\c^a\b],[\d,(\c^a\b)^\c],[\c^a\b,\c\b^a]
\rangle$ and
\item\label{item:ii}
$\langle a,\b,\c,\d\mid\tilde\varphi\mid a^2,\b^2,\c^2,\d^2,[\d^a,\d^w],[\d^a,(\c\b^a)^w],[\c\b^a,(\c\b^a)^w]\rangle$,\\
with
$w\in\{1,\c,\c^a,\c\c^a,\c^a\c,\c\c^a\c,\c^a\c\c^a,\c\c^a\c\c^a\}$
\end{enumerate}
define the same group.
\end{lemma}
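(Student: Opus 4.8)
The plan is to show that the two L-presentations~\eqref{item:i} and~\eqref{item:ii} define the same group by exhibiting mutual containment between their normal closures. Write $\Phi$ for the monoid generated by $\tilde\varphi$, and for a set of words $R$ write $R^{\Phi}=\{\phi(r):\phi\in\Phi,\ r\in R\}$ for the full set of relators obtained by applying all iterates of $\tilde\varphi$. Both presentations share the four relators $a^2,\b^2,\c^2,\d^2$, so it suffices to work modulo the free product $\Gamma=\langle a,\b,\c,\d\mid a^2,\b^2,\c^2,\d^2\rangle$ and compare the normal closures in $\Gamma$ of the two remaining relator families together with all their $\tilde\varphi$-iterates. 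Call these normal closures $\Omega_1$ and $\Omega_2$. Since the proof of Theorem~\ref{thm:presentation} already established (via~\eqref{eq:pres}) that presentation~\eqref{item:i} follows from presentation~\eqref{item:ii} after the reduction to eight values of $w$, the genuinely new content is one of the two inclusions; the other is a finite unwinding of that earlier argument.

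The key steps, in order, are as follows. First I would fix the generating relators: for~\eqref{item:i} these are $\{[\d^a,\d],[\d,\c^a\b],[\d,(\c^a\b)^\c],[\c^a\b,\c\b^a]\}$, and for~\eqref{item:ii} the fifteen words $[\d^a,\d^w],[\d^a,(\c\b^a)^w],[\c\b^a,(\c\b^a)^w]$ with $w$ in the stated eight-element set. Second, I would observe that each relator of~\eqref{item:i} lies visibly in $\Omega_2$: indeed $[\d^a,\d]$ is the $w=1$ case of the first family, and the commutators involving $\c^a\b$ are $\Gamma$-conjugates (by suitable powers of $a$) of the corresponding $\c\b^a$-relators already present in~\eqref{item:ii}, so that $\Omega_1\subseteq\Omega_2$ is immediate once one records the conjugating elements. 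Third, for the reverse inclusion $\Omega_2\subseteq\Omega_1$, I would invoke the Knuth--Bendix procedure in KBMAG~\cite{holt:kbmag}: feeding it the relators of~\eqref{item:i} (closed under enough iterates of $\tilde\varphi$ to reach the needed words) produces a confluent rewriting system in which each of the fifteen relators of~\eqref{item:ii} reduces to the empty word. Because $\tilde\varphi$ is fixed, only finitely many iterates are required to express each relator of~\eqref{item:ii}, so this is a terminating finite computation.

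The main obstacle is the reverse inclusion $\Omega_2\subseteq\Omega_1$, since the relators with $w\ne 1$ are long and their reduction is not achievable by inspection. This is precisely where a machine verification is indispensable: the Knuth--Bendix completion must be run with the iterates $\tilde\varphi^n(R)$ of~\eqref{item:i} supplied up to a bound $n$ large enough that every word appearing in the eight chosen values of $w$ can be rewritten to the identity, and one must check that the procedure halts with confluence on the relevant finite subsystem rather than looping. Once confluence is confirmed and all fifteen relators of~\eqref{item:ii} normalize to $1$, the equality $\Omega_1=\Omega_2$ follows, and hence the two L-presentations define the same group.
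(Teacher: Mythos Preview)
Your plan follows the same route as the paper: the inclusion $\Omega_1\subseteq\Omega_2$ is asserted to be easy, and the reverse inclusion is delegated to a Knuth--Bendix computation in KBMAG over finitely many $\tilde\varphi$-iterates of the relators of~\eqref{item:i}. Two points need correction.

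First, a minor one: your claim that every relator of~\eqref{item:i} is a conjugate by a power of $a$ of a relator listed in~\eqref{item:ii} fails for $[\c^a\b,\c\b^a]$. Since $\c^a\b=(\c\b^a)^a$, this commutator (up to inversion) is $[\c\b^a,(\c\b^a)^a]$, which would require $w=a$; but $a$ is not among the eight listed conjugators. The paper does not argue this inclusion syntactically. By the time Lemma~\ref{lem:KB} is invoked, presentation~\eqref{item:ii} has already been identified with $G$ via~\eqref{eq:pres}, so $\Omega_1\subseteq\Omega_2$ reduces to the fact that the four commutators of~\eqref{item:i} vanish in $G$, which is immediate from the wreath recursion (they all lie in $R_1$).

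Second, and this is the real gap: you require the Knuth--Bendix procedure to ``halt with confluence''. In the paper's actual computation it does \emph{not} terminate --- the call returns \texttt{false} and the resulting system is explicitly non-confluent. The crucial observation your plan misses is that confluence is unnecessary here. Every rewriting rule produced is a valid consequence of the input relators, so if a specific word happens to reduce to the identity under the partial system, that word already lies in $\Omega_1$. One only needs enough rules to kill the finitely many relators of~\eqref{item:ii}; the paper finds that feeding in $\tilde\varphi^n(R)$ for $0\le n\le 2$ and letting KBMAG generate a few thousand rules suffices. If you insist on waiting for confluence before checking the reductions, your computation will never finish.
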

\begin{proof}
Clearly, all the relations in~\eqref{item:i} are relations in~\eqref{item:ii}.
Therefore it is enough to check that the normal closure of the relations in~\eqref{item:i} together with some of their iterates under $\tilde\varphi$ contains all the relations of~\eqref{item:ii}.
This is done via the following commands using GAP, together with the package KBMAG~\cite{holt:kbmag}:
\begin{verbatim}
gap> f := FreeGroup("a","b","c","d");;
gap> AssignGeneratorVariables(f);
#I  Assigned the global variables [ a, b, c, d ]
gap> frels := [a^2,b^2,c^2,d^2];;
gap> conjs := [One(f),c,c*c^a,c*c^a*c,c*c^a*c*c^a,c^a*c*c^a,c^a*c,c^a];;
gap> irels := Flat(List(conjs, x -> [Comm(d^a,d^x), Comm(d^a,(c*b^a)^x),
> Comm(c^a*b,(c*b^a)^x)]));;
gap> endos:=[GroupHomomorphismByImages(f,f,[a,b,c,d],[c^a,d,b^a,c])];;
gap> k := 2;;
gap> selectedrels := [Comm(d^a,d), Comm(d^a,c*b^a),
> Comm(c^a*b,c*b^a), Comm(d,(c^a*b)^c)];;
gap> g := f / Concatenation(frels, Flat(List([0..k],
> i -> List(selectedrels, r -> r^(endos[1]^i)))));;
gap> rws := KBMAGRewritingSystem(g);;
gap> KnuthBendix(rws);
false
gap> Set(List(irels, x -> ReducedWord(rws, x))) = [One(f)];
#WARNING: system is not confluent, so reductions may not be to normal form.
true
\end{verbatim}
We do not require the Knuth-Bendix procedure to terminate.
Indeed, the command \texttt{KnuthBendix} stops automatically after a few seconds, once it has computed a few thousand new rules.
At this point, the rewriting system already contains enough rules to prove that all the words of \texttt{irels} are congruent to the identity.
\end{proof}

Let $\gamma_n$ be the $n$-th term of the lower central series of $G$.
\begin{lemma}\label{lem:lcs GAP}
The inclusion $[\gamma_3,\{\b,\c,\d\}]\subseteq\gamma_5$ and $[\gamma_4,a]\subseteq\gamma_6$ hold.
\end{lemma}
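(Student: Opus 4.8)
The plan is to recognize both inclusions as statements that can be checked in a single finite nilpotent quotient of $G$, and then to verify them there by a direct commutator computation. Since $\gamma_5$ and $\gamma_6$ are verbal subgroups, the inclusion $[\gamma_3,\{\b,\c,\d\}]\subseteq\gamma_5$ is equivalent to the vanishing of the images of $[\gamma_3,\b]$, $[\gamma_3,\c]$ and $[\gamma_3,\d]$ in $G/\gamma_5$, and $[\gamma_4,a]\subseteq\gamma_6$ is equivalent to the vanishing of the image of $[\gamma_4,a]$ in $G/\gamma_6$. Both quotients are finite $2$-groups, so everything reduces to a computation in a finitely presented nilpotent group. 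The asymmetry between the two parts mirrors that of Lemma~\ref{lem:lcs1}: here $\b,\c,\d$ lie in $\stab_G(1)$ while $a$ does not, which is why the generators are paired with $\gamma_3$ and $a$ with $\gamma_4$.

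First I would build the quotient $G/\gamma_6$ (nilpotency class $5$) directly from the L-presentation of Theorem~\ref{thm:presentation}, using the nilpotent-quotient algorithm for L-presented groups implemented in NQL~\cite{hartung:nql}. This produces a consistent polycyclic presentation of $G/\gamma_6$ together with explicit generating sets for each lower central factor $\gamma_n/\gamma_{n+1}$ with $n\le5$; the quotient $G/\gamma_5$ is then obtained by further collapsing $\gamma_5$. This is the step that carries all the arithmetic weight, and it is where the specific relators of $G$ enter: in a free nilpotent group neither inclusion holds, so the assertion is genuinely a property of $G$ rather than a formal consequence of nilpotency.

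Next I would reduce the verification to generators. As observed in the proof of Lemma~\ref{lem:lcs1}, all the factors $\gamma_n/\gamma_{n+1}$ are elementary abelian $2$-groups, and the commutator induces a biadditive map $\gamma_n/\gamma_{n+1}\times G/\gamma_2\to\gamma_{n+1}/\gamma_{n+2}$. Consequently, to prove $[x,s]\in\gamma_5$ for every $x\in\gamma_3$ it suffices to check it for $x$ ranging over a generating set of $\gamma_3$ modulo $\gamma_4$, since any $[x,s]$ with $x\in\gamma_4$ already lies in $\gamma_5$; and similarly $[\gamma_4,a]\subseteq\gamma_6$ reduces to commuting a generating set of $\gamma_4$ modulo $\gamma_5$ with $a$. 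Both generating sets are returned by the nilpotent-quotient computation.

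Finally I would form these finitely many commutators in the polycyclic presentation of $G/\gamma_6$ and check that each reduces to the identity modulo $\gamma_5$ (respectively $\gamma_6$). The only real obstacle is conceptual rather than computational: one must work in a quotient of sufficiently high class, namely $G/\gamma_6$ rather than $G/\gamma_5$, so that membership in $\gamma_6$ is actually detectable, and one must rely on the correctness of the nilpotent-quotient computation, which is precisely what turns the finitely many finite-group checks into a rigorous proof.
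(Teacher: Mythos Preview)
Your proposal is correct and follows essentially the same route as the paper: reduce both inclusions to finite checks in the nilpotent quotient $G/\gamma_6$, build that quotient from the L-presentation via NQL, and verify on generators that the relevant commutators lie in $\gamma_5$ (respectively $\gamma_6$). The paper's proof is precisely this GAP computation, with no additional ideas.
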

\begin{proof}
It is enough to work in $G/\gamma_6$, and then to check:
\begin{verbatim}
gap> G := AsLpGroup(GrigorchukEvilTwin);;
gap> pi := NqEpimorphismNilpotentQuotient(G, 5);;
gap> g := Image(pi);;
gap> a := G.1^pi;; b := G.2^pi;; c := G.3^pi;; d := G.4^pi;;
gap> lcs := LowerCentralSeries(g);;
gap> ForAll(GeneratorsOfGroup(lcs[3]), x ->
> ForAll([b,c,d], y -> Comm(x,y) in lcs[5]));
true
gap> ForAll(GeneratorsOfGroup(lcs[4]), x -> Comm(x,a) in lcs[6]);
true
\end{verbatim}
\end{proof}

\begin{lemma}\label{lem:gamma5}
We have the equality $\gamma_5=\gamma_3\times\gamma_3$.
\end{lemma}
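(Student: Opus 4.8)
The goal is to prove $\gamma_5 = \gamma_3 \times \gamma_3$, i.e. $\bar\gamma_5 = \gamma_5$ in the notation of the preceding section. This is the base case of the induction in the Proof of Proposition~\ref{prop:lcs}, so everything rests on verifying it directly. The plan is to reduce the statement to a finite computation in a nilpotent quotient of $G$, exactly as in Lemma~\ref{lem:lcs GAP}, and discharge it with NQL~\cite{hartung:nql}.

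First I would establish the inclusion $\bar\gamma_5 = \gamma_3\times\gamma_3 \subseteq \gamma_5$. Since $\bar\gamma_5$ is a subgroup of $G$ (because $K \supseteq \gamma_3$ by Proposition~\ref{prop:K contains gamma_3}), and since Lemma~\ref{lem:lcs1} and Lemma~\ref{lem:lcs2} give $[\bar\gamma_5,G]\subseteq\bar\gamma_6$ and $\bar\gamma_6\subseteq[\bar\gamma_5,G]$ once the equality $\bar\gamma_k=\gamma_k$ is known in low degrees, the content is genuinely the identification at level $5$ and not at higher levels. So the one thing that does not follow formally is the single equality $\gamma_5 = \gamma_3\times\gamma_3$ itself, and I expect this to be checked computationally.

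The mechanism is as follows. By Proposition~\ref{prop:lcs}'s own logic we only need $\bar\gamma_5=\gamma_5$; both $\gamma_5$ and $\gamma_3\times\gamma_3$ contain $\gamma_7 \supseteq \gamma_5' \cdots$, so the comparison can be carried out modulo a sufficiently high term of the lower central series, which is a finite $2$-group. Concretely I would pass to $G/\gamma_6$ (or $G/\gamma_7$ to be safe), compute its lower central series, and check that the image of $\gamma_5$ coincides with the image of the subgroup generated by $\{0*s, \Delta*s\}$-type elements coming from generators of $\gamma_3$; equivalently, that $\gamma_5$ and $\gamma_3\times\gamma_3$ have the same image in the finite quotient. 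Since both contain $\gamma_6$ by construction, equality of images in $G/\gamma_6$ forces the equality $\gamma_5 = \gamma_3\times\gamma_3$. This is precisely the kind of verification that NQL performs: build the nilpotent quotient, locate $\gamma_5$ in its lower central series, and confirm the two subgroups agree.

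The main obstacle is not conceptual but bookkeeping: one must correctly express $\gamma_3\times\gamma_3$ inside $G$ via the wreath decomposition $\psi$, i.e. produce explicit elements of $G$ whose images under $\psi$ generate $\gamma_3\times\gamma_3$, and confirm these lie in $\gamma_5$ while conversely a generating set of $\gamma_5$ projects into $\gamma_3\times\gamma_3$. Getting the generators of $\gamma_3$ pulled back correctly through $\psi$ (respecting that $\psi$ maps into $G\wr\sym(X)$) is where a hand computation would be tedious, which is why the GAP/NQL route is preferable. I would therefore present the proof as a short reduction to a finite $2$-group followed by the NQL command sequence, mirroring Lemma~\ref{lem:lcs GAP}, and note that it suffices to verify the equality in $G/\gamma_6$.
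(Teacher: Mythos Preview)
Your reduction contains a genuine gap. You write ``since both contain $\gamma_6$ by construction, equality of images in $G/\gamma_6$ forces the equality $\gamma_5=\gamma_3\times\gamma_3$.'' But only $\gamma_5\supseteq\gamma_6$ is automatic; the containment $\gamma_3\times\gamma_3\supseteq\gamma_6$ is \emph{not} known a priori --- it is essentially equivalent to the inclusion $\gamma_5\subseteq\gamma_3\times\gamma_3$ you are trying to prove. Coincidence of images in $G/\gamma_6$ (or in any $G/\gamma_n$) yields only $\gamma_5=(\gamma_3\times\gamma_3)\gamma_6$, and iterating with Lemma~\ref{lem:lcs1} gives $\gamma_5\subseteq(\gamma_3\times\gamma_3)\gamma_n$ for all $n$, which does not close up to $\gamma_5\subseteq\gamma_3\times\gamma_3$ without knowing that $\gamma_3\times\gamma_3$ is closed in the lower-central-series topology. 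So working in a fixed nilpotent quotient of $G$ handles the inclusion $\gamma_3\times\gamma_3\subseteq\gamma_5$ (your idea is correct there), but not the reverse one.

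The paper's device for the missing inclusion is to form an $L$-presentation of the quotient $G/(\gamma_3\times\gamma_3)$ itself and compute \emph{its} nilpotency class. The key observation is that the endomorphism $\varphi$ restricts on $K$ to $x\mapsto\psi^{-1}(1,x)$; since $\gamma_3\le K$ (Proposition~\ref{prop:K contains gamma_3}), one has $\varphi(\gamma_3)=1\times\gamma_3$, whose normal closure in $G$ is $\gamma_3\times\gamma_3$. Thus, lifting a normal generating set $S$ of $\gamma_3$ to $\tilde S\subset F$ and adding $\tilde\varphi(\tilde S)$ to the relators of the $L$-presentation of $G$ gives an $L$-presentation of $G/(\gamma_3\times\gamma_3)$; NQL then certifies that this group has nilpotency class at most $4$, i.e.\ $\gamma_5\subseteq\gamma_3\times\gamma_3$. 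The other inclusion is exactly your check that the images of $\tilde\varphi(\tilde S)$ vanish in $G/\gamma_5$. In short: use $\varphi$, not $\psi$, to produce normal generators of $\gamma_3\times\gamma_3$ inside the $L$-presentation framework, and compute the nilpotency class of the \emph{quotient by} $\gamma_3\times\gamma_3$, rather than comparing images inside a nilpotent quotient of $G$.
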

\begin{proof}
Let $G=F/\langle R_\infty\rangle^F$ be a presentation of $G$, where $R$ is a finite set, and $R_\infty=\bigcup_{n\geq0}\tilde\varphi^n(R)$ (see Theorem~\ref{thm:presentation}).
We compute a lift $\tilde S$ in $F$ of a set $S$ of normal generators of $\gamma_3$.
By Proposition~\ref{prop:K contains gamma_3}, $\gamma_3$ is a subgroup of $K$, and therefore $\varphi(\gamma_3)=1\times\gamma_3$.
Thus $\gamma_3\times\gamma_3$ is the normal closure of $\varphi(\gamma_3)$, and $\varphi(S)$ is a set of normal generators of $\gamma_3\times\gamma_3$.
Therefore $F/\langle R_\infty\cup\tilde\varphi(\tilde S)\rangle^F$ is a presentation of $G/(\gamma_3\times\gamma_3)$.
This observation allows us to verify that the group $G/(\gamma_3\times\gamma_3)$ has nilpotency class (at most) $4$:
\begin{verbatim}
gap> LoadPackage("fr");;
gap> G := AsLpGroup(GrigorchukEvilTwin);;
gap> pi3 := NqEpimorphismNilpotentQuotient(G, 2);;
gap> iso := IsomorphismFpGroup(Image(pi3));;
gap> iso2 := IsomorphismSimplifiedFpGroup(Image(iso));;
gap> N3 := Image(iso2);;
gap> GeneratorsOfGroup(N3) = List(GeneratorsOfGroup(G), x -> x^(pi3*iso*iso2));
true
gap> f := FreeGroupOfFpGroup(N3);;
gap> Stilde := RelatorsOfFpGroup(N3);;
gap> phi := GroupHomomorphismByImages(f, f,
> [f.1,f.2,f.3,f.4], [f.3^f.1,f.4,f.2^f.1,f.3]);;
gap> R := [f.1^2,f.2^2,f.3^2,f.4^2,Comm(f.4^f.1,f.4),Comm(f.4,f.3^f.1*f.2),
> Comm(f.4,(f.3^f.1*f.2)^f.3),Comm(f.3^f.1*f.2,f.3*f.2^f.1)];;
gap> g := LPresentedGroup(f, [], [phi],
> Concatenation(R, List(Stilde, s -> s^phi)));;
gap> NilpotencyClassOfGroup(NilpotentQuotient(g));
4
\end{verbatim}
The attentive reader will notice that we compute $G/(\gamma_3\times\gamma_3)\simeq F/\langle (R\cup\tilde\varphi(\tilde S))_\infty\rangle^F$, but the equality $\langle(R\cup\tilde\varphi(\tilde S))_\infty\rangle^F=\langle R_\infty\cup\tilde\varphi(\tilde S)\rangle^F$ holds because $\varphi(\gamma_3)$ is a subgroup of $\gamma_3$, and thus $\tilde\varphi(\tilde\varphi(\tilde S))$ is contained in $\langle R_\infty\cup\tilde\varphi(\tilde S)\rangle^F$.

Further, we can now easily prove the inclusion $\gamma_5\geq\gamma_3\times\gamma_3$.
We simply need to check
\begin{verbatim}
gap> pi5 := NqEpimorphismNilpotentQuotient(G, 4);;
gap> pi := GroupHomomorphismByImages(f, Image(pi5),
> [f.1,f.2,f.3,f.4], [G.1^pi5,G.2^pi5,G.3^pi5,G.4^pi5]);;
gap> Set(List(R, r -> r^(phi*pi))) = [One(Image(pi5))];
true
\end{verbatim}
so the image of $\tilde\varphi(\tilde S)$ is trivial in $G/\gamma_5$.
\end{proof}

\begin{lemma}\label{lem:explicit lcs}
The rank of $\gamma_n/\gamma_{n+1}$ is $4,4,4,2$ for $n=1,2,3,4$ respectively.
Moreover, minimal generating sets are given in the following table:
\begin{center}
\begin{tabular}{|c|l|}
\hline
$n$ & generating set for $\gamma_n/\gamma_{n+1}$\\
\hline
1 & $a,\b,\c,\d$\\
2 & $[a,\b],[a,\c],[a,\d],[\b,\c]$\\
3 & $[[a,\b],a],[[a,\b],\c],[[a,\b],\d],[[a,\c],a]$\\
4 & $[[[a,\b],\c],a],[[[a,\b],\d],a]$\\
\hline
\end{tabular}
\end{center}
\end{lemma}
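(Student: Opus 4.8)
The plan is to reduce the whole statement to a single finite computation inside the class-$4$ nilpotent quotient $G/\gamma_5$. Since $G$ is generated by the involutions $a,\b,\c,\d$, each layer $\gamma_n/\gamma_{n+1}$ is an elementary Abelian $2$-group, as already observed in the proof of Lemma~\ref{lem:lcs1}; hence it is an $\F_2$-vector space, ``rank'' means $\F_2$-dimension, and to prove the second assertion it suffices to exhibit an $\F_2$-basis of each layer for $n\le4$. All of this data is visible in $G/\gamma_5$, which is a finite $2$-group.

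First I would feed the L-presentation of Theorem~\ref{thm:presentation} to the nilpotent-quotient package NQL~\cite{hartung:nql}, which is built precisely to cope with the infinite relator family $\tilde\varphi^n(R)$. This returns $G/\gamma_5$ together with a weighted polycyclic presentation that exposes the lower central filtration, and from the abelian invariants of the successive factors one reads off the ranks $4,4,4,2$. The rank of the abelianization can moreover be checked by hand: in $G^{ab}$ every relator of the L-presentation becomes either trivial (the commutators $[\d^a,\d],\dots$ together with all their $\tilde\varphi$-iterates) or twice a generator (the squares $a^2,\b^2,\c^2,\d^2$ and their iterates), so $G^{ab}\cong(\Z/2\Z)^4$ with basis $a,\b,\c,\d$.

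To certify the explicit generating sets I would compute, for each $n\in\{1,2,3,4\}$, the images in $G/\gamma_5$ of the commutators listed in the $n$-th row and verify that they span $\gamma_n/\gamma_{n+1}$. As each row contains exactly as many elements as the rank found above, spanning is equivalent to $\F_2$-linear independence, so the generating set is automatically minimal. A welcome consistency check is provided by Proposition~\ref{prop:lcs} and Corollary~\ref{cor:lcs}: the layers of ranks $4$ and $2$ for $n=3,4$ are precisely the seeds from which the periodic rank pattern $4,4,4,2,\dots$ is generated by the operations $0*(\cdot)$ and $\Delta*(\cdot)$ — so this lemma furnishes the base case, and cannot itself be bootstrapped from the recursion.

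The one point that genuinely needs justification is the legitimacy of truncating at class $4$: one must know that only finitely many of the iterated relators $\tilde\varphi^n(R)$ can affect the bottom four layers. This rests on the fact that iterating $\tilde\varphi$ pushes relators to ever higher lower-central weight — the same doubling phenomenon, $\varphi(\gamma_n)\subseteq\gamma_{2n-1}$ for $n\ge3$, that underlies Lemma~\ref{lem:gamma5} — so that for a fixed nilpotency class only boundedly many iterates contribute. This weight behaviour is exactly what makes NQL efficient on such L-presentations, and once it is granted the argument reduces to a routine verification of spanning sets in the finite $2$-group $G/\gamma_5$.
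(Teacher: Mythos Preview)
Your approach is essentially identical to the paper's: the paper's proof is a GAP transcript that loads the L-presentation into NQL, computes the class-$4$ nilpotent quotient $G/\gamma_5$, reads off the abelian invariants of the successive lower-central factors, and then checks that each listed set, together with $\gamma_{n+1}$, generates $\gamma_n$. Your added commentary on why the truncation at class~$4$ is legitimate and the by-hand check of $G^{\mathrm{ab}}$ are sound and welcome, but the paper simply delegates all of this to the NQL implementation.
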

\begin{proof}
We compute in $G/\gamma_5$:
\begin{verbatim}
gap> G := AsLpGroup(GrigorchukEvilTwin);;
gap> pi := NqEpimorphismNilpotentQuotient(G, 4);;
gap> a := G.1^pi;; b := G.2^pi;; c := G.3^pi;; d := G.4^pi;;
gap> g := Image(pi);;
gap> lcs := LowerCentralSeries(g);;
gap> List([2..Length(lcs)], i -> AbelianInvariants(lcs[i-1]/lcs[i]));
[ [ 2, 2, 2, 2 ], [ 2, 2, 2, 2 ], [ 2, 2, 2, 2 ], [ 2, 2 ] ]
gap> s := [[a,b,c,d],
> [Comm(a,b),Comm(a,c),Comm(a,d),Comm(b,c)],
> [LeftNormedComm([a,b,a]),LeftNormedComm([a,b,c]),
>  LeftNormedComm([a,b,d]),LeftNormedComm([a,c,a])],
> [LeftNormedComm([a,b,c,a]),LeftNormedComm([a,b,d,a])]];;
gap> ForAll([1..4], i -> lcs[i] = ClosureGroup(lcs[i+1],s[i]));
true
\end{verbatim}
\end{proof}

For the following lemma the notations are as in Section~\ref{sec:Schur}.
\begin{lemma}\label{lem:gap schur independence}
The elements $\{\overline{r_1^{(0)}},\overline{r_2^{(0)}},\overline{r_3^{(0)}},\overline{r_4^{(0)}},\overline{r_1^{(1)}}\}$ generate a group isomorphic to $(\Z/2\Z)^5$ in the group $\Omega/([\Gamma,\Omega]\varphi^2(\Omega)(\gamma_8(\Gamma)\cap\Omega))$.
\end{lemma}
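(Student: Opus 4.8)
The plan is to realise the quotient $Q=\Omega/([\Gamma,\Omega]\varphi^2(\Omega)(\gamma_8(\Gamma)\cap\Omega))$ as a quotient of the finite $2$-group $\Gamma/\gamma_8(\Gamma)$ and to verify the claimed independence by a finite computation there. First I would note that $\varphi$, being an endomorphism of $\Gamma$, preserves each term of the lower central series, so it descends to an endomorphism $\bar\varphi$ of the finite nilpotent group $\bar\Gamma=\Gamma/\gamma_8(\Gamma)$. Since $\gamma_8(\Gamma)\cap\Omega$ is exactly the kernel of the restriction of $\Gamma\to\bar\Gamma$ to $\Omega$, the quotient $Q$ is canonically isomorphic to $\bar\Omega/\bar L$, where $\bar\Omega$ is the image of $\Omega$ in $\bar\Gamma$ and $\bar L=[\bar\Gamma,\bar\Omega]\cdot\bar\varphi^2(\bar\Omega)$. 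By Lemma~\ref{lem:schur exponent 2} the group $Q$ is elementary Abelian, so it suffices to check that the five classes $\overline{r_1^{(0)}},\dots,\overline{r_4^{(0)}},\overline{r_1^{(1)}}$ span a subspace of dimension $5$ over $\F_2$.

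The second step is to make $\bar\Omega$ effectively computable. As a subgroup of $\Gamma$ the kernel $\Omega$ is the normal closure of the infinite family $\{r_i^{(n)}\}$; but $\bar\varphi$ is an endomorphism of the finite group $\bar\Gamma$, so the normal closures $\langle\bar\varphi^n(\overline{r_i^{(0)}}):i,\,0\le n\le m\rangle^{\bar\Gamma}$ are non-decreasing in $m$ and stabilise after finitely many steps; I would compute $\bar\Omega$ as this stabilised closure. Because we then quotient by $\bar\varphi^2(\bar\Omega)$, the images of all $r_i^{(n)}$ with $n\ge2$ are killed, so $Q$ is generated by the eight classes $\overline{r_i^{(0)}},\overline{r_i^{(1)}}$ for $i=1,2,3,4$, among which we single out the five of the statement.

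The third step is the finite computation itself, carried out with NQL and GAP in the style of the surrounding lemmas: build $\bar\Gamma=\Gamma/\gamma_8(\Gamma)$ via the nilpotent quotient algorithm, install $\bar\varphi$, form $\bar\Omega$, then $\bar L=[\bar\Gamma,\bar\Omega]\,\bar\varphi^2(\bar\Omega)$, pass to $Q=\bar\Omega/\bar L$, and read off that the images of $r_1^{(0)},r_2^{(0)},r_3^{(0)},r_4^{(0)},r_1^{(1)}$ are $\F_2$-linearly independent. Since $Q$ is a quotient of $M=\Omega/[\Gamma,\Omega]$, their independence in $Q$ forces their independence in $M$; as $N_0$ is generated by exactly these five classes, this yields $N_0\simeq(\Z/2\Z)^5$.

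The main obstacle is the second step: justifying rigorously that truncating the infinite normal generating set of $\Omega$ to finitely many $\bar\varphi$-iterates faithfully reproduces $\bar\Omega$. Because $\varphi$ merely permutes the generators of $\Gamma$ up to conjugacy, it does \emph{not} raise the lower central weight, so one cannot argue that $r_i^{(n)}\in\gamma_8(\Gamma)$ for large $n$; the finiteness comes solely from $\bar\varphi$ being an endomorphism of a finite group, and one must confirm computationally that the normal closure has genuinely stabilised — that one further application of $\bar\varphi$ enlarges neither $\bar\Omega$ nor the quotient $Q$ — before reading off the rank. Granting this, the remaining verification is routine linear algebra over $\F_2$.
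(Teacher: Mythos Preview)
Your proposal is correct and follows essentially the same strategy as the paper: reduce the independence claim to a finite check in the class-$7$ nilpotent quotient. The only real difference is organisational, and it is worth a short comparison because it bears precisely on what you flag as the ``main obstacle''.

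The paper sidesteps the stabilisation issue altogether by encoding $\Gamma/\bigl([\Gamma,\Omega]\,\varphi^2(\Omega)\bigr)$ \emph{directly} as an $L$-presented group: it takes $\varphi$ as the iterating endomorphism and, as iterated relators, the words $a^2,\b^2,\c^2,\d^2$, the elements $\varphi^2(r_i^{(0)})$, and the commutators $[s,r_i^{(j)}]$ for $s\in\{a,\b,\c,\d\}$ and $j\in\{0,1\}$. Feeding this $L$-presentation to \texttt{NqEpimorphismNilpotentQuotient} delegates the infinite iteration of $\varphi$ to NQL, which handles it internally while building the class-$7$ quotient; one then simply checks that the images of the five elements generate a group of structure $(\Z/2\Z)^5$. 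Your route instead computes $\bar\Gamma=\Gamma/\gamma_8(\Gamma)$ first and imposes the $\Omega$-relations afterwards, with an explicit stabilisation check. That is equally valid, and your justification is sound: the ascending chain of normal closures in the finite group $\bar\Gamma$ must stabilise, and once $N_m=N_{m+1}$ one has $\bar\varphi(N_m)\subseteq N_m$, so the chain is constant thereafter. What the paper's packaging buys is that the stabilisation is absorbed into the $L$-presentation formalism already used throughout; what yours buys is that every step is visible in the ambient finite $2$-group $\bar\Gamma$.
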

\begin{proof}
We first define the group \verb+gam+ $=\Gamma/([\Gamma,\Omega]\varphi^2(\Omega))$ as an $L$-presented group using the package NQL~\cite{hartung:nql}.
\begin{verbatim}
gap> f := FreeGroup("a","b","c","d");;
gap> AssignGeneratorVariables(f);;
#I  Assigned the global variables [ a, b, c, d ]
gap> phi := GroupHomomorphismByImages(f, f, [a,b,c,d], [c^a,d,b^a,c]);;
gap> r := [a^2, b^2, c^2, d^2];;
gap> rels := [Comm(d,d^a),Comm(d,c^a*b),Comm(d,(c^a*b)^c),Comm(c^a*b,c*b^a)];;
gap> rels2 := Union(rels, Image(phi, rels));;
gap> r := Union(r, Image(phi^2, rels), ListX([a,b,c,d], rels2, Comm));;
gap> gam := LPresentedGroup(f, [], [phi], r);
<L-presented group on the generators [ a, b, c, d ]>
\end{verbatim}
Then we kill the eighth term of the lower central series of \verb+gam+, and look at the group generated by the images of $\{\overline{r_1^{(0)}},\overline{r_2^{(0)}},\overline{r_3^{(0)}},\overline{r_4^{(0)}},\overline{r_1^{(1)}}\}$.
\begin{verbatim}
gap> AssignGeneratorVariables(gam);;
#I  Assigned the global variables [ a, b, c, d ]
gap> rels := [Comm(d,d^a),Comm(d,c^a*b),Comm(d,(c^a*b)^c),Comm(c^a*b,c*b^a),
> Comm(c,c^(c^a))];;
gap> pi := NqEpimorphismNilpotentQuotient(gam, 7);;
gap> relspi := List(rels, x -> x^pi);;
gap> StructureDescription(Group(relspi));
"C2 x C2 x C2 x C2 x C2"
\end{verbatim}
This is $(\Z/2\Z)^5$, as we wanted to prove.
\end{proof}

\begin{bibdiv}
\begin{biblist}
\bibselect{bartholdi,math}
\bib{siegenthaler:phd}{thesis}{
  author={Olivier Siegenthaler},
  title={Discrete and Profinite Groups Acting on Regular Rooted Trees},
  note={PhD thesis, in preparation} 
}
\end{biblist}
\end{bibdiv}
\end{document}